\documentclass[12pt]{amsart}
\usepackage{amssymb}
\usepackage{amsmath}
\usepackage{mathabx}
\usepackage{graphicx}
\usepackage{subfig}
\usepackage[latin1]{inputenc}
\usepackage{enumerate}
\usepackage{color}
\usepackage{url}
\def\toitself{\circlearrowleft}
\def\ti{\tilde}

\def\N{{\mathbb N}}\def\Z{{\mathbb Z}}\def\Q{{\mathbb Q}}\def\T{{\mathbb T}}\def\R{{\mathbb R}}\def\C{{\mathbb C}}
\def\cA{\mathcal{A}}
\def\cE{\mathcal{E}}

\def\cU{\mathcal{U}}

\def\cP{\mathcal{P}}

\def\bS{\mathbb{S}}

\def\be{\begin{equation}}
\def\ee{\end{equation}}

\def\ba{{\begin{align}}}
\def\ea{{\end{align}}}

\def\bm{\begin{pmatrix}}
\def\em{\end{pmatrix}}

\def\a{\alpha}
\def\ph{\varphi}\def\w{\omega}\def\e{\eta}

\def\d{\delta}\def\ep{\varepsilon}
\def\l{\lambda}
\def\th{\theta}

\def\dist{\mathrm{dist}}
\def\supp{\mathrm{supp}}
\def\spec{\mathrm{spec}}
\def\interior{\mathrm{interior}}
\def\tr{\mathrm{tr}}
\def\pa{\partial}

\def\<{\langle}
\def\>{\rangle}

\theoremstyle{plain}
\newtheorem{Thm}{Theorem}[section]

\newtheorem{theo}{Theorem}[section]

\newtheorem{prop}[Thm]{Proposition}

\newtheorem{lem}[Thm]{Lemma}

\newtheorem{cor}[Thm]{Corollary}

\newtheorem{rem}{Remark}

\newtheorem{Main}{Theorem}

\renewcommand{\ti}{\widetilde}

\begin{document}

\title[Coexistence of ac and pp spectrum]{Coexistence of ac and pp spectrum for kicked quasi-periodic potentials}

\author{K. Bjerkl\"ov and R. Krikorian}

\address{
Department of Mathematics, KTH Royal Institute of Technology, Lindstedtsvägen 25, SE-100 44, Stockholm, Sweden
}

\email{ bjerklov@kth.se}

\address{
Department of Mathematics, CNRS UMR 8088,
Universit\'{e} de Cergy-Pontoise,  2, av. Adolphe Chauvin F-95302 Cergy-Pontoise, France } 
\email{raphael.krikorian@u-cergy.fr}

%\date{\today}

\begin{abstract} We introduce a class of  real  analytic ``peaky''  potentials for which the corresponding quasi-periodic 1D-Schr\"odinger operators exhibit, for quasiperiodic frequencies in a set of positive Lebesgue measure,  both absolutely continuous  and pure point  spectrum. 
\end{abstract}

\thanks{RK is supported by a Chaire d'Excellence LABEX MME-DII and the project ANR BEKAM : ANR-15-CE40-0001.}

\maketitle

\section{Introduction and Results}
\subsection{Quasiperiodic Schr\"odinger operators}
For  $x\in\T:=\R/\Z$, $\a\in (\R\setminus\Q)/\Z$
 and $V:\R/\Z\to\R$  a $C^k$ function, $k\in\N\cup\{\infty,\omega\}$, we consider the following 1D {\it quasiperiodic Schr\"odinger  operator}  
\begin{align*}H_{V,\a,x}&:l^2(\Z)\toitself\\
(u_{n})_{n\in\Z}&\mapsto (u_{n+1}+u_{n-1}+V(x+n\a)u_{n})_{n\in\Z}.
\end{align*}
We shall call  $V$ the {\it  potential},  $\a$ the {\it frequency} and $x$ the {\it phase}. 
The {\it spectrum } $\spec(H_{V,\a,x})$ of the operator $H_{V,\a,x}$ is a nonempty, compact subset of $\R$ and  
 when $\a$ is irrational  it does not  depend on $x$; we shall often denote it by $\Sigma_{V,\a}$. To investigate the spectral properties of the operator $H_{V,\a,x}$ it is often useful to introduce its {\it  spectral measure} $\mu_{V,\a,x}$; this is  the probability measure on $\R$ defined as follows: if $\delta_{i}\in l^2(\Z)$, $i=0,1$ are defined by $\delta_{i}(n)=\delta_{i,n}$, one sets  $\mu_{V,\a,x}=(1/2)(\mu_{V,\a,x,\delta_{0}}+\mu_{V,\a,x,\delta_{1}})$ where the probability measures  $\mu_{V,\a,x,\delta_{i}}$ $i=0,1$ are uniquely defined by 
 $$\forall\ z\in\C,\ \Im z>0,\ \<(H_{V,\a,x}-z)^{-1}\delta_{i},\delta_{i}\>=\int_{\R}\frac{d\mu_{V,\a,x,\delta_{i}}(t)}{t-z}.
 $$
An important question in the theory of quasi-periodic Schr\"odinger operators is to investigate the topological nature of the spectrum $\Sigma_{V,\a}$ (is it a Cantor set?)  and the spectral type of the spectral measure $\mu_{V,\a,x}$ (does it have absolutely continuous, singular continuous or atomic components?). In this paper we shall be interested on the possible coexistence of absolutely continuous (pure ac) and pure point (pp)  components of the spectrum in the following sense: 
 there exist disjoint nonempty  open  intervals $I_{ac}$ and $I_{pp}$ such that $\mu_{V,\a,x}{| I_{ac}}$ is absolutely continuous  (w.r.t. to Lebesgue measure) and $\mu_{V,\a,x}{| I_{pp}}$ is purely punctual. When this is the case we shall say for short that the spectrum of $H_{V,\a,x}$ has {\it disjoint ac and pp components}.

In the preceding questions, the regularity and the size of the potential $V$ on the one hand  and the arithmetic properties of the frequency $\a$ on the other hand  play an important role. Let us mention two general results. Assume that $V$ is of the form $V=\lambda v$ where $v:\T\to \R$ is a fixed  real analytic function  and $\lambda\in\R_{+}$:

\smallskip\noindent {\it Small real analytic potentials, Eliasson's Theorem \cite{El-cmp}}: When $\a$ is {\it diophantine}:
\begin{equation}
 \forall\ (k,l)\in\Z\times\N^*,\ |\a-\frac{k}{l}|\geq \frac{\eta}{l^{\sigma}}\tag{$DC_{1}(\eta,\sigma)$}\label{DC}
\end{equation}
and when $\lambda$ is  small enough (the smallness depending on the preceding diophantine condition), Eliasson has proved that the spectrum of $H_{\l v,\a,x}$ is absolutely continuous. 
His results extends to the case of small {\it smooth} potentials $V=\l v$ and to the multifrequency case ($v:\T^d\to\R$). In the one-frequency case ($d=1$) and when $v$ is real analytic, Bourgain and Jitomirskaya \cite{BoJito} proved that the required smallness  condition on $\lambda$ could be chosen {\it independently} of the diophantine condition.

\smallskip\noindent{\it Large real analytic potentials, Bourgain-Goldstein Theorem \cite{BG}} (see also \cite{El-acta}):   Bourgain and Goldstein proved in \cite{BG} that there exists $\lambda_{0}(v)>0$ and a set of full Lebesgue measure (depending on $v$) such that for any frequency $\a$ in this set and any $\lambda>\lambda_{0}(v)$ the spectrum of $H_{\l v,\a,0}$ is pure point. Moreover, $H_{\lambda v,\a,0}$ satisfies {\it Anderson Localization}: all the eigenvalues of $H_{\lambda v,\a,0}$ decay exponentially fast.  The analytic regularity of $v$ is here essential. If $V=\lambda v$ is only smooth and $\a$ is diophantine one can only establish the existence of {\it some} point spectrum (cf. \cite{Si},   \cite{FSW}, \cite{Su}, \cite{Bj-etds}). 

A fundamental example in the preceding setting  is the case $v=2\cos(2\pi\cdot)$ (one then speak of $H_{\lambda v,\a,x}$ as the {\it Almost Mathieu Operator} (AMO)) where the transition between ``small'' and ``large'' potentials occurs at $\lambda=1$ (\cite{J}). Notice that in this case Anderson Localization of $H_{\lambda v,\a,0}$ for $\lambda>1$ holds for any diophantine frequency $\a$. 

A natural question is to provide examples of analytic potentials and frequencies for which  the spectrum of the corresponding qp Schr\"odinger operator has both absolutely continuous (ac) and purely punctual (pp) spectral components. Natural candidates for such  potentials are perturbations of  the AMO at the critical coupling $\l=1$. Indeed, Avila \cite{A-Acta} constructed examples of potentials which are real analytic  perturbations of $2\cos(2\pi\cdot)$ and for which the spectrum of the corresponding Schr\"odinger operator has both ac and pp components; in fact these ac and pp components can be produced in many alternating intervals on the real axis. Previous examples of  quasi-periodic potentials  with two frequencies where constructed by Bourgain \cite{Bo}. For other types of coexistence results, we refer to \cite{FK} (co-existence of ac and singular spectrum),  \cite{Bj-gafa} (coexistence of regions of the spectrum with  positive Lyapunov exponents and zero Lyapunov exponents) and  \cite{Zh} (which elaborates on   \cite{Bj-gafa} to give examples of coexistence of ac and pp spectrum and coexistence of ac and sc spectrum).

\subsection{Main results}
The aim of this paper is to present  another type of potentials for which coexistence of ac and pp spectrum holds. These potentials are not particularly big (they cannot be too small by Eliasson's Theorem \cite{El-cmp}) but have a ``peaky'' shape. The sets of admissible frequencies for which our Theorems hold have positive Lebesgue measures and are  located close to rational numbers.

More precisely, we define the set $\cP^\infty$ of smooth ``peaky'' potentials $V\in C^\infty(\T,\R)$ by: $V\in\cP^\infty$ if and only if
\begin{enumerate}
\item $V\geq 0$
\item the support $\supp(V)$ of $V$ is a proper subset of $\T$
\item $V$ has a unique maximum at some point $x_{*}$
\item and for any $x\in \interior(\supp(V))\setminus\{x_{*}\}$ one has $V'(x)\ne 0$. 
\end{enumerate}
For $V\in\cP^\infty$ we denote by $L(V)$ the length of the support of $V$ and $K(V)=\max_{\T} V$.
For $\ep>0$ and $k\in\N$ we define $\cP^\omega(V;k,\ep)$ as the set of {\it real analytic} potentials $\ti V:\T\to\R$ such that 
$$ \|V-\ti V\|_{C^k}:= \max_{0\leq l\leq k}\max_{x\in\T}(|\pa^l(V(x)-\ti V(x))|)<\ep.
$$

We now describe the set which will essentially be our set of admissible frequencies. We  define for $\eta>0$, $p/q\in\Q$  the set $D_{p/q}(\eta)$
$$D_{p/q}(\eta)=]\frac{p}{q}-\eta,\frac{p}{q}+\eta[\cap DC_{1}(\eta^2,3).$$
where $DC_{1}(\eta^2,3)$ is  the set of $\a\in\T$ such that 
$$\forall\ (k,l)\in\Z\times\N^*,\ |\a-\frac{k}{l}|\geq \frac{\eta^2}{l^{3}}.$$
The set $D_{p/q}(\eta)$ is a set of positive Lebesgue  measure for $\eta$ small enough, {\it cf.} Lemma \ref{lem:posmeas}. 

Our main Theorem is the following.

\begin{Main}\label{MainA}There exists $s_{0}\in\N^*$ such that the following holds. Let $V\in\cP^\infty$ and $q\in \N^*$ be such that $K(V)>10$ and $L(V)<1/q$. Then, there exists $\ep>0$ such that for any $\ti V\in\cP^\omega(V;s_{0},\ep)$ there exists a  set  of frequencies $\cA_{q}(\ti V)$ of full Lebesgue measure in $\bigcup_{p=0}^{q-1}D_{p/q}(\eta)$ such that for any $\a\in\cA_{q}(\ti V)$ the spectrum of $H_{\ti V,\a,0}$ has  disjoint  a.c. and p.p. components. 
\end{Main}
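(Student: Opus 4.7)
\emph{Plan.}
The strategy is to exploit the two defining features of the admissible class---closeness of $\a$ to the rational $p/q$ and narrowness of $\supp(\ti V)$ (of length less than $1/q$)---by passing to the $q$-step cocycle. Writing $\a = p/q+\b$ with $|\b|<\eta$, and using that $\{jp/q \bmod 1 : 0\le j\le q-1\}$ is the equispaced set $\{k/q\}_{k=0}^{q-1}$, the orbit $\{x+j\a : 0\le j\le q-1\}$ is a small perturbation of an equispaced set of $q$ points. Since $L(V)<1/q$ and $\ti V$ is sufficiently close to $V$, for each $x$ outside an exceptional set of small measure at most one orbit point $y(x)=x+n(x)\a$ lies in $\supp(\ti V)$. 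Consequently, the $q$-step Schr\"odinger transfer matrix factors as the ``kicked'' product
\[
M_q(x,E) = B_0(E)^{q-1-n(x)}\,A_{\ti V,E}(y(x))\,B_0(E)^{n(x)},
\]
where $B_0(E)=\begin{pmatrix} E & -1 \\ 1 & 0 \end{pmatrix}$ is the free transfer matrix. One then studies the renormalized cocycle over the slow base rotation $x\mapsto x+q\b$ on $\T$, whose Diophantine property is inherited from $\a\in DC_1(\eta^{2},3)$.

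For the absolutely continuous component I would work on an open interval $I_{ac}\subset(-2,2)$ of energies, away from certain resonances, where $B_0(E)=U(E)R_{\th(E)}U(E)^{-1}$ with $E=2\cos\th(E)$. In this elliptic frame $M_q$ becomes the rigid rotation $R_{(q-1)\th(E)}$ twisted by a single localized kick of amplitude $\sim\ti V(y(x))$ at the phase determined by $n(x)$. Although the kick is not small in norm, it is supported in a set of $x$-measure $O(1/q)$ and, after conjugation into the elliptic frame, has Fourier coefficients in the slow variable that decay rapidly, so that a Bourgain--Jitomirskaya / Eliasson-type KAM reducibility scheme---with small divisors controlled by the Diophantine property of $q\b$---reduces the renormalized cocycle to a constant rotation for $\b$ in a positive-measure subset of $D_{p/q}(\eta)$. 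Reducibility on such a positive-measure set yields absolutely continuous spectrum on $I_{ac}$ by the standard spectral consequences of conjugation to rotations.

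For the pure point component I would work on a disjoint open interval $I_{pp}$ located near $K(V)>10$, so that $B_0(E)$ is uniformly hyperbolic while the kick matrix $A_{\ti V,E}(y(x))$ remains uniformly bounded in $x$. The $q$-step cocycle is then uniformly hyperbolic, and its Lyapunov exponent $L_q(E)$ is strictly positive throughout $I_{pp}$. Analyticity of $\ti V$ then permits invoking the Bourgain--Goldstein machinery---large deviation estimates for $L_q$ combined with the elimination of resonant frequencies---applied to the analytic $q$-step cocycle, producing Anderson localization and therefore pure point spectrum on $I_{pp}$ for a full-measure subset $\cA_q(\ti V)$ of $\bigcup_{p=0}^{q-1}D_{p/q}(\eta)$. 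The main obstacle is the KAM step on $I_{ac}$: the kick has norm comparable to $K(V)$ and is not directly amenable to classical small-perturbation analysis, so the subtlety is to prove that its rarity and localization translate, after conjugation, into a genuinely small perturbation in the relevant analytic/$C^{s_{0}}$ norm and to control the resonances between $\th(E)$, $q\b$ and the kick phase $n(x)\th(E)$. The regularity exponent $s_{0}$ and the closeness $\ep$ appearing in the statement are precisely what is needed for this KAM iteration to converge.
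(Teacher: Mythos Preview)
Your proposal has genuine gaps in both halves.

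\textbf{The p.p.\ part.} Your claim that for $E$ near $K(V)$ ``the $q$-step cocycle is then uniformly hyperbolic'' is false, and if it were true there would be nothing to prove: uniform hyperbolicity of $(\a,S_{E-\ti V})$ is equivalent to $E\notin\Sigma_{\ti V,\a}$, so a uniformly hyperbolic interval carries no spectrum at all, p.p.\ or otherwise. What actually happens for $E\in[10,K(V)-10]$ is that the trace of the $q$-step matrix, computed explicitly in the paper as $-V(\tilde x)\frac{\sinh(q\theta)}{\sinh\theta}+2\cosh(q\theta)$ for $E=2\cosh\theta$, crosses through $[-2,2]$ as $\tilde x$ ranges over the support of $V$ (roughly when $V(\tilde x)\approx e^{\theta}\approx E$). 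So the $q$-step cocycle is of \emph{mixed type}, not uniformly hyperbolic. The paper establishes positivity of the Lyapunov exponent for such cocycles by a different mechanism: it shows the mixed-type cocycle is \emph{regular} (its eigenvalues extend holomorphically off the real axis with one of modulus $>1$), then uses convexity of $\nu\mapsto LE(\a,A(\cdot+i\nu))$ together with the fact that the complexified cocycle is uniformly hyperbolic for $\nu>0$ to push the positive exponent down to $\nu=0$. Only after this does Bourgain--Goldstein apply.

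\textbf{The a.c.\ part.} You correctly identify the obstacle---the kick has norm of order $K(V)>10$ and is not a small perturbation---but you do not resolve it, and the mechanism you suggest (small support $\Rightarrow$ rapid Fourier decay $\Rightarrow$ KAM applies) does not work as stated: a bump of height $K(V)$ and width $L(V)$ has $C^{s_0}$ norm of order $K(V)L(V)^{-s_0}$, which is enormous. The paper's device (the ``Cheap Trick'') is different in kind. One first works \emph{exactly} at $\a=p/q$: the explicit trace formula shows that for suitable $E\in(-2,2)$ the $q$-step cocycle $(p/q,S_{E-V})^q$ is \emph{totally elliptic}, hence (by a periodic-approximation lemma) smoothly conjugate to a rotation-valued cocycle $(p/q,R_{\varphi(\cdot)})$. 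The kick is thus absorbed into the conjugacy, not treated as a perturbation. Only then does one move $\a$ away from $p/q$: the resulting cocycle differs from $(p/q,R_{\varphi(\cdot)})$ by a term of size $|\a-p/q|<\eta$, and \emph{this} is the small parameter that feeds into Eliasson's KAM scheme. The constant $s_0$ governs the $C^{s_0}$ control needed in that scheme, and $\ep$ ensures $\ti V$ is close enough to $V$ for the totally-elliptic and regular-mixed-type properties of the $q$-step cocycle to persist.
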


\begin{rem}The preceding Theorem combined with the acritality result of   Avila \cite{A-Acta} shows  that there is a prevalent set $\ti\cU\subset \cP^\omega(V;s_{0},\ep)$ such that for any   $\ti V\in\ti\cU$ and any $\a\in \cA_{q}(\ti V)$, $H_{V,\a,0}$ has no s.c. spectrum.
\end{rem}
\begin{rem} Let $\cP^{rat}(V;s_{0},\ep)$ be the subset of all functions of the form $P/Q$ in $\cP^\omega(V;s_{0},\ep)$ where $P$ and $Q$ are  trigonometric polynomials. A simple Fubini type argument shows that there exists a  set $\cA_{q}^{rat}$  of full Lebesgue measure in $\bigcup_{p=0}^{q-1}D_{p/q}(\eta)$ such that for all $(\a,\ti V)\in \cA_{q}^{rat}\times \cP^{rat}(V;s_{0},\ep)$, the spectrum of $H_{\ti V,\a,0}$ has  disjoint  a.c. and p.p. components.
\end{rem}

We can also prove  a similar result for a specific class of  examples.

Let $\lambda$ and $K$ be two positive constants and let $V_{K,\lambda}:\R/\Z\to\R$ be the real analytic potential defined by 
$$
V_{K,\lambda}(x)=\frac{K}{1+4\lambda\sin^{2}(\pi x)}.
$$

\begin{Main}\label{MainC}Assume that $K$ and $\l$ are  large enough. Then, there is a set  $\cA_{2}\subset\T$ of positive Lebesgue measure such that  for any $\a\in \cA_{2}$  the operator $H_{V_{K,\l},\a,0}$ has both a.c. and p.p. spectrum.
\end{Main}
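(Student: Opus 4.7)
The plan is to deduce Theorem \ref{MainC} from the \emph{proof} of Theorem \ref{MainA} (with $q=2$), rather than from its statement as a black box. The potential $V_{K,\lambda}$ is globally analytic and strictly positive, hence not literally a member of $\cP^\infty$; nevertheless, for $K$ and $\lambda$ large it is effectively peaky: it attains a unique global maximum $K$ at $x=0$, is strictly monotone on each half-period, and decays rapidly away from the peak.

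First, introduce a peaky model. Fix a cutoff $\chi\in C^\infty(\T,[0,1])$ supported in $[-1/5,1/5]$, equal to $1$ on $[-1/8,1/8]$, with $\chi'>0$ on $(-1/5,-1/8)$ and $\chi'<0$ on $(1/8,1/5)$. Set $V:=\chi\cdot V_{K,\lambda}$. Then $V\in\cP^\infty$ with $K(V)=K>10$ and $L(V)\le 2/5<1/2$, so the hypotheses of Theorem \ref{MainA} are satisfied with $q=2$; strict monotonicity of $V$ on either side of $x=0$ follows from that of $V_{K,\lambda}$ (one has $V'_{K,\lambda}(x)=-4\pi K\lambda\sin(2\pi x)/(1+4\lambda\sin^2(\pi x))^2$) combined with that of $\chi$.

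Next, control the analytic error. Since $1-\chi$ vanishes on $[-1/8,1/8]$, it is enough to bound $V_{K,\lambda}$ and its derivatives on $|x|\ge 1/8$. For such $x$ one has $\sin^2(\pi x)\ge\sin^2(\pi/8)>0$, so $V_{K,\lambda}$ extends holomorphically to a disk of fixed radius centered at $x$ on which it is bounded by $O(K/\lambda)$; Cauchy estimates then give $|\pa^l V_{K,\lambda}(x)|\le C_l K/\lambda$ uniformly in such $x$. Hence $\|V_{K,\lambda}-V\|_{C^{s_0}}=\|(1-\chi)V_{K,\lambda}\|_{C^{s_0}}=O_{s_0}(K/\lambda)$, which tends to $0$ as $\lambda\to\infty$ for fixed $K$.

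The final step is to invoke Theorem \ref{MainA} with the model $V$ above and $\ti V=V_{K,\lambda}$. This is also the main obstacle: since $V$ itself depends on $\lambda$, the threshold $\epsilon(V)$ furnished by Theorem \ref{MainA} could a priori degenerate as $\lambda\to\infty$. What must be checked by inspection of the proof of Theorem \ref{MainA} is that the smallness demanded of $\|\ti V-V\|_{C^{s_0}}$ depends on $V$ only through $K(V)$, $L(V)$, and a few robust shape parameters of the peak (e.g. a lower bound on $|V'|$ away from the maximum), all of which either admit uniform control in $\lambda$ or degrade at a rate compensated by the $O(K/\lambda)$ bound above. Once this uniform version is in hand, for $\lambda$ sufficiently large one has $V_{K,\lambda}\in\cP^\omega(V;s_0,\epsilon)$, and Theorem \ref{MainA} with $q=2$ delivers a positive-measure set $\cA_2\subset D_{0}(\eta)\cup D_{1/2}(\eta)$ on which $H_{V_{K,\lambda},\alpha,0}$ exhibits coexisting a.c. and p.p. spectral components.
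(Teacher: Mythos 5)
Your plan takes a genuinely different route from the paper's, and the obstacle you yourself flag is a real gap that your argument does not close. The paper proves Theorem~\ref{MainC} directly, not as a corollary of Theorem~\ref{MainA}. For the a.c.\ component it applies Theorem~\ref{theo:mainac} to $V_{K,\lambda}$ itself (no truncation) at $p/q=1/2$, with the explicit computations that $\tr(S_E(x+\alpha)S_E(x))\in[-2+K^{-2},\,2-K^{-2}]$ for all $x$, for $E\in[-3/(2K),-1/K]$ and $\alpha\in[1/4,3/4]$, and that $\rho(\alpha,S_{E-V_{K,\lambda}})\to\frac{1}{2\pi}\arccos(E/2)$ (hence is non-constant in $E$) as $\lambda\to\infty$. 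For the p.p.\ component it does \emph{not} invoke Theorem~\ref{theo:5.1}: it exploits the rational structure $V_{K,\lambda}(x)=f(e^{2\pi ix})$ with $f(z)=K/(1+\lambda(2-z-z^{-1}))$, writes $S_{E-V_{K,\lambda}}=(z-z_0)^{-1}M_E(z)$ with $M_E$ holomorphic in $\{|z|<z_1\}$, and uses Jensen's formula (Herman's trick) to obtain $LE(\alpha,S_{E-V_{K,\lambda}})\ge\log|\mu(E)|>0$ for \emph{every} irrational $\alpha$ and every $|E|>2$, after which Bourgain--Goldstein applies.

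Concerning the gap: the threshold $\epsilon$ in Theorem~\ref{MainA} is not uniform in the data you would need to fix. It is inherited from $\epsilon_0(\max_E\|A_E\|_{C^{s_0}},q)$ in Theorem~\ref{theo:mainac} and from $\eta_2(A,h,q)$ in Theorem~\ref{theo:5.1}. For your model $V=\chi V_{K,\lambda}$, the rescaling $y=\sqrt{\lambda}\,x$ near the peak gives $\|V\|_{C^{s_0}}\sim K\lambda^{s_0/2}$ with $s_0\ge 3$; simultaneously the natural analyticity band of $V_{K,\lambda}$ shrinks like $\lambda^{-1/2}$ (the poles $z_0,z_1\approx 1\mp\lambda^{-1/2}$ approach the unit circle), so $h^{-1}$ blows up. KAM-type smallness thresholds decay at least polynomially in $\|A\|_{C^{s_0}}$ and in $h^{-1}$, so one should expect $\epsilon(V)\ll\lambda^{-1}$ for large $\lambda$, which is \emph{not} beaten by $\|V_{K,\lambda}-V\|_{C^{s_0}}=O(K/\lambda)$. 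Your hope that these degenerations are controlled by a few robust shape parameters or compensated by the truncation error is unjustified: the dominant dependence is on the $C^{s_0}$ norm and the analyticity width, both of which degrade with $\lambda$. Since the paper provides no quantitative version of Theorem~\ref{MainA} that would let you verify the required inequality, the deduction cannot be completed as stated; the paper's direct proof of Theorem~\ref{MainC} is built precisely to sidestep this issue.
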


\subsection{Dynamics of cocycles and spectral theory}
The proofs of the preceding  Theorems are based, as is now often the case in the study of 1D quasi-periodic Schr\"odinger operators,  on the study of the so-called family of {\it Schr\"odinger cocycles} associated to the operator. This is the family of skew-product diffeomorphisms $(\a,S_{E-V}):\T\times\R^2\toitself$, $(\a,A)(x,y)=(x+\a,S_{E-V}(x)y)$ where $E\in\R$ and  $S_{W}$ denotes the matrix $\bm W& -1\\ 1&0\em\in SL(2,\R)$. The {\it dynamical} behavior of the family of  cocycles $(\a,S_{E-V})$, $E\in\R$ is closely related to the {\it spectral} properties of the family of operators $H_{V,\a,x}$, $x\in\T$. One usually distinguishes two different kinds of dynamical behaviors:

\smallskip\noindent{\it Reducible or almost-reducible dynamics}: the cocycle $(\a,S_{E-V})$ is $C^k$-{\it reducible}, $k\in\N\cup\{\infty,\omega\}$ if there exists $B:\R/2\Z\to SL(2,\R)$ of class $C^k$ such that $B(\cdot+\a)S_{E-V}(\cdot)B(\cdot)^{-1}$ is a constant $C\in SL(2,\R)$ or equivalently if $(0,B)\circ (\a,S_{E-V})\circ (0,B)^{-1}=(\a,C)$.  This notion is particularly useful when $C$ is an elliptic matrix because then the iterates $(\a,S_{E-V})^n=:(n\a,S_{E-V}^{(n)})$ are bounded in the sense that the products ($n\geq 1$)
$$S_{E-V}^{(n)}(\cdot)=S_{E-V}(\cdot+(n-1)\a)\cdots S_{E-V}(\cdot)$$
are uniformly $C^k$- bounded: indeed, $S_{E-V}^{(n)}(\cdot)=B(\cdot+n\alpha)^{-1}C^nB(\cdot)$. An equally important notion is that of {\it almost-reducibility}: the cocycle $(\a,S_{E-V})$ is {\it almost-reducible} if there exists a sequence $B_{n}:\R/2\Z\to SL(2,\R)$ of class $C^k$ such that $(0,B_{n})\circ (\a,S_{E-V})\circ (0,B_{n})^{-1}$ converges to a constant (elliptic or parabolic) cocycle\footnote{When $k=\omega$ we require the convergence to hold on some fixed analyticity band $(\R+i[-h,h])/\Z$ (where $i=\sqrt{-1}$).}. In many cases, almost-reducibility of  $(\a,S_{E-V})$  implies  slow growth of the iterates $(\a,S_{E-V})^n$. In turn, the boundedness (or the slow growth) of the iterates of the cocycles $(\a,S_{E-V})$ provides useful spectral informations: if this occurs at a point $E$ in the spectrum then, assuming $\a$ diophantine, there exists an interval containing $E$ on which the spectrum of  $H_{V,\a,x}$ is ac (for all $x\in\T$); {\it cf.} Theorem \ref{critac}.

\smallskip\noindent{\it Non-Uniform Hyperbolicity (NUH)}: a cocycle $(\a,S_{E-V})$ is Non-Uniformly Hyperbolic if its (upper) {\it Lyapunov exponent}
$$LE(\a,S_{E-V})=\lim_{n\to\infty}\int_{\T}\frac{1}{n}\ln\|S_{E-V}^{(n)}(x)\|dx
$$
is positive and if it is at the same time not uniformly-hyperbolic or equivalently if $E\in \Sigma_{V,\a}$.
The Theorem of Bourgain and Goldstein \cite{BG} is in fact the statement that if for all $E$ and all $\a$ the Lyapunov exponent of $(\a,S_{E-V})$ is positive then  the operator  $H_{V,\a,0}$  has pp spectrum for Lebesgue a.e. $\a$. The same statement holds if the preceding conditions are satisfied for all $E$ and all irrational $\a$ in some nonempty open intervals. Proving for a given (analytic) cocycle that $LE(\a,S_{E-V})$ is positive is usually done {\it via} {\it Herman's subharmonicity trick} \cite{He}. \footnote{Another method is provided by \cite{Young} with less regular potentials but with diophantine conditions on the frequency and weaker conclusion.}

\smallskip To produce mixed spectrum in Theorem \ref{MainA} one idea is thus to produce ranges of the energy parameter $E$ for which $(\a, S_{E-V})$ has a reducible (or almost-reducible) behavior and other ranges for which it has a NUH behavior.
It turns out that if $V$ is in the class $\cP^\infty$ with $L(V)<1/q$,  this is the case for the family of cocycles $(\a,S_{E-V})^q$ at least if {\it $\a$ is very close to a rational of the form $p/q$.} Indeed, in this case  $(\a,S_{E-V})^q$ will be of the form $(q\a,S_{E-V}^{(q)})$ where for $E$ in some range of parameters one has: (i) $\max_{x\in\T}|\tr S_{E-V}^{(q)}|<2$; and for some other range of parameters $E$: (ii) the image of $\T$ by $\tr S_{E-V}^{(q)} $ contains strictly $[-2,2]$. This leads to the study of cocycles that we shall call {\it totally elliptic} in case (i)  and {\it regular of mixed type} in case (ii). The relevant theorem attached to case (i) is Theorem \ref{theo:mainac} proved in Section \ref{sec:2}. By using a method of algebraic conjugation (also known as the {\it Cheap Trick}, \cite{FK}, \cite{AFK}) we can reduce the analysis of an {\it a priori} non-perturbative situation   to the more classical one of the perturbation of a constant cocycle (Eliasson's Theorem). The theorem that allows us to prove positivity of the Lyapunov exponents in case (ii) is Theorem \ref{theo:5.1} of Section \ref{sec:4}. The method, though it relies on subharmonicity, is different in spirit from Herman's method and closer to the acceleration theory developed by Avila in \cite{A-Acta} even if we don't need the full strength of this theory. 

The dynamical analysis in the proof of Theorem \ref{MainC} uses results of Section \ref{sec:2} and Herman's subharmonicity trick. 

As a final comment, we mention that the current method we use does not allow us to produce many alternating ac and pp components. The  analysis of a more general situation than (ii), namely that of cocyles of mixed type  which are not regular (if the cocycle is not regular this only means that the image of $\T$ by $\tr S_{E-V}^{(q)} $ contains 2 or $-2$) is then necessary. We defer this analysis to another paper.

\subsection{On the genesis of the paper} 

The results in the present paper are based on questions that arose 
when we numerically investigated the Lyapunov exponent of the Schr\"odinger cocycle $(\alpha,S_{E-V_{K,\lambda}})$ with the 
"peaky" potential 
$$
V_{K,\lambda}(x)=\frac{K}{1+\lambda \sin^2(\pi x)}.
$$ 
When fixing $K>5$ and taking $\lambda$ large, the simulations indicated that the potential $V_{K,\lambda}$ behaves both as a "large" potential 
(in the region $|E|>2+\varepsilon$) 
and as a "small" potential (on regions inside the interval $|E|<2|$). 
More precisely, we noted that the Lyapunov exponent $LE(\alpha,S_{E-V_{K,\lambda}})$ seems to be positive for $E$ outside $[-2.1,2.1]$; but inside 
$[-2,2]$ it looks like there are values of $E$ for which the Lyapunov exponent vanishes (see Figures (\ref{fig1}) and (\ref{fig2}); 
in Figure (\ref{fig1}) we have also included a plot of the rotation number $\rho_{\a}(E):=\rho(\a,S_{E-V_{K,\l}})$. 
This seemed unexpected. Since the potential 
is not small, we could not explain this latter observation by directly applying reducibility results (such as Eliasson's theorem \cite{El-cmp}). Provided that the 
numerical experiments showed a correct picture, there had to be some resonance phenomena going on. This is where our journey began.

Our Theorem \ref{MainC} explains this phenomenon for $\a$ diophantine very close to $1/2$. It would be nice to have an explanation for the Golden Mean $\a=(\sqrt{5}-1)/2$, a number of constant type. Maybe the renormalization techniques of \cite{AK1} and \cite{AK2} can be of some use to tackle this question.
\begin{figure}%
    \centering
    \subfloat[$E\mapsto LE(\alpha,S_{E-V_{K,\lambda}})$ on the interval $(-3,10)$]{{\includegraphics[width=6cm]{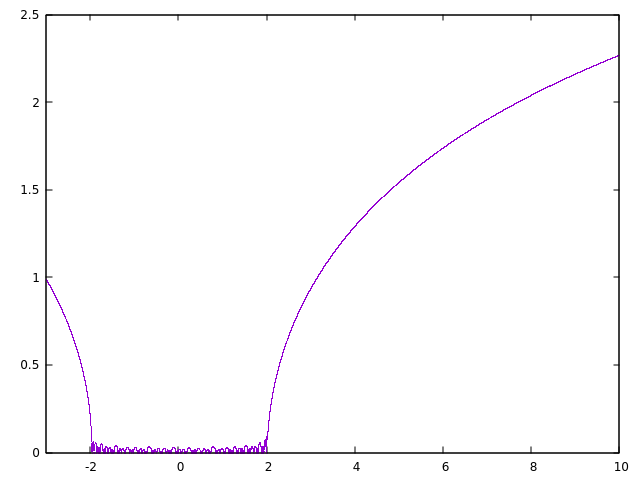} }}%
    \qquad
    \subfloat[$E\mapsto \rho_\alpha(E)$ on the interval $(-3,10)$]{{\includegraphics[width=6cm]{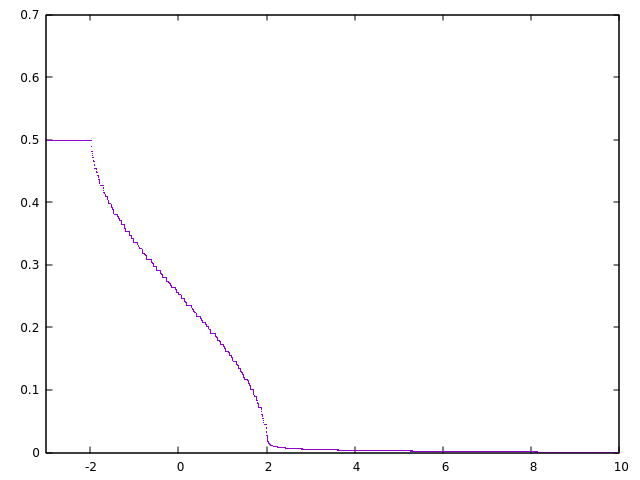} }}%
    \caption{Numerical computations of (A) the Lyapunov exponent $LE(\alpha,S_{E-V_{K,\lambda}})$ 
and (B) the rotation number $\rho_\alpha(E)$ for $\alpha=(\sqrt{5}-1)/2, K=10$ and $\lambda=10000$.}%
    \label{fig1}
\end{figure}
\begin{figure}
\includegraphics[width=10cm]{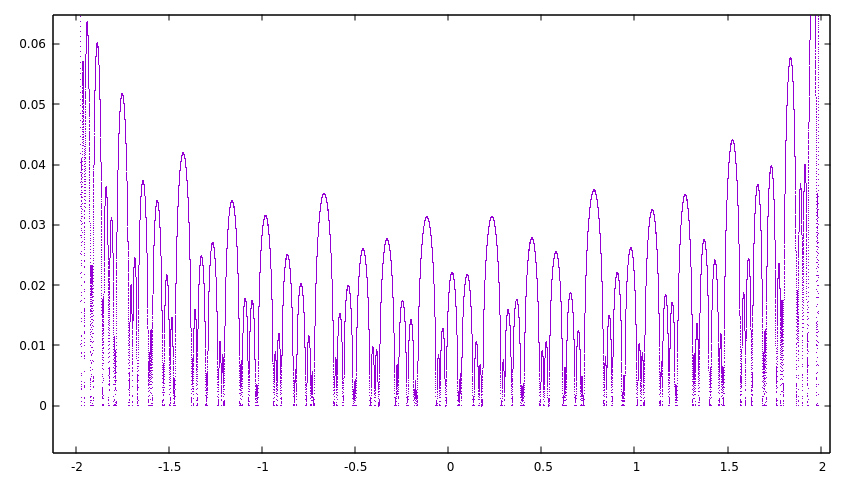}
\caption{Numerical computations of  $E\mapsto LE(\alpha,S_{E-V_{K,\lambda}})$ on $[-2,2]$ for $\alpha=(\sqrt{5}-1)/2, K=10$ and $\lambda=10000$.}\label{fig2}
\end{figure}

\section{Criteria for ac and pp spectrum}
\subsection{Background}
\subsubsection{Norms} If $f:\R/\Z\to\R$ is a $C^\infty$-function we define its $C^k$ norm $\|f\|_{C^k}$ (or for short $\|f\|_{k}$) by
$$\|f\|_{C^k}=\max_{0\leq l\leq k}\sup_{x\in\T}|\pa^l f(x)|.
$$
If $f:\R/\Z\to \R$ is a real-analytic function having a bounded  holomorphic extension $\ti f$ on a complex strip $(\R+i[-h,h])/\Z$ we define 
$$\|f\|_{h}=\sup_{z\in(\R+i[-h,h])/\Z}|\ti f(z)|.
$$
We denote by $C^\omega_{h}(\T)$ the set of all such functions.
\subsubsection{Cocycles}
A $C^k$ ($k\in\N\cup\{\infty,\omega\}$)  {\it quasi-periodic cocycle} is a map $\T^d\times \R^2\toitself$, $(x,y)\mapsto (x+\a,A(x)y)$ where $\a\in\T^d$ and $A:\T^d\to SL(2,\R)$ is  of class $C^k$. If $A$ is a constant map we say that the cocycle $(\a,A)$  is constant and we say that it is respectively  elliptic, unipotent or hyperbolic if $A$ is respectively elliptic ($|\tr(A)|<2$), unipotent ($|\tr(A)|=2$) or hyperbolic ($|\tr(A)|>2$). 
Two cocycles $(\a,A_{i})$, $i=1,2$,  are said to be conjugated if there exists a map  $B:(\R/2\Z)^d\to SL(2,\R)$ of class $C^k$ such that $(0,B)\circ(\a,A_{1})\circ (0,B)^{-1}=(\a,A_{2})$ or equivalently $A_{2}(\cdot)=B(\cdot+\a)A_{1}(\cdot)B(\cdot)^{-1}$.
A cocycle $(\a,A)$ is said to be {\it $C^k$-reducible} if it can be conjugated to a {\it constant} cocycle
and {\it $C^k$-almost reducible}
  if there exists a sequence of maps $B_{n}:(\R/2\Z)^d\to SL(2,\R)$ of class $C^k$ such that the sequence  $B_{n}(\cdot+\a)A(\cdot)B(\cdot)^{-1}$ converges  in $C^k$-topology  \footnote{When $k=\omega$ we require the convergence to hold on some fixed analyticity band $(\R+\sqrt{-1}[-h,h])/\Z$} to a constant elliptic or unipotent  cocycle\footnote{We exclude constant hyperbolic cocycles in this definition just to have nicer statements in the paper.} as $n$ goes to infinity.  

\subsubsection{Lyapunov exponent}  
The {\it (upper) Lyapunov exponent} of a cocycle $(\a,A)$ is the nonnegative  limit 
$$LE(\a,A):=\lim_{n\to\infty}\frac{1}{n}\int_{\T^d}\ln \|A_{n}(x)\|dx$$
where $(n\a,A_{n})=(\a,A)^n$ (equivalently for $n\geq 1$, $A_{n}(\cdot)=A(\cdot+(n-1)\a)\cdots A(\cdot)$).

By Kingman's Ergodic  Theorem, when  the translation $x\mapsto x+\a$ is minimal on $\T^d$  one has for Lebesgue a.e. $x\in\T^d$ 
\be LE(\a,A)=\lim_{n\to\infty}\frac{1}{n}\ln \|A_{n}(x)\|.\label{LE}
\ee
If furthermore $LE(\a,A)>0$ there exists for a.e. $x\in\T^d$ a measurable decomposition 
\be \R^2=E_{s}(x)\oplus E_{u}(x)\label{oseledec}\ee  
such that $A(x)E_{s,u}(x)=E_{s,u}(x+\a)$ and for any $v\in E_{s}(x)$ (resp. $v\in E_{u}(x)$) the limit of $(1/|n|)\ln \|A_{n}(x)v\|$ when $n$ goes to $\infty$ (resp. $-\infty$) is equal to $-LE(\a,A)$.
 
 When $LE(\a,A)>0$  and the convergence in (\ref{LE}) is uniform (the decomposition (\ref{oseledec}) is then continuous) one says that the cocycle $(\a,A)$ is {\it uniformly hyperbolic} (UH). If $LE(\a,A)>0$ and the cocycle $(\a,A)$  is not UH we   say it is {\it non-uniformly hyperbolic} (NUH).

\subsubsection{Rotation number} A continuous cocycle $(\a,A):\T^d\times \R^2\toitself$ naturally defines a {\it projective cocycle} $f_{(\a,A)}:\T^d\times \bS^1\toitself$, $(x,y)\mapsto (x+\a,A(x)y/\|A(x)y\|$). Let $\pi:\R\to\bS^1$ be the projection $y\mapsto (\cos(2\pi y),\sin(2\pi y))$. If the map $A:\T^d\to SL(2,\R)$ is {\it homotopic to the identity}, there exists a lift $F_{(\a,A)}:\T^d\times\R\toitself$, $(x,y)\mapsto (x+\a,g(x,y))$, $g$ being   continuous of the form $g(x,y)=y+\ph(x,y)$ where   $\ph$ is 1-periodic in $y$, such that  $\pi (g(x,y))=A(x)y/\|A(x)y\|$. If $\a$ has rationally independent coordinates  one can prove (\cite{He}) that the following limit
\be r_{(\a,A)}(x,y):=\lim_{n\to\infty}\frac{g(F_{(\a,A)}^n(x,y))-y}{n}\label{rotnumb}
\ee
exists for all $(x,y)$ and is independent of $(x,y)$. This limit is denoted by $\bar \rho(\a,A)$ and its class modulo 1 is denoted by $\rho(\a,A)$ (it is independent of the preceding construction) and is called the {\it fibered rotation number} (for short the rotation number) of the cocycle $(\a,A)$.

We extend the preceding definition when $d=1$ to the case where $\a$ is rational. If  $\a=0$ the limit  $r_{(0,A)}(x,y)$ in equation (\ref{rotnumb}) is defined for any $x\in\T$ and any $y\in \R$, is independent on $y$ but then depends on $x$. We define
$$\bar \rho(0,A)=\int_{\T}r_{(0,A)}(x,y)dx.
$$
When $\a=p/q$ is rational $(p,q)\in\Z\times\N^*$, ($p\wedge q=1$) we define
$$\bar \rho(p/q,A)=\frac{1}{q}\bar \rho((p/q,A)^q).$$
The value of $\rho(\a,A):=\bar\rho(p/q,A) \mod 1$ does not depend on the preceding construction.
 
\subsubsection{Schr\"odinger cocycles} 
A cocycle of the form $(\a,S_{V})$ with $S_{V}=\bm V & -1\\ 1& 0\em$ where $V\in C^k(\T^d,\R)$ is said to be a {\it Schr\"odinger cocycle}. The family $(\a,S_{E-V})$, $E\in\R$ is called the family of Schr\"odinger cocycles  associated to the potential $V$. When $\a$ has rationally independent coordinates, the dynamics of this family of cocycles is intimately related to the spectral properties of the family of  Schr\"odinger operator $H_{V,\a,x}$, $x\in\T^d$. For example $E\notin \Sigma_{V,\a}$ if and only if $(\a,S_{E-V})$ is a uniformly hyperbolic cocycle.  Another example of such a correspondence is the following. Define the probability measure $\nu_{V,\a}:=\int_{\T}\mu_{V,\a,x}dx$. Then for any $E\in\R$, the rotation number of $(\a,S_{E-V})$ is related to the {\it density of states} $\nu_{V,\a}(]-\infty, E])$ by
\be \nu_{V,\a}(]-\infty, E])=1-2\rho(\a,S_{E-V}).\label{nurho}
\ee
 
\subsubsection{Diophantine conditions}
For $d\in\N^*$, $\eta>0,\sigma\geq d+1$ we denote by $DC_{d}(\eta,\sigma)$ the set of $\a\in\T^d$ such that 
$$\forall\ (k,l)\in(\Z^d\setminus\{0\})\times\Z,\ |\<k,\a\>-l|\geq \frac{\eta}{|k|^{\sigma-1}}.$$
For $\sigma>d+1$ and $\eta$ small enough this is a set of positive Lebesgue measure and the set $DC_{d}(\sigma)=\bigcup_{\eta>0}DC_{d}(\eta,\sigma)$ is a set of full Lebesgue measure. An element $\a\in\T^d$ is said to be {\it diophantine} if it is in the set $DC_{d}=\bigcup_{\sigma\geq d-1}DC_{d}(\sigma)$.
 
We say that $\rho\in\R$ is in $DS_{\a}(\kappa)$ if 
$$\forall\ k\in\Z^d\setminus\{0\}, \ \min_{l\in\Z}|2\rho-\<k,\alpha\>-l|\geq \frac{\kappa}{|k|^{d+2}}$$ and we set $DS_{\a}=\cup_{\kappa>0}DS_{\a}(\kappa)$. Elements of $DS_{\a}$ are said to be {\it diophantine with respect to $\a$}. It is easy to check that $DS_{\a}$ is a set of full Lebesgue measure. 
\subsection{Criterium for ac spectrum}
We first recall the following fundamental result by Eliasson:
\begin{theo}[Eliasson, \cite{El-cmp}]\label{theo:eliassoncmp}Let $d\in\N^*$,  $\a \in DC_{d}(\eta,\sigma)$  and $h>0$. There exists $\varepsilon_{0}(\eta,\sigma,h)$ such that if  $V\in C^\omega_{h}(\T^d,\R)$ satisfies $\|V\|_{h}\leq \varepsilon_{0}$  then  the spectral measure $\mu_{V,\alpha,x}$ is ac for any $x\in\T^d$. 
\end{theo}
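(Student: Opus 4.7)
The plan is to prove Theorem \ref{theo:eliassoncmp} by a KAM reducibility scheme: for almost every energy $E$ in the spectrum, the Schr\"odinger cocycle $(\a,S_{E-V})$ can be analytically conjugated to a constant elliptic cocycle; this will force boundedness of all transfer matrices $S_{E-V}^{(n)}(x)$ on a neighborhood of such $E$, and a classical subordinacy argument will then yield ac spectrum.

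The KAM scheme runs as follows. For $|E|<2$ and $V=0$, the constant cocycle $(\a,S_E)$ is a rotation $R_{\rho_0(E)}$ with $2\cos(2\pi\rho_0(E))=E$; writing $S_{E-V}(x)=R_{\rho_0(E)}e^{F_0(x;E)}$ with $\|F_0\|_h\lesssim\|V\|_h$, I would look at each step $n$ for an analytic $Y_n:(\R/2\Z)^d\to sl(2,\R)$ that solves the cohomological equation
\be
Y_n(x+\a)-\Ad(R_{\rho_n}^{-1})Y_n(x)=F_n(x)-\widehat{F_n}(0)^{\mathrm{diag}},
\label{coho}
\ee
truncated to Fourier modes $|k|\le N_n$. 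In Fourier space this amounts to dividing by $e^{2\pi i(\<k,\a\>\pm 2\rho_n)}-1$. Using the diophantine condition $\a\in DC_d(\eta,\sigma)$ together with a \emph{second Melnikov} condition $\|2\rho_n-\<k,\a\>\|_{\R/\Z}\ge\eta'/|k|^{\sigma-1}$ on the current rotation number, one obtains a solution $Y_n$ with controlled loss of analyticity $h_{n+1}<h_n$; conjugating by $e^{Y_n}$ replaces $F_n$ by a quadratically smaller $F_{n+1}$ on the narrower strip. Iterating with $h_n$ decreasing geometrically and $\|F_n\|_{h_n}$ decreasing super-exponentially produces a convergent composition of conjugations and a constant limit $R_{\rho_\infty}$.

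The core difficulty, and what I expect to be the main obstacle, is handling the \emph{resonances}: rotation numbers $\rho_n$ such that $2\rho_n$ comes near $\<k,\a\>\mod\Z$ for some $0<|k|\le N_n$. At each such resonance I would first pre-conjugate by the explicit map $x\mapsto R_{-\<k,x\>/2}$ (defined on $(\R/2\Z)^d$, hence the doubled torus in the statement), which shifts the rotation number to $\rho_n-\<k,\a\>/2$ while at worst inflating the perturbation by a polynomial factor in $N_n$; this pre-conjugation reveals a non-resonant remainder to which the standard step can be applied. Because resonances are forced to become sparser ($|k|$ must grow) while $\|F_n\|_{h_n}$ shrinks super-exponentially, the scheme still converges except possibly on a \emph{resonant} set of rotation numbers satisfying $2\rho_\infty\in\<\Z^d,\a\>+\Z$. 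The measure of this bad set, transported through the monotone density-of-states map $E\mapsto\rho(\a,S_{E-V})$ via (\ref{nurho}), is then controlled by the diophantine estimates and is shown to have zero Lebesgue measure inside $\Sigma_{V,\a}$.

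Once reducibility to an elliptic constant holds for a.e.~$E\in\Sigma_{V,\a}$, the conjugating matrix $B(x;E)$ is uniformly bounded in $x$ and jointly continuous (in fact analytic) in $E$ on the reducibility set, so $\|S_{E-V}^{(n)}(x)\|\le\|B\|_\infty^2$ for all $n$ and $x$. I would conclude by invoking the criterion (Gilbert--Pearson/Last--Simon subordinacy, to be recalled as Theorem \ref{critac}) that uniform boundedness of transfer matrices on a set of positive Lebesgue measure forces $\mu_{V,\a,x}$ to be purely absolutely continuous on that set for every $x$. Since the reducible energies carry the full spectral weight of $\Sigma_{V,\a}$, the spectral measure is ac, as claimed.
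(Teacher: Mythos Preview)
The paper does not prove Theorem~\ref{theo:eliassoncmp}: it is quoted as Eliasson's result, attributed to \cite{El-cmp}, and followed only by a remark pointing to \cite{HadjAmor} for the discrete analytic case. There is therefore no proof in the paper to compare your sketch against; your outline is broadly faithful to Eliasson's original KAM argument (resonance handling by the $R_{-\<k,\cdot\>/2}$ pre-conjugation on the doubled torus, super-exponential decay of the perturbation, a full-measure set of reducible energies).

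Two points in your write-up are nonetheless inaccurate. First, Theorem~\ref{critac} in this paper is \emph{not} a Gilbert--Pearson/Last--Simon subordinacy criterion; it is the statement that $C^\infty$ almost-reducibility at a single energy $E_*$ yields an open interval of ac spectrum, and its proof (in the Appendix) itself goes through the KAM machinery and Theorem~\ref{Eliassongeneral}. Second, your claim that the conjugacy $B(x;E)$ is ``jointly continuous (in fact analytic) in $E$ on the reducibility set'' and that this gives boundedness ``on a neighborhood of such $E$'' is false: the KAM construction is intrinsically discontinuous in $E$, since the resonance-removing pre-conjugations and the truncation levels $N_n$ depend on which small divisors are encountered, and $\sup_x\|B(x;E)\|$ is not uniformly bounded over the reducibility set. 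The passage to ac spectrum uses only the \emph{pointwise} (in $E$) boundedness of transfer matrices on a set of full spectral measure, combined with Kotani/Deift--Simon theory or the density-of-states relation~(\ref{nurho}); no regularity of $B$ in $E$ is available or needed.
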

\begin{rem} Strictly speaking, Eliasson's Theorem is proven in \cite{El-cmp} for  quasi-periodic  1D-Schr\"odinger operators  on the real line (continuous time) with diophantine frequency vectors  and with   small real analytic potentials \footnote{The theorem is also true for any real analytic potential and  for large energies in the spectrum}.  The  analysis of the  1D discrete quasi-periodic Schr\"odinger operators  on $\Z$  with diophantine frequency vector and small real analytic potential is done in \cite{HadjAmor}. 
\end{rem}
The proof of the preceding result is based on a KAM-type inductive procedure that allows for  a very precise description of the dynamical properties of the family of Schr\"odinger cocycles $(\a,S_{E-V})$: in this perturbative regime these cocycles are always uniformly hyperbolic (which is equivalent to $E\notin \Sigma_{V,\a}$) or almost-reducible. The KAM-type inductive scheme of \cite{El-cmp},  \cite{HadjAmor} and the description of the dynamics of the family  of Schr\"odinger cocycles  can be extended to the smooth case ({\it cf.} \cite{FK} and Section \ref{sec:eliassoncinfty}) and 
a close look at Eliasson's proof of \cite{El-cmp} shows that  the spectral result of Theorem \ref{theo:eliassoncmp}  can as well  be extended to the smooth case.
Indeed,   the following more general version of Theorem \ref{theo:eliassoncmp} is valid:
\begin{theo}\label{Eliassongeneral}Let  $d\in\N^*$, $\a\in\T^d$ be diophantine, $V\in C^\infty(\T^d,\R)$ and assume that  there exists $B\in C^\infty((\R/2\Z)^d,SL(2,\R))$ such that for $E$ in some  interval $]\bar E-\delta,\bar E+\delta[$ ($\delta>0$) $B(\cdot+\a)^{-1}S_{E-V}(\cdot) B(\cdot)$ is of the form $A(E-\bar E)e^{F_{E}(\cdot)}$ where $E\mapsto A(E)\in SL(2,\R)$ and $E\mapsto F_{E}(\cdot)\in C^\infty(\T^d,sl(2,\R))$ are  real analytic and  such that 
$$\frac{d}{dE}\rho(\a,A(E))_{|(E=\bar E)}\ne 0
$$
 and
$$\|F_{E}(\cdot)\|_{C^0}=O((E-\bar E)^2).
$$
Then, there exists an nonempty open   interval $I$ containing $\bar E$ on which $\mu_{V,\a,x}$ is (non trivial and) ac for all $x\in\T^d$.
\end{theo}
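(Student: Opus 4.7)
The plan is to use the conjugation $B$ to put the problem in the standard perturbative setting of Eliasson-type reducibility theory, apply the $C^\infty$ KAM reducibility theorem on a full-measure set of energies, and finally invoke the criterion of Theorem \ref{critac}: uniform boundedness of the iterates of the Schrödinger cocycle on a set of energies of full Lebesgue measure in an interval inside the spectrum implies that the spectrum is purely absolutely continuous on that interval.

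Conjugation by $B \in C^\infty$ preserves uniform boundedness of cocycle iterates, so it is equivalent to analyze the conjugated family $\tilde A_E(x) := A(E-\bar E)e^{F_E(x)}$ for $E$ in a neighborhood of $\bar E$. By hypothesis this family is, in every $C^k$ norm on a slightly smaller neighborhood, an $O((E-\bar E)^2)$ perturbation of the constant cocycle $A(E-\bar E)$. The nondegeneracy $\frac{d}{dE}\rho(\a,A(E))_{|E=\bar E}\ne 0$ makes $E\mapsto \rho(\a,\tilde A_E)$ a homeomorphism from some open interval $I \ni \bar E$ onto an arc of $\T$; hence the preimage $\mathcal{R}\subset I$ of the full-measure set $DS_\a$ has full Lebesgue measure in $I$.

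For each $E\in\mathcal{R}$ sufficiently close to $\bar E$, the smooth version of Eliasson's reducibility theorem (extended in \cite{FK} and recalled in Section \ref{sec:eliassoncinfty}) applies: with $\a\in DC_d$ fixed and $\rho(\a,\tilde A_E)\in DS_\a$, the quadratic smallness of $F_E$ meets the KAM smallness threshold, and $(\a,\tilde A_E)$ is smoothly conjugated to a constant elliptic cocycle. The iterates $\tilde A_E^{(n)}$ are therefore uniformly bounded in $n$, and the same holds for the original iterates $S_{E-V}^{(n)}$ up to the $C^0$-norm of $B$. Moreover $I \subset \Sigma_{V,\a}$, because the rotation number varies nontrivially along $I$.

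Theorem \ref{critac} then yields that $\mu_{V,\a,x}|_I$ is absolutely continuous and nontrivial for every $x\in\T^d$, completing the proof. The main technical obstacle is the smooth, parameter-dependent KAM scheme itself: at each step one must control both the regularity loss and the measure of excluded parameters. The quadratic smallness of $F_E$ is exactly what is needed so that the excluded set has zero density at $\bar E$ and the induction closes; this execution essentially follows \cite{FK, El-cmp, HadjAmor}, with the usual bookkeeping for the $E$-parameter.
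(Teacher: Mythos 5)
The paper itself does not supply a proof of Theorem~\ref{Eliassongeneral}: it is asserted to follow from Eliasson's original analysis in \cite{El-cmp} (and its $C^\infty$ extensions \cite{FK}, \cite{HadjAmor}), which combines a parametrized KAM scheme with a fine spectral argument. Your sketch reproduces the dynamical half of that analysis correctly: pass to $\tilde A_E(x)=A(E-\bar E)e^{F_E(x)}$, note that real analyticity in $E$ together with $\|F_E\|_{C^0}=O((E-\bar E)^2)$ upgrades to $\|F_E\|_{C^k}=O_k((E-\bar E)^2)$, use the nondegeneracy of $\rho(\alpha,A(\cdot))$ to get local monotonicity of $E\mapsto\rho(\alpha,\tilde A_E)$, and conclude reducibility of $(\alpha,\tilde A_E)$ for $E$ in a full-measure subset of a small interval $I\ni\bar E$.

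The spectral conclusion, however, contains a genuine gap. You appeal to ``the criterion of Theorem~\ref{critac}: uniform boundedness of the iterates of the Schr\"odinger cocycle on a set of energies of full Lebesgue measure in an interval inside the spectrum implies that the spectrum is purely ac on that interval.'' Theorem~\ref{critac} says nothing of the kind; it says that $C^\infty$ almost-reducibility at a single energy $E_*$ implies the existence of \emph{some} nonempty open interval of ac spectrum, and its proof in the Appendix explicitly concludes by invoking Theorem~\ref{Eliassongeneral}. Using \ref{critac} to prove \ref{Eliassongeneral} would therefore be circular. The boundedness criterion you actually need --- that pointwise boundedness of $\sup_n\|S_{E-V}^{(n)}\|$ for Lebesgue-a.e.\ $E\in I$ forces $\mu_{V,\alpha,x}|_I$ to be ac --- is nowhere stated in the paper and is not a formal consequence of a.e.-reducibility: the conjugacy norms $\|B_E\|$ blow up as $E$ approaches the resonant (Lebesgue-null) excluded set, and nothing in your argument rules out a singular spectral component concentrated there. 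Closing this requires either (i) a quantitative KAM scheme yielding integrated-in-$E$ control of the transfer matrices plus a Gilbert--Pearson/Kotani/Avila-type criterion (cf.\ \cite{A-acspec}) converting such control into absolute continuity, or (ii) reproducing Eliasson's direct spectral analysis from \cite{El-cmp}, which tracks the excluded energies at each inductive step and shows they carry no singular part. Neither appears in your proposal; the ``main technical obstacle'' here is precisely this spectral estimate, not the KAM scheme you flag.
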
 

The preceding theorem holds true at least on intervals of energies where the cocycle $(\a,S_{E-V})$ is  almost-reducible. 
\begin{theo} [Extension of Eliasson's Theorem]\label{critac}Let $V\in C^\infty(\T^d,\R)$ and $\a$ diophantine. If for some $E_{*}\in\R$ the cocycle $(\a,S_{E_{*}-V})$ is $C^\infty$ almost-reducible then there exists a nonempty open  interval $I_{ac}$  such that for all $x\in\T^d$, the restriction of  the spectral measure $\mu_{V,\alpha,x}$ to $I_{ac}$ is (non-trivial and) ac. 
\end{theo}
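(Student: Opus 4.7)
The plan is to reduce to Theorem \ref{Eliassongeneral} by running the $C^\infty$ Eliasson--Hadj Amor KAM machinery at a carefully chosen base energy $\bar E$ close to $E_*$. First, from $C^\infty$ almost-reducibility at $E_*$ one picks, for any $\varepsilon>0$, a conjugation $B\in C^\infty((\R/2\Z)^d,SL(2,\R))$ and a constant $A\in SL(2,\R)$ with $|\tr A|\le 2$ such that
\be
B(x+\a)S_{E_*-V}(x)B(x)^{-1}=A\exp(F_0(x)),\qquad \|F_0\|_{C^\infty}<\varepsilon.
\ee
Since $\pa_E S_{E-V}=\bm 1 & 0\\ 0 & 0\em$, the same conjugation yields, for $E$ near $E_*$, the expansion $A\exp(F_0(x))+(E-E_*)N(x)$ with $N\in C^\infty(\T^d,\Mat_2(\R))$.

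The next step is to fix $\bar E$ close to $E_*$ at which the scheme can be pushed to full reducibility. Since $(\a,S_{E_*-V})$ has zero Lyapunov exponent, $E_*\in\Sigma_{V,\a}$; the quasi-periodic spectrum has no isolated points, so by the density-of-states identity (\ref{nurho}) the map $E\mapsto\rho(\a,S_{E-V})$ is continuous and non-constant in every neighborhood of $E_*$, hence attains values Diophantine with respect to $\a$ on a set of positive Lebesgue measure arbitrarily close to $E_*$. Choose $\bar E$ in this set. The combined Diophantine conditions on $\a$ and on the limiting rotation make the smooth version of the Eliasson--Hadj Amor KAM scheme (\cite{El-cmp}, \cite{HadjAmor}, together with the $C^\infty$ extension alluded to after Theorem \ref{Eliassongeneral}) converge at $\bar E$: one obtains $\hat B\in C^\infty$ with $\hat B(x+\a)S_{\bar E-V}(x)\hat B(x)^{-1}=A_*$ constant and elliptic. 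One additional KAM step in the parameter $E$ absorbs the linear-in-$(E-\bar E)$ part of the resulting family into a modification of $A_*$, producing a real-analytic $A(E-\bar E)$ and leaving
\be
\hat B_E(x+\a)S_{E-V}(x)\hat B_E(x)^{-1}=A(E-\bar E)\exp(F_E(x))
\ee
with $F_E(\cdot)\in C^\infty(\T^d,sl(2,\R))$ real-analytic in $E$ and $\|F_E\|_{C^0}=O((E-\bar E)^2)$.

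Transversality $(d/dE)\rho(\a,A(E))|_{\bar E}\ne 0$ then follows from (\ref{nurho}): the rotation of $A(E-\bar E)$ coincides with $\rho(\a,S_{E-V})$ up to a correction of order $\|F_E\|=O((E-\bar E)^2)$, while the latter is strictly monotone at the non-isolated spectrum point $\bar E$. Invoking Theorem \ref{Eliassongeneral} produces an open interval $I_{ac}\ni\bar E$ on which $\mu_{V,\a,x}$ is non-trivial and absolutely continuous for every $x\in\T^d$. The main technical obstacle is the convergence of the KAM scheme at $\bar E$, i.e.\ the passage from almost-reducibility to reducibility; this hinges on being able to select $\bar E$ with Diophantine rotation number, which is exactly guaranteed by the positive-measure statement above and constitutes the core of the $C^\infty$ extension of Eliasson's theorem referenced in the preceding remarks.
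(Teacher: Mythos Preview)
Your overall strategy---upgrade almost-reducibility at $E_*$ to genuine reducibility at a nearby $\bar E$ with Diophantine fibered rotation number, solve a cohomological equation to put the one-parameter family in the normal form required by Theorem \ref{Eliassongeneral}, then invoke that theorem---is exactly the route the paper takes. The gap is in your transversality step.

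You assert $(d/dE)\rho(\a,A(E))|_{E=\bar E}\ne 0$ on the grounds that $E\mapsto\rho(\a,S_{E-V})$ is ``strictly monotone at the non-isolated spectrum point $\bar E$''. But $E\mapsto\rho(\a,S_{E-V})$ is merely non-decreasing; even at points of the spectrum nothing prevents a vanishing derivative (or non-differentiability). Order-theoretic strict monotonicity does not give $(d/dE)\rho\ne 0$. The paper supplies this via two ingredients you omit. First, it shows that the set $Z_\kappa$ of energies where $(\a,S_{E-V})$ is reducible to an elliptic constant with rotation number in $DS_\a(\kappa)$ has \emph{positive Lebesgue measure}; this is not automatic (the preimage of a full-measure set under a merely continuous map can be null) and uses the Lipschitz bound of Lemma \ref{lem:var2} together with Lemma \ref{lem:3.9}. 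Second, since the Lyapunov exponent vanishes on $Z_\kappa$, the Deift--Simon inequality \cite{DS} gives
\[
4\pi\sin\bigl(2\pi\rho(\a,S_{E-V})\bigr)\,\frac{d\rho(\a,S_{E-V})}{dE}\ge 1
\]
for Lebesgue-a.e.\ $E\in Z_\kappa$. Choosing $\bar E\in Z_\kappa$ where this bound holds then yields the required transversality. The positive-measure step is essential precisely so that the a.e.\ Deift--Simon bound and the Diophantine rotation condition can be satisfied simultaneously; without these two inputs your argument does not produce any $\bar E$ at which the hypothesis $(d/dE)\rho(\a,A(E))|_{E=\bar E}\ne 0$ of Theorem \ref{Eliassongeneral} is verified.
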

\begin{proof}
See the Appendix.
\end{proof}

\subsection{Criterium for pp spectrum}
We recall the following criterium for the existence of pp spectrum in some interval due to Bourgain and Goldstein \cite{BG} (see also the nice survey  \cite{JM}). 
\begin{theo}[Bourgain-Goldstein Theorem]\label{theo:BG}Let $V\in C^\omega(\T,\R)$, $I_{pp}$ and $J$ be two nonempty open intervals. Assume that  for all $(E,\a)\in I_{pp}\times (J\setminus(\Q/\Z))$ one has $LE(\a,S_{E-V})>0$. Then, there exists a set $\mathcal{BG}\subset J$, ${\rm Leb}(J\setminus\mathcal{BG})=0$, such that for all $\a\in\mathcal{BG}$, the  restriction of the spectral measure $\mu_{V,\alpha,0}$ to $I_{pp}$ is pp (and satisfies Anderson localization).
\end{theo}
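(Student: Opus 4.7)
The plan is to follow the original Bourgain--Goldstein strategy: first prove Anderson localization on $I_{pp}$ for frequencies in a full-measure subset $\mathcal{BG}\subset J$, from which pure point spectrum at the specific phase $x=0$ follows via Schnol's theorem. Recall that by Schnol, $\mu_{V,\a,0}|_{I_{pp}}$--a.e.\ energy $E$ admits a polynomially bounded generalized eigenfunction $\psi:\Z\to\C$; showing that every such $\psi$ with $E\in I_{pp}$ in fact decays exponentially forces $\psi\in\ell^2(\Z)$ and hence purely atomic spectrum on $I_{pp}$.

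The first ingredient to establish is a large deviation theorem (LDT) for the transfer matrices. Since $V$ is real analytic, the functions $u_n(z)=\tfrac{1}{n}\log\|S_{E-V}^{(n)}(z)\|$ extend subharmonically to a complex strip; together with the convergence $\int_{\T}u_n\to LE(\a,S_{E-V})$ and a Riesz representation / BMO argument due to Bourgain--Goldstein, one obtains a bound of the form
\[\Leb\Bigl\{x\in\T:\bigl|\tfrac{1}{n}\log\|S_{E-V}^{(n)}(x)\|-LE(\a,S_{E-V})\bigr|>\ep\Bigr\}\leq e^{-c(\ep) n}\]
valid for $n\geq n_{0}$, provided $LE(\a,S_{E-V})\geq\g>0$. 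The hypothesis of the theorem together with continuity of $LE$ in $E$ supplies such a lower bound on any compact sub-interval $\overline{I'_{pp}}\Subset I_{pp}$.

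Next I would exploit the semi-algebraic (or subanalytic-approximable) structure of the transfer cocycle to run Bourgain's elimination of double resonances. At scale $N$, the set of pairs $(\a,E)\in J\times I_{pp}$ for which two disjoint intervals of length $\sim N$ simultaneously violate the LDT is of controlled semi-algebraic complexity; Bourgain's quantitative transversality lemma then shows that, after removing from $J$ an $\a$-set of measure $o(1)$ as $N\to\infty$, the surviving frequencies admit an effective arithmetic separation between ``resonant'' energies at distinct scales. A Borel--Cantelli intersection along a dyadic sequence of scales $N_{k}\to\infty$ produces the full-measure set $\mathcal{BG}\subset J$.

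The main obstacle is the multiscale conclusion that transforms finite-volume Green's function estimates into exponential decay of every generalized eigenfunction. For $\a\in\mathcal{BG}$ and $\psi$ as above with eigenvalue $E\in I_{pp}$, one applies the Poisson-type identity
\[\psi(n)=-G_{[a,b]}(a,n;E)\psi(a-1)-G_{[a,b]}(b,n;E)\psi(b+1),\]
where $G_{[a,b]}$ is the Green's function of the box restriction of $H_{\ti V,\a,0}$ to $[a,b]$. On a ``good'' scale $N$, Cramer's rule expresses these Green's functions in terms of $S_{E-V}^{(N)}$, and combined with the LDT yields off-diagonal exponential decay $|G_{[a,b]}(x,y;E)|\leq e^{-c|x-y|}$. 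The elimination step ensures that for every $n\in\Z$ far enough from the origin one can find a good box containing $n$ whose endpoints are both far from $n$; iterating this chain of Poisson identities produces $|\psi(n)|\leq e^{-c|n|/2}$, establishing Anderson localization. Schnol's theorem then concludes that the restriction of $\mu_{\ti V,\a,0}$ to $I_{pp}$ is pure point, as claimed.
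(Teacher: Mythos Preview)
The paper does not prove this theorem; it is stated as a criterium quoted from Bourgain--Goldstein \cite{BG} (with a pointer to the survey \cite{JM}) and used as a black box in Corollary~\ref{statementpp}. So there is no ``paper's own proof'' to compare against.

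Your outline is a faithful sketch of the original Bourgain--Goldstein argument: subharmonic large deviation estimates for $\tfrac1n\log\|S_{E-V}^{(n)}\|$, semi-algebraic elimination of double resonances to produce the full-measure frequency set, Green's function bounds via Cramer's rule and the LDT, and Schnol's theorem to pass from exponential decay of generalized eigenfunctions to pure point spectrum. A few points you gloss over would need to be made precise in a full write-up: the LDT constants (and the threshold $n_0$) depend not only on the Lyapunov lower bound $\gamma$ but also on arithmetic data of $\alpha$ (a strong Diophantine condition enters in the BG argument before the Borel--Cantelli step); the uniform lower bound $LE(\a,S_{E-V})\geq\gamma>0$ on a compact $\overline{I'_{pp}}$ requires the joint continuity of the Lyapunov exponent in $(E,\a)$ for analytic potentials, which is itself a nontrivial input; and in the Poisson identity you should have $H_{V,\a,0}$ rather than $H_{\ti V,\a,0}$. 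None of these are gaps in the strategy, only in the level of detail.
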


\section{Totally elliptic cocycles}\label{sec:2}

We say that $A:\T\to SL(2,\R)$ is {\it totally elliptic} if for any value of $x$, $|{\rm tr }A(x)|<2$. This is equivalent to the fact that for any  $x$, $A(x)$ is an elliptic matrix (a matrix in $SL(2,\R)$ which is conjugated to a rotation matrix different from $\pm I$). If $p/q$ is a rational number and $\prod_{k=q-1}^0 A(\cdot+kp/q)$ is totally elliptic, we say that the cocycle $(p/q,A)^q$ is totally elliptic. 

For $\eta>0,\sigma\geq 2$ we recall that $DC_{1}(\eta,\sigma)$ is the set of $\a\in\T$ such that 
$$\forall\ (k,l)\in\Z\times\N^*,\ |\a-\frac{k}{l}|\geq \frac{\eta}{l^{\sigma}}$$
and  we denote by $D_{p/q}(\eta)$ 
$$D_{p/q}(\eta)=]\frac{p}{q}-\eta,\frac{p}{q}+\eta[\cap DC_{1}(\eta^2,3).$$

\begin{lem}\label{lem:posmeas} For any $0<\eta<1/2$, $D_{p/q}(\e)$  is a set of positive Lebesgue measure. 
\end{lem}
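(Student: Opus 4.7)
The plan is to observe that the interval $I:=]\frac{p}{q}-\eta,\frac{p}{q}+\eta[$ has Lebesgue measure $2\eta$, and then show that the set of $\alpha\in\T$ violating the diophantine condition $DC_{1}(\eta^2,3)$ has total measure strictly less than $2\eta$. Since $D_{p/q}(\eta)=I\cap DC_{1}(\eta^2,3)$, this will immediately give a positive lower bound for $|D_{p/q}(\eta)|$, and in fact a bound that is uniform in $p$ and $q$.

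First I would decompose the bad set in $\T$ by denominator: for each $l\in\N^*$ introduce
$$B_{l}:=\bigcup_{k\in\Z}\Bigl\{\alpha\in\T:\bigl|\alpha-\tfrac{k}{l}\bigr|<\tfrac{\eta^2}{l^3}\Bigr\}.$$
There are at most $l$ distinct points $k/l$ in $\T$ (one per residue class of $k$ mod $l$), and $B_l$ is the union of the open arcs of radius $\eta^2/l^3$ around them, so $|B_{l}|\leq 2\eta^2/l^2$. Summing over $l$ yields the global bound
$$|\T\setminus DC_{1}(\eta^2,3)|\;\leq\;\sum_{l=1}^\infty\frac{2\eta^2}{l^2}\;=\;\frac{\pi^2}{3}\eta^2,$$
and this automatically controls the bad part inside $I$, so
$$|D_{p/q}(\eta)|\;\geq\;2\eta-\frac{\pi^2}{3}\eta^2\;=\;\eta\Bigl(2-\frac{\pi^2}{3}\eta\Bigr).$$
This is strictly positive as soon as $\eta<6/\pi^2$, and since $6/\pi^2>1/2$ the conclusion follows for every $0<\eta<1/2$.

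There is no serious obstacle here: the argument is a one-line Borel--Cantelli-type estimate, and the only point worth noting is why the exponent $3$ (rather than $2$) appears in the definition of $DC_1(\eta^2,3)$. With intervals of length $2\eta^2/l^3$ and at most $l$ centres at each denominator, the total bad measure is governed by $\sum l^{-2}<\infty$ and hence is of size $O(\eta^2)$, which is comfortably beaten by the length $2\eta$ of $I$ for small enough $\eta$; this is exactly the mechanism that forces the diophantine condition to be compatible with a prescribed rational approximation.
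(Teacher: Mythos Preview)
Your proof is correct and follows essentially the same approach as the paper: both bound $|D_{p/q}(\eta)|$ from below by $2\eta-\sum_{l\ge 1}l\cdot\frac{2\eta^2}{l^3}=2\eta-\frac{\pi^2}{3}\eta^2$, the paper merely using the cruder estimate $\sum_{l\ge 1}l^{-2}<2$ to write the right-hand side as $2\eta(1-2\eta)$. The organization and the key counting (at most $l$ centres per denominator, each contributing an arc of length $2\eta^2/l^3$) are identical.
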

\begin{proof}
We have to prove that  the Lebesgue measure of 
$$E:=]\frac{p}{q}-\eta,\frac{p}{q}+\eta[\setminus \bigcup_{(k,l)\in\Z\times\N^*}]\frac{k}{l}-\frac{\eta^2}{l^3},\frac{k}{l}+\frac{\eta^2}{l^3}[
$$
is positive. This measure can be bounded by below by 
$$2\eta-\sum_{l=1}^\infty l\frac{2\eta^2}{l^3}\geq 2\eta(1-2\eta).
$$
\end{proof}

This section is dedicated to the proof of the following theorem.
\begin{theo}\label{theo:mainac}There exists $s_{0}\in\N$ for which the following is true. Let $p/q$ be a rational number, $I\subset \R$ a compact  interval with nonempty interior and a map $I\times \T\to SL(2,\R)$, $(E,x)\mapsto A_{E}(x)$ which is  homotopic to the identity, $C^1$ in $E$ and smooth in $x$. Assume that 
\begin{itemize}
\item for any $E\in I$,  $((p/q),A_{E})^q$ is totally elliptic;
\item the map $I\to\R$, $E\mapsto \rho(p/q,A_{E})$ is not constant.
\end{itemize}
Then, there exists $\e_{0}(\max_{E\in I}\|A_{E}\|_{C^{s_{0}}},q)>0$ such that for any $0<\e\leq \e_{0}$ and  any  $\a\in D_{p/q}(\e)$ there exists a positive Lebesgue measure set  $\cE_{\a}\subset I$ such that for any $E\in\cE_{\a}$, the cocycle $(\a,A_{E})$ is reducible (conjugated to a constant elliptic cocycle). 
 \end{theo}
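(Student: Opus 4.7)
The approach is the ``Cheap Trick'' of algebraic conjugation: reduce the a priori non-perturbative problem for $(\a, A_E)$ to a perturbation of a constant elliptic cocycle over a Diophantine translation, apply Eliasson's Theorem \ref{theo:eliassoncmp}, and then transfer the reducibility of the $q$-th iterate back to the original cocycle.

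\emph{Step 1: passage to the $q$-th iterate and diagonalization.} Write $\a=p/q+\b$ with $|\b|<\e$. The Diophantine condition $\a\in DC_1(\e^2,3)$ descends to $|q\b-m/n|\geq \e^2/(q^2n^3)$ by the change of variables $l=qn$, so $q\b$ is Diophantine. The $q$-th iterate is $(\a,A_E)^q=(q\b,A_E^{(q)}(\cdot,\a))$, and by hypothesis $M_E(x):=A_E^{(q)}(x,p/q)$ is totally elliptic. Joint compactness of $I\times \T$ forces $\tr M_E(x)$ into a compact subset of $(-2,2)$, which allows a smooth diagonalization: there exist $B_E:\R/2\Z\to SL(2,\R)$ and $\th_E:\T\to\R$ (depending on $E$ in a $C^1$ way) with $B_E(x)^{-1}M_E(x)B_E(x)=R_{\th_E(x)}$, the double cover accounting for the sign of the eigenframe. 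Their $C^r$-norms are controlled by $\max_{E}\|A_E\|_{C^{r+1}}$ and by the distance of $\tr M_E$ from $\pm 2$.

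\emph{Step 2: cohomological equation and Eliasson.} Pre-conjugating $(q\b,A_E^{(q)}(\cdot,\a))$ by $B_E$, and using both $A_E^{(q)}(\cdot,\a)-M_E=O(\e)$ and $B_E(\cdot+q\b)-B_E=O(\e)$, yields $(q\b, R_{\th_E(\cdot)}+O(\e))$. Using the Diophantine condition on $q\b$, solve the cohomological equation
\[
\psi_E(x+q\b)-\psi_E(x)=\th_E(x)-\bar\th_E,\qquad \bar\th_E:=\int_\T\th_E,
\]
in $C^\infty(\T)$ with a Sobolev loss of $\sim 3$ derivatives and a constant of size $\sim q^2/\e^2$. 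A further conjugation by $R_{\psi_E(x)}\in SO(2)$, which preserves the size of the perturbation, produces a cocycle of the form $(q\b,R_{\bar\th_E}+P_E)$ with $\|P_E\|_{C^s}\lesssim \e$ for some $s$ determined by $s_0$. For $s_0$ large enough this is within the perturbative regime of (the $C^\infty$ version of) Theorem \ref{Eliassongeneral}: for every $E$ such that the rotation number of the $q$-th iterate is Diophantine w.r.t.\ $q\b$, $(q\b,A_E^{(q)})$ is $C^\infty$-reducible to a constant elliptic cocycle $(q\b,C_E)$. Since $E\mapsto \rho(\a,A_E)$ is $C^1$ and uniformly close (as $\e\to 0$) to the non-constant $E\mapsto\rho(p/q,A_E)$, an elementary Fubini/Sard argument (on the open subinterval where $d\rho/dE\neq 0$) produces a positive-measure set $\cE_\a\subset I$ on which this Diophantine rotation number condition holds.

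\emph{Step 3: descending to the original cocycle.} Let $D_E$ realise the reduction of the $q$-th iterate; applied to $(\a,A_E)$ it produces $(\a,\widecheck A_E)$ with $\widecheck A_E^{(q)}\equiv C_E$, hence $\widecheck A_E(x+q\a)=C_E\widecheck A_E(x)C_E^{-1}$. Writing $C_E=U_ER_{\ph_E}U_E^{-1}$ and Fourier expanding $U_E^{-1}\widecheck A_EU_E=\sum_n a_ne^{2\pi i nx}$ in the eigenbasis of $\Ad_{R_{\ph_E}}$ (eigenvalues $1,e^{\pm 2i\ph_E}$), the functional equation forces $a_n^{0}(e^{2\pi inq\a}-1)=0$ and $a_n^\pm(e^{2\pi inq\a}-e^{\pm 2i\ph_E})=0$. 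The Diophantine condition on $q\a$ kills all $n\neq 0$ modes in the invariant part, and the Diophantine rotation number condition produced by Eliasson ($|2\rho-kq\b-l|\geq \kappa/|k|^{c}$) kills the twisted parts; thus $\widecheck A_E$ is constant. As a $q$-th root of the elliptic $C_E$, it is itself elliptic (the non-constancy of $\rho(p/q,A_E)$ excludes $C_E=\pm I$ on $\cE_\a$), so $(\a,A_E)$ is reducible to a constant elliptic cocycle.

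\emph{Main obstacle.} The crux is the balance of constants in Step 2: the factor $\e^{-2}$ coming from the Diophantine constant on $q\b$ must be reabsorbed in the $\e$-smallness of the perturbation, so that the conjugated cocycle remains in the perturbative regime of Eliasson. This is what fixes the absolute number $s_0$ of required derivatives: one needs enough smoothness so that, after the Sobolev losses of Steps 1--2 (diagonalization, cohomological equation), the remainder $P_E$ is still admissible for Eliasson's theorem. A secondary but essential point is that the non-constancy of $E\mapsto\rho(p/q,A_E)$ is indispensable for the Fubini argument producing $\cE_\a$ of positive measure; without it, the rotation number could be locked inside the null set of non-Diophantine values.
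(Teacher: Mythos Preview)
Your outline has the right ingredients but there is a genuine gap in Step~2 which the choice of $s_0$ cannot close. After a single diagonalization and one solution of the cohomological equation you obtain $(q\b,R_{\bar\th_E}+P_E)$ with $\|P_E\|_{C^0}=O(\e)$. The quantitative Eliasson theorem you need (Theorem~\ref{theo:eliasson}) requires $\|A-\hat A\|_{C^0}\leq\gamma^{d_0}\epsilon_0$, where $\gamma$ is the Diophantine constant of the base frequency; since $q\b\in DC_1(c\,\e^2,3)$ this means $\|P_E\|_{C^0}\lesssim\e^{2d_0}$ for a fixed absolute constant $d_0>0$. One power of $\e$ is therefore not enough, and taking $s_0$ large helps only with the $C^{s_0}$ bound, never with the $C^0$ one. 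You correctly flag this obstacle in your final paragraph, but the proposed remedy---more derivatives---does not address it.

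The paper closes this gap by \emph{iterating} the Cheap Trick (Lemmas~\ref{lem:indstep}--\ref{lem:2.6}). Starting from $(\a,e^{F}R_{\ph})$ with $\|F\|=O(\e)$, one re-examines the $q$-th iterate at the rational $p/q$, re-diagonalizes it, and transfers the new conjugacy back to the original cocycle; each pass gains an extra factor of $|\a-p/q|$. After $j$ iterations the remainder satisfies $\|F_j\|=O(\e^{j})$, and $j$ is chosen (depending only on $d_0$) large enough to beat $\e^{2d_0}$. This is what actually fixes $s_0$: each iteration costs one derivative, and a final cohomological equation costs a few more.

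Two further differences are worth noting. First, the paper works with the \emph{original} cocycle throughout: Proposition~\ref{prop:perapprox} conjugates $(p/q,A_E)$ itself---not its $q$-th iterate---to $(p/q,R_{\ph})$, so Eliasson's theorem is applied directly to $(\a,A_E)$ and your Step~3 descent becomes unnecessary. Second, the map $E\mapsto\rho(\a,A_E)$ is in general only continuous, not $C^1$, so a Sard-type argument is unavailable; the paper instead uses that $\rho(\a,\cdot)$ is Lipschitz \emph{at} every $E$ where the cocycle is reducible to an elliptic constant (Lemma~\ref{lem:var2}), together with the measure-theoretic Lemma~\ref{lem:3.9} showing that the preimage of a positive-measure set under such a map still has positive measure.
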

\subsection{Periodic approximations}
Let $R_{\ph}$ be the matrix $$R_{\ph}:=\bm \cos \ph & -\sin\ph\\ \sin \ph&\cos\ph\em.$$
\begin{prop}[Periodic approximation]\label{prop:perapprox}If $A:\T\to SL(2,\R)$ is  homotopic to the identity, smooth and satisfies ${\rm tr}|\prod_{k=q-1}^0 A(\cdot+kp/q)(\cdot)|<2$, then there exist smooth maps  $B:\T\to SL(2,\R)$ and  $\ph:\T\to\R$  such that  
$B(\cdot+\frac{p}{q})^{-1}A(\cdot)B(\cdot)=R_{\ph(\cdot)}$.
\end{prop}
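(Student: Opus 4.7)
The plan is to construct $B$ as the canonical ``positive square root of the invariant quadratic form'' of the iterated matrix $M(x) := \prod_{k=q-1}^{0} A(x+kp/q)$, and then to use the uniqueness of this construction to force the desired cocycle identity. First, the hypothesis $|\tr M(x)|<2$ implies that $M(x)$ is elliptic and distinct from $\pm I$ for every $x$, so the linear action $Q \mapsto M(x)^T Q M(x)$ on the three-dimensional space of symmetric $2\times 2$ matrices has $1$ as a simple eigenvalue, with eigenspace spanned by a positive definite form. Normalizing this form to have determinant one selects a unique positive definite symmetric $Q(x)\in SL(2,\R)$, and I set $B(x):=Q(x)^{1/2}$. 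Smooth dependence of $B$ on $x$ follows from the implicit function theorem applied to this simple eigenvalue, and by construction $B(x)^{-1} M(x) B(x)\in SO(2)$ for every $x$.

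The next step is to verify that $B$ conjugates the base cocycle itself into $SO(2)$. The key ingredient is the telescoping relation
$$M(x+p/q) = A(x)\,M(x)\,A(x)^{-1},$$
which holds because $A$ is $1$-periodic in $x$. It implies that $A(x)^{-T} B(x)^{-2} A(x)^{-1}$ is a positive definite, determinant-one, $M(x+p/q)$-invariant form, so by the uniqueness established above it must equal $B(x+p/q)^{-2}$. Equivalently, $B(x)^{-2}=A(x)^T B(x+p/q)^{-2}A(x)$, from which a short computation using the symmetry of $B$ gives $\bigl(B(x+p/q)^{-1} A(x) B(x)\bigr)^T\bigl(B(x+p/q)^{-1} A(x) B(x)\bigr)=I$, so this product lies in $SO(2)$ (its determinant being $1$). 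Writing it as $R_{\ti\ph(x)}$ produces a smooth angle $\ti\ph:\T\to\R/2\pi\Z$.

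What remains is to upgrade $\ti\ph$ to an $\R$-valued smooth function, equivalently to show that the winding number of $x\mapsto R_{\ti\ph(x)}$ vanishes. Since the inclusion $SO(2)\hookrightarrow SL(2,\R)$ induces an isomorphism on $\pi_1$ and the latter group is abelian, the homotopy class of $R_{\ti\ph}$ in $\pi_1$ equals $[A]\in\pi_1(SL(2,\R))$, which vanishes by the hypothesis that $A$ is homotopic to the identity; a standard covering-space argument then yields the desired smooth lift $\ph:\T\to\R$. The most delicate point in the whole argument will be the smoothness of the canonical square root $B(x)$, but this is routine once one observes that the eigenvalue $1$ is simple precisely because $M(x)\ne\pm I$.
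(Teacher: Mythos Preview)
Your argument is correct in substance and takes a different, somewhat cleaner route than the paper. The paper first invokes a general lemma (its Lemma~\ref{app:0}) producing \emph{some} smooth periodic $B$ with $B^{-1}A^{(q)}B=R_{a}$, $a(\cdot)\in(0,\pi)$; the proof of that lemma is not canonical and requires solving a cohomological equation on $\T$ to force periodicity of $B$. It then uses a second lemma (its Lemma~\ref{app:1}) asserting that any $SL(2,\R)$ element conjugating $R_{a(x)}$ to $R_{a(x+p/q)}$ must itself be a rotation, provided $a(x)+a(x+p/q)\notin 2\pi\Z$, to conclude. Your construction via the unique $M$-invariant positive quadratic form is canonical from the outset, so periodicity of $B$ is automatic and the cocycle identity $B(\cdot+p/q)^{-1}A(\cdot)B(\cdot)\in SO(2)$ drops out of uniqueness without any auxiliary lemma. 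Your handling of the homotopy class at the end is also a bit more transparent than the paper's (and in fact your $B$, being symmetric positive definite, lies in a contractible subset of $SL(2,\R)$, so its contribution to $\pi_{1}$ vanishes trivially).

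One small slip to fix: with $Q$ defined by $M^{T}QM=Q$ and $B=Q^{1/2}$ symmetric, the orthogonality check gives $(B^{-1}MB)^{T}(B^{-1}MB)=BM^{T}B^{-2}MB$, and for this to equal $I$ one needs $M^{T}B^{-2}M=B^{-2}$, i.e.\ $B^{-2}$ (not $B^{2}$) must be the invariant form. So you should set $B:=Q^{-1/2}$ rather than $Q^{1/2}$. With that single change all your subsequent formulas---in particular $A(x)^{-T}B(x)^{-2}A(x)^{-1}=B(x+p/q)^{-2}$ and the final orthogonality computation---become correct as written.
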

\begin{proof}We notice that if we denote $A^{(q)}(\cdot)=A(\cdot+(q-1)p/q)\cdots A(\cdot)$ then 
\be A^{(q)}(\cdot+\frac{p}{q})=A(\cdot)A^{(q)}(\cdot)A(\cdot)^{-1}.\label{eq1}\ee 
By assumption, for all $x\in\T$ one has $|{\rm tr}A^{(q)}(x)|<2$ so by Lemma \ref{app:0} there exist  smooth maps $B:\T\to SL(2,\R)$, $a:\R\to ]0,\pi[$ such that 
$$\forall x\in\T, \ B(\cdot)^{-1}A^{(q)}(x)B(x)=R_{a(x)}$$
and from (\ref{eq1}) we get 
$$R_{a(\cdot+p/q)}=\biggl(B(\cdot+\frac{p}{q})^{-1}A(\cdot)B(x)\biggr)R_{a(\cdot)}\biggl(B(\cdot+\frac{p}{q})^{-1}A(\cdot)B(x)\biggr)^{-1}.$$
But since for any $x\in\T$, $0<a(x)+a(x+p/q)<2\pi$ and $A$ is homotopic to the identiy, Lemma \ref{app:1} applies and we get the existence of a smooth $\ph:\T\to \R$ such that 
$$B(\cdot+\frac{p}{q})^{-1}A(\cdot)B(x)=R_{\ph(\cdot)}.$$
\end{proof}

\subsection{Quantitative Eliasson Theorem}\label{sec:eliassoncinfty}

Let $\kappa>0$. We say that $\rho\in\R$ is in $DS_{\a}(\kappa)$ if 
$$\forall\ k\in\Z^*, \ \min_{l\in\Z}|2\rho-k\alpha-l|\geq \frac{\kappa}{|k|^2}$$ and we set $DS_{\a}=\cup_{\kappa>0}DS_{\a}(\kappa)$. Elements of $DS_{\a}$ are said to be {\it diophantine with respect to $\a$}. It is easy to check that $DS_{\a}$ is a set of full Lebesgue measure. 

We recall the following quantitative  version of Eliasson's theorem proved in \cite{FK}.

\begin{theo} \label{theo:eliasson}There exist two  constants $C_1,C_2>0$ such that the following holds. Let $\hat{A} \in {\rm SL}(2,\R)$ and  $\sigma>0$, be fixed.  Then there exists
$\epsilon(\sigma,\hat{A})>0$   such that if a cocycle $(\a,A) \in  \R \times C^\infty(\T, SL(2,\R))$ satisfies 
\begin{itemize}

\item[(1)]  $\a \in {\rm DC}_{1}(\gamma,\sigma)$; 

\item[(2)] $\rho(\a,A)$ is Diophantine with respect to $\a$;

\item[(3)]  ${\|A - \hat{A}\|}_{C^{0}} \leq \gamma^{d_0}\epsilon$ and ${\|A - \hat{A}\|}_{C^{s_0}} \leq 1$ , where $d_0=C_1\sigma$, $s_0=[C_2 \sigma]$

\end{itemize}
then $(\a,A)$ is $C^\infty$ reducible (conjugated to a constant elliptic cocycle if $\rho(\a,A)\ne 0$). 
\end{theo}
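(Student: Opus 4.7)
The plan is to prove this quantitative Eliasson theorem via a KAM iterative scheme in the $C^\infty$ category, adapting the analytic argument of \cite{El-cmp} to the smooth setting as in \cite{FK}. The key bookkeeping is that smallness is measured only in $C^0$ (of order $\gamma^{d_0}\epsilon$), while the $C^{s_0}$ bound of order one provides just enough regularity to survive the loss of derivatives inflicted at each step by the Diophantine small divisors. If $\hat A$ is hyperbolic there is nothing to do since uniform hyperbolicity is $C^0$-open. Otherwise, after conjugating by a fixed matrix in $SL(2,\R)$, we may assume $\hat A = R_{\theta_0}$, and write $A(x) = R_{\theta_0}\exp(F_0(x))$ with $F_0 \in C^\infty(\T, sl(2,\R))$ of $C^0$-size $\gamma^{d_0}\epsilon$.

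The iterative step: given $A_n(x) = R_{\theta_n}\exp(F_n(x))$, pick a truncation scale $N_n$ growing geometrically, a small conjugacy $B_n(x) = \exp(Y_n(x))$, and a rotation shift $\delta\theta_n$ so that the linearized cohomological equation
\be
Y_n(x+\a) - \Ad(R_{\theta_n})Y_n(x) \;=\; \Pi_{N_n}F_n(x) - (\delta\theta_n)\,J
\label{eq:cohomkam}
\ee
holds, where $\Pi_{N_n}$ truncates Fourier modes of size $\leq N_n$ and $J$ generates $so(2)$. Decomposing $sl(2,\R)\otimes\C$ into $\Ad(R_{\theta_n})$-eigenspaces yields Fourier-side divisors $e^{2\pi ik\a}-1$ on the $so(2)$-component (controlled by $\a \in DC_1(\gamma,\sigma)$) and $e^{2\pi i(k\a\pm 2\theta_n)}-1$ on the complementary component, controlled -- provided $\theta_n$ stays close to $\pi\rho(\a,A)$ -- by the hypothesis that $\rho(\a,A)$ is Diophantine with respect to $\a$. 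Resonant diagonal modes ($k=0$) are absorbed into $\delta\theta_n$, while the off-diagonal near-resonant modes, quantitatively few per stage, are handled by a short resonance-absorbing substep that incorporates them into a finite adjustment of $\theta_n$.

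After conjugating by $B_n$, the new error satisfies a quadratic-in-$\|F_n\|_{C^0}$ estimate plus a smoothing tail $N_n^{-s_0}\|F_n\|_{C^{s_0}}$, while $\|F_{n+1}\|_{C^{s_0}}$ grows at most polynomially in $N_n$. Choosing $d_0 = C_1\sigma$ and $s_0 = \lfloor C_2\sigma\rfloor$ with $C_1, C_2$ large enough -- dictated by the loss-of-regularity exponents of the cohomological inversions and the Moser-type interpolation between $C^0$ and $C^{s_0}$ -- produces super-exponential decay of $\|F_n\|_{C^0}$ with simultaneous uniform boundedness of every $\|F_n\|_{C^k}$. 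Interpolation then gives convergence of $B_1 B_2\cdots B_n$ in every $C^k$, and the limit conjugates $(\a, A)$ to the constant cocycle $(\a, R_{\theta_\infty})$, where $\theta_\infty/(2\pi)\equiv \rho(\a,A)\pmod{\Z}$; this constant is elliptic whenever $\rho(\a,A)\not\equiv 0\pmod{\Z}$.

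The main obstacle is controlling the drift of the rotation parameter $\theta_n$ across the whole scheme: the Diophantine condition on $\rho(\a,A)$ bounds below the divisors involving $2\pi\rho(\a,A)$, whereas the actual divisors in \eqref{eq:cohomkam} involve $2\theta_n$. One must therefore show $|2\theta_n - 2\pi\rho(\a,A)|$ remains much smaller than the inverse Diophantine loss at every step, which demands delicate resonant bookkeeping and forces the precise dependence of $d_0, s_0$ on $\sigma$. A secondary technical load is managing the $C^\infty$ (rather than analytic) smoothing balance, ensuring that the cumulative regularity loss accumulated over the infinitely many KAM steps is dominated by $s_0$: this is where the constant $C_2$ enters, and is the place where the quantitative sharpness of the theorem lives.
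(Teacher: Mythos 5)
The paper does not prove this statement: Theorem \ref{theo:eliasson} is quoted verbatim as a result established in \cite{FK} (itself a smooth-category adaptation of \cite{El-cmp}), so there is no in-paper argument to compare against. Your outline is indeed the strategy of that source --- a KAM scheme with $C^0$-smallness, $C^{s_0}$-boundedness, truncated cohomological equations, and Moser-type interpolation --- so you are pointing in the right direction.

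That said, as a proof your proposal has a genuine gap precisely where the theorem is hard, and you partly acknowledge it. The ``short resonance-absorbing substep'' is the crux of Eliasson's argument, not a footnote: when $2\theta_n$ is $N_n$-resonant, i.e.\ close to $k\a \bmod 1$ for some $0<|k|\leq N_n$, the remedy is to conjugate by $R_{k x/2}$, which is why the conjugacy $B$ in the theorem lives on $\R/2\Z$ rather than $\T$. This conjugation is \emph{not} close to the identity, it shifts the fibered rotation number by $k\a/2$, and one must show (using $\rho(\a,A)\in DS_\a$ together with $\a\in DC_1(\gamma,\sigma)$) that such resonances occur at most once per scale, that the accumulated large conjugations do not destroy the quadratic convergence, and that the $C^{s_0}$-norms survive the multiplication by $R_{kx/2}$ with $|k|\leq N_n$. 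Consequently your assertion that the limit constant has angle $\theta_\infty/(2\pi)\equiv\rho(\a,A)\pmod\Z$ is not correct as stated: the limit rotation number is $\rho(\a,A)-m\a/2 \bmod 1$ for some integer $m$ recording the resonant steps, and it is the Diophantine condition on $\rho(\a,A)$ with respect to $\a$ that guarantees this quantity is nonzero, hence ellipticity. Finally, the derivative-loss bookkeeping that actually produces the constants $C_1,C_2$ (and hence $d_0=C_1\sigma$, $s_0=[C_2\sigma]$) is exactly the quantitative content of the theorem and is deferred rather than carried out; without it the statement proved is qualitative reducibility, not the theorem as formulated.
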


\bigskip\noindent{\bf Notation:} Till the end of Section \ref{sec:2} we denote the $C^s$-norm $\|\cdot\|_{C^s}$ by $\|\cdot\|_{s}$.

\subsection{The Cheap Trick}

{We denote by $s_{0}=[3C_{2}]$ where $C_{2}$ is the constant of Theorem \ref{theo:eliasson}.}
The aim of this section is to prove the following proposition. 

\begin{prop}[Cheap trick] \label{prop:main}Let $p/q$ be a rational number and $A:\T\to SL(2,\R)$ such that $2-\d:=\max_{\T}|{\rm tr}(\prod_{k=q-1}^0 A(\cdot+kp/q))|<2$. Then, there exists  $\eta_{0}(\|A\|_{C^{s_{0}}},q,\d)>0$ such that for any $0<\eta\leq \eta_{0}$, and any $\a\in D_{p/q}(\e)$,  there exists $B\in C^\infty(\R/2\Z,SL(2,\R))$  such that $ B(\cdot+\a)^{-1}A(\cdot)B(\cdot)$ is a constant matrix in $SO(2,\R)$  
provided   $\rho(\a,A)\in DS_{\a}\setminus\{0\}$.
\end{prop}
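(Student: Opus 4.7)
The plan is to conjugate $(\alpha,A)$ into a small perturbation of a constant $SO(2,\R)$-valued cocycle and then to invoke the quantitative version of Eliasson's theorem (Theorem \ref{theo:eliasson}).

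First I would apply the periodic approximation (Proposition \ref{prop:perapprox}): the hypothesis $\max_x|\tr A^{(q)}(x)|\le 2-\delta<2$ furnishes smooth maps $B_1:\T\to SL(2,\R)$ and $\varphi:\T\to\R$ with $B_1(x+p/q)^{-1}A(x)B_1(x)=R_{\varphi(x)}$, with $\|B_1\|_{C^s}$ and $\|\varphi\|_{C^s}$ controlled in terms of $\|A\|_{C^{s+O(1)}}$, $q$ and $\delta$. Writing $\alpha=p/q+\beta$ with $|\beta|<\eta$ and reinterpreting the same conjugation over the rotation $\alpha$,
$$B_1(x+\alpha)^{-1}A(x)B_1(x)=C(x)\,R_{\varphi(x)},\qquad C(x):=B_1(x+\alpha)^{-1}B_1(x+p/q),$$
and a Taylor expansion in $\beta$ gives $\|C-I\|_{C^s}\le K_s|\beta|$ for each $s$.

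I would next exploit the abelian structure of $SO(2,\R)$ to reduce $R_\varphi$ to the constant rotation $R_{\bar\varphi}$, where $\bar\varphi:=\int_\T\varphi\,dx$. This amounts to solving the scalar cohomological equation $\psi(x+\alpha)-\psi(x)=\bar\varphi-\varphi(x)$; since $\alpha\in DC_1(\eta^2,3)$, Fourier inversion produces a smooth $\psi$ with $\|\psi\|_{C^s}\le K'_s\eta^{-2}\|\varphi\|_{C^{s+3}}$. Lifting $\psi$ to $\R/2\Z$ if necessary to accommodate the half-period ambiguity of $R_{\bar\varphi}$, set $\tilde B=B_1R_\psi$; then
$$\tilde B(x+\alpha)^{-1}A(x)\tilde B(x)=\tilde C(x)\,R_{\bar\varphi},\qquad \tilde C(x):=R_{\psi(x+\alpha)}C(x)R_{\psi(x+\alpha)}^{-1}.$$
Because $R_\psi$ is orthogonal, $\|\tilde C-I\|_{C^0}=\|C-I\|_{C^0}=O(|\beta|)$, while the Leibniz rule combined with the bound on $\|\psi\|_{C^{s_0}}$ yields $\|\tilde C-I\|_{C^{s_0}}\lesssim |\beta|\,\eta^{-c s_0}$ for some absolute $c$. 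Finally I would invoke Theorem \ref{theo:eliasson} with $\hat A=R_{\bar\varphi}$, $\gamma=\eta^2$ and $\sigma=3$: its conditions (1) and (2) are built into the proposition's hypotheses, and the smallness (3), $\|\tilde CR_{\bar\varphi}-R_{\bar\varphi}\|_{C^0}\le\eta^{2d_0}\epsilon(R_{\bar\varphi})$ and $\|\cdot\|_{C^{s_0}}\le 1$, is enforced by choosing $\eta_0=\eta_0(\|A\|_{C^{s_0}},q,\delta)$ small enough. Eliasson's theorem then delivers a $C^\infty$-conjugation of $(\alpha,A)$ to a constant elliptic cocycle; since $\rho(\alpha,A)\neq 0$, this constant is not $\pm I$, and a further constant conjugation places it in $SO(2,\R)$.

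The main obstacle is reconciling the $\beta$-smallness of the perturbation produced in the periodic conjugation with the $\eta^{-O(s_0)}$-amplification of high derivatives introduced when solving the small-divisor cohomological equation. What saves the scheme is precisely that the Diophantine-dependent conjugation is $SO(2)$-valued and hence orthogonal: it preserves the $C^0$-size of the perturbation, so only the $C^{s_0}$-norm is inflated, and this inflation is absorbed by taking $\eta_0$ small enough to fit within Eliasson's smallness threshold.
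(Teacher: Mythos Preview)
Your scheme has a genuine quantitative gap at the last step: the perturbation you hand to Theorem~\ref{theo:eliasson} is not small enough. After conjugating by $\tilde B=B_1R_\psi$ you correctly obtain $\|\tilde C-I\|_{C^0}=\|C-I\|_{C^0}\lesssim|\beta|<\eta$. But $\alpha\in D_{p/q}(\eta)\subset DC_1(\eta^2,3)$, so the Diophantine parameter in Theorem~\ref{theo:eliasson} is $\gamma=\eta^2$, and condition~(3) asks for $\|\tilde C R_{\bar\varphi}-R_{\bar\varphi}\|_{C^0}\le\gamma^{d_0}\epsilon=\eta^{2d_0}\epsilon$ with $d_0=3C_1$. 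You therefore need $K_0\eta\le\eta^{2d_0}\epsilon$, and since $2d_0>1$ this inequality becomes \emph{harder} to satisfy as $\eta\to 0$: shrinking $\eta_0$ moves you away from the threshold, not towards it. The $C^{s_0}$ condition fails for the same reason. From $\|\psi\|_{C^s}\lesssim\eta^{-2}$ and Fa\`a di Bruno one gets $\|\partial^k R_{\psi}\|_{C^0}\lesssim\eta^{-2k}$ (the term $(\psi')^k$ already forces this), hence $\|\tilde C-I\|_{C^{s_0}}\lesssim|\beta|\,\eta^{-2s_0}\le\eta^{1-2s_0}\to\infty$. The orthogonality of $R_\psi$ protects only the $C^0$ norm, and even that norm is one power of $\eta$ when $2d_0$ powers are required.

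What is missing is an iteration that trades derivatives for extra powers of $|\alpha-p/q|$ \emph{before} the cohomological equation is solved. The paper does this in Lemmas~\ref{lem:indstep}--\ref{lem:2.6}: writing the current cocycle as $(\alpha,e^{F}R_{\varphi})$ with $\|F\|_r$ small, one goes back to the \emph{rational} frequency, where $(p/q,e^{F}R_{\varphi})^q$ is again totally elliptic and Proposition~\ref{prop:perapprox} conjugates $(p/q,e^{F}R_{\varphi})$ \emph{exactly} to a rotation cocycle $(p/q,R_{\tilde\varphi})$ by some $e^{Y}$ with $\|Y\|_r\lesssim\|F\|_r$. Rewriting the same conjugacy over $\alpha$ produces a new error $\tilde F$ satisfying $\|\tilde F\|_{r-1}\lesssim\|Y\|_r\,|\alpha-p/q|\lesssim\|F\|_r\,\eta$: one full factor of $\eta$ is gained at the cost of one derivative. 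After $j$ iterations the error is $O(\eta^{j+1})$ in $C^{r-j}$; only then does one solve the scalar cohomological equation (paying $\eta^{-2}$) and apply Theorem~\ref{theo:eliasson}. Taking $j$ of order $d_0$ and starting from $r\approx s_0+j+5$ derivatives makes both bounds in condition~(3) hold. Your argument is essentially the paper's final step (Corollary~\ref{cor:main}) with the iteration omitted; restoring it would complete the proof.
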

\begin{proof}
The proof of the proposition is a consequence of Corollary \ref{cor:main} below which itself is deduced from the following two lemmas.
\begin{lem}[Inductive step]\label{lem:indstep}Let $\ph:\T\to\R$ be smooth and assume that $\psi(\cdot):=\sum_{k=0}^{q-1}\ph(\cdot+kp/q)$ satisfies 
$\d:= \min_{x\in\R,k\in\Z}|\psi(x)-k\pi|>0$. 
Then, given $r\in\N$,  there exists $\nu_{0}>0$ such that for  any $\a\in \T$ and any $F\in C^\infty(\T,sl(2,\R))$ such that $\|F\|_{r}\leq \nu_{0}$, the following is true: there exist $\ti \ph\in C^\infty(\T,\R)$, $\ti F,Y\in C^\infty(\T,sl(2,\R))$, such that 
$$(0,e^{-Y(\cdot)})\circ (\a,e^{F(\cdot)}R_{\ph(\cdot)})\circ (0,e^{Y(\cdot)})=(\a,e^{\ti F(\cdot)}R_{\ti\ph(\cdot)})$$
with
\begin{align}&\|\ti\ph-\ph\|_{r}\leq C^{(3)}(r,\|\ph\|_{r},\d)\|F\|_{r} \\ &\|Y\|_{r}\leq C^{(3)}(r,\|\ph\|_{r},\d)\|F\|_{r} ,\\  &\|\ti F\|_{r-1}\leq C^{(3)}(r,\|\ph\|_{r},\d)\|F\|_{r}  |\a-\frac{p}{q}|. \end{align}
where $C^{(3)}(r,s,t)$ is a positive non-decreasing  function of   $r$, $s$ and $t^{-1}$.
\end{lem}
\begin{proof}We set $\nu:=\|F\|_{r}$. Let us consider the cocycle $(p/q,e^{F(\cdot)}R_{\ph(\cdot)})$. Its $q$-th iterate $(p/q,e^{F(\cdot)}R_{\ph(\cdot)})^q$ is of the form $(0,e^{G(\cdot)}R_{\psi(\cdot)})$ where $\psi(\cdot)=\sum_{k=0}^{q-1}\ph(\cdot+kp/q)$ and $G:\T\to\ sl(2,\R)$ satisfies  (as can be seen from Lemma 9 of \cite{FK}): for any $0\leq j\leq r$ 
$$\|G\|_{j} \leq K_{r}q^{j+1}(1+\nu)^{q-2}(1+\|\ph\|_{j})\nu\leq C_{r}(q,\|\ph\|_{j})\nu.$$
Using Lemma \ref{app:2} we deduce that there exists $\ti\psi$ and  such that $e^{G(\cdot)}R_{\psi(\cdot)}=e^{Y(\cdot)}R_{\ti\psi(\cdot)}e^{-Y(\cdot)}$  or equivalently $(p/q,e^FR_{\ph})^q=(0,e^{Y})\circ (0,R_{\ti \psi})\circ (0,e^{-Y})$ where 
$$\|Y\|_{r}\leq C^{(1)}(r,\|\ph\|_{r},\d)\nu,\qquad (\text{and}\  \|\ti\psi-\psi\|_{r}\leq C^{(1)}(r,\|\ph\|_{r},\d)\nu).$$
From Proposition \ref{prop:perapprox} we see that there exists  a smooth $\ti \ph:\T\to\R$ such that 
$$(0,e^{-Y})\circ (p/q,e^FR_{\ph})\circ (0,e^{Y})=(p/q,R_{\ti \ph}).$$
From the previous  identity
\be e^{-Y(\cdot+p/q)}e^{F(\cdot)}R_{\ph(\cdot)}e^{Y(\cdot)}=R_{\ti\ph(\cdot)}\label{eq:2}\ee
 and from  the inequalities $\|e^{\pm Y}-I\|_{r}\leq C_{r}(\|Y\|_{0})\|Y\|_{r}$ and $\|e^{F}-I\|_{r}\leq C_{r}(\|F\|_{0})\|F\|_{r}$ we deduce that 
\begin{align*}\|\ti \ph-\ph\|_{r}&\leq \max(C_{r}(\|Y\|_{0}),\|F\|_{0})(\|Y\|_{r}+\|F\|_{r})\\
&\leq C^{(2)}(r,\|\ph\|_{r},\d)\nu\end{align*}
Now, from equation (\ref{eq:2}) we get 
\begin{align} e^{-Y(\cdot+\a)}e^{F(\cdot)}R_{\ph(\cdot)}e^{Y(\cdot)}&=e^{-Y(\cdot+\a)}e^{Y(\cdot+p/q)}R_{\ti\ph(\cdot)}\label{eq:3}\\ 
&:=e^{\ti F(\cdot)}R_{\ti\ph(\cdot)}\end{align}
whith
$$\|\ti F\|_{r-1}\leq C_{r}(\|Y\|_{0})\|Y\|_{r}|\a-\frac{p}{q}|.$$
\end{proof}

\begin{lem}\label{lem:2.6}Under the assumptions of the preceding lemma, given $r\in\N$,  there exists $\nu_{1}>0$ such that for any $\a\in \T$ and any $F\in C^\infty(\T,sl(2,\R))$ such that $\|F\|_{r}\leq \nu_{1}$ the following is true: for any $0\leq j\leq r$, there exist $\ph_{j}\in C^\infty(\T,\R)$, $ F_{j},Z_{j}\in C^\infty(\T,sl(2,\R))$, such that 
$$(0,e^{-Z_{j}(\cdot)}) \circ (\a,e^{F(\cdot)}R_{\ph(\cdot)})\circ (0,e^{Z_{j}(\cdot)})=(\a,e^{ F_{j}(\cdot)}R_{\ph_{j}(\cdot)})$$
with
\begin{align}&\|\ph_{j}-\ph\|_{r-j+1}\leq C^{(3)}(r,\|\ph\|_{r},\d)\|F\|_{r} \\ &\|Z_{j}\|_{r-j+1}\leq C^{(3)}(r,\|\ph\|_{r},\d)\|F\|_{r} ,\\  &\|F_{j}\|_{r-j}\leq C^{(3)}(r,\|\ph\|_{r},\d)\|F\|_{r}  |\a-\frac{p}{q}|^j. \end{align}
where $C^{(3)}(r,s,t)$ is a positive non-decreasing  function of   $r$, $s$ and $t^{-1}$.
\end{lem}
\begin{proof}Just iterate the previous Lemma \ref{lem:indstep} and check that the inductively defined sequence $F_{j}$ satisfies $\|F_{j}\|_{r-j}\leq \nu_{0}$ (the $\nu_{0}$ of Lemma \ref{lem:indstep}).
\end{proof}
\begin{cor}\label{cor:main}Let $p/q$ be a rational number,  $s,m\in\N$, and $A:\T\to SL(2,\R)$ such that $2-\d:=\max_{\T}|{\rm tr}(\prod_{k=q-1}^0 A(\cdot+kp/q))|<2$. Then, there exist $\e_{0}(s,m,\|A\|_{s+m+7}, q,\d)>0$ such that for any $0<\e\leq \e_{0}$, and any $\a\in D_{p/q}(\e)$  there exist $B\in C^\infty(\T,SL(2,\R))$ and $A_{0}\in SO(2,\R)$ such that 
\be \|B(\cdot+\a)^{-1}A(\cdot)B(\cdot)-A_{0}\|_{s}\leq \e^{m}.\ee 
and $$\|B\|_{s}\leq \e^{-3}.$$
\end{cor}
\begin{proof}Let us define $r=s+m+7$ and $j=m+2$. By Proposition \ref{prop:perapprox} there exist $B_{1}\in C^\infty(\T,SL(2,\R))$ and $\ph  \in C^\infty (\T,\R)$ such that $B_{1}(\cdot+p/q)^{-1}A(\cdot)B_{1}(\cdot)=R_{\ph(\cdot)}$ and thus $B_{1}(\cdot+\a)^{-1}A(\cdot)B_{1}(\cdot)=e^{F(\cdot)}R_{\ph(\cdot)}$ with $\|F\|_{r-1}\leq C_{4}\times |\a-(p/q)|$ where the constant  $C_{4}$ depends  indeed on $\|B_{1}\|_{r}$, $r$ and $\d$. Since  $|\cos(\sum_{k=0}^{q-1}\ph(\cdot+kp/q))|=|{\rm tr}(\prod_{k=q-1}^0 A(\cdot+kp/q))|$,  we see that $\min_{\T,l\in\Z}|(\sum_{k=0}^{q-1}\ph(\cdot+k\a))-l\pi|\geq\d$ and we can apply Lemma  \ref{lem:2.6} to $(\a,e^{F}R_{\ph})$ (with $r-1$ in place of $r$): if $C_{4}\e_{0}<\nu_{1}$ then $\|F\|_{r-1}\leq C_{4}|\a-(p/q)|\leq C_{4}\e\leq \nu_{1}$ and  there exist $\ph_{j}\in C^\infty(\T,\R)$, $ F_{j},Z_{j}\in C^\infty(\T,sl(2,\R))$, such that 
$$(0,e^{-Z_{j}(\cdot)}) \circ (\a,e^{F(\cdot)}R_{\ph(\cdot)})\circ (0,e^{Z_{j}(\cdot)})=(\a,e^{ F_{j}(\cdot)}R_{\ph_{j}(\cdot)})$$
with
\begin{align}&\|\ph_{j}-\ph\|_{r-j}\leq C^{(3)}(r,\|\ph\|_{r},\d)C_{4}\e \label{eq:2.14}\\ &\|Z_{j}\|_{r-j}\leq C^{(3)}(r,\|\ph\|_{r},\d)C_{4}\e ,\\  &\|F_{j}\|_{r-j-1}\leq C^{(3)}(r,\|\ph\|_{r},\d)(C_{4}\e)^{j+1} . \end{align}
From (\ref{eq:2.14}) we see that $\|\ph_{j}\|_{r-j}\leq 2 C^{(3)}(r,\|\ph\|_{r},\d)$ provided $C_{4}\e\leq 1$ and since $\a\in D_{p/q}(\e)\subset DC(\eta^2,3)$, there exist $\th(\cdot)\in C^\infty(\T,\R)$ such that the following cohomological equation is satisfied
$$\ph_{j}(\cdot)=\th(\cdot+\a)-\th(\cdot)+\int_{\T}\ph_{j}dx$$ with the estimate 
$$\|\th\|_{r-j-4}\leq \e^{-2} \|\ph_{j}\|_{r-j}\leq  2 \e^{-2} C^{(3)}(r,\|\ph\|_{r},\d).$$
As a consequence,  if we define $B_{j}(\cdot)=e^{Z_{j}(\cdot)}R_{\th(\cdot)}$ and  $\th_{0}:=\int_{\T}\ph_{j}(x)dx$
$$(0,B_{j}(\cdot))^{-1} \circ (\a,e^{F(\cdot)}R_{\ph(\cdot)})\circ (0,B_{j}(\cdot))=(\a,e^{ \ti F_{j}(\cdot)}R_{\th_{0}})$$
with 
$$\|\ti F_j\|_{r-j-4}\leq C^{(4)}(r,\|\ph\|_{r},\d)\e^{j-1}$$ 
and 
$$\|B_{j}\|_{r-j-4}\leq C^{(4)}(r,\|\ph\|_{r},\d)\e^{-2}$$ 
which is the desired conclusion if $\e$ is small enough.
\end{proof}

We can now prove Proposition  \ref{prop:main}.  Using  the notations of Theorem \ref{theo:eliasson} we  choose $\sigma=3$,  $s=s_{0}=[C_{2}\sigma]=[3C_{2}]$, $d_{0}=C_{1}\sigma=3C_{1}$, $m=d_{0}+1$, and apply successively   Corollary \ref{cor:main} and  Theorem \ref{theo:eliasson}.

\end{proof}

\subsection{Variation of the rotation number}\label{sec:varrotnub}
The following lemmas are proved in the Appendix.

\begin{lem}\label{lem:var1}If $I\subset \R$  is an interval and   $\cA:I\times \T\to SL(2,\R)$, $(E,x)\mapsto A_{E}(x)$ is continuous  and homotopic to the identity, then the map $\T\times I\to\R$, $(\a,E)\mapsto \rho(\a,A_{E})$ is continuous.
\end{lem}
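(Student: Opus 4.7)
The lemma reduces to the classical joint continuity of the fibered rotation number in $(\alpha, A)$. Since $\cA$ is continuous on the compact set $I\times\T$, the assignment $E\mapsto A_E$ is continuous from $I$ into $C^0(\T, SL(2,\R))$, so it suffices to prove joint continuity of the map $(\alpha, A)\in \T\times C^0(\T,SL(2,\R))_{\mathrm{id}}\mapsto\bar\rho(\alpha, A)$, where the subscript singles out maps homotopic to the identity. The plan is to realise $\bar\rho$ as a uniform limit of continuous functionals via an almost-subadditivity estimate.

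For each such $(\alpha, A)$, let $g=g_{(\alpha, A)}:\T\times\R\to\R$ denote the continuous degree-one lift of the projective action (so $g(x, y+1)=g(x, y)+1$), chosen with a fixed normalisation so that $(\alpha, A)\mapsto g$ is continuous in the $C^0$ topology. Put $\Phi_n(x, y):=g_n(x, y)-y$; the degree-one property forces $y\mapsto \Phi_n(x,y)$ to have oscillation at most $1$ on any unit interval, uniformly in $x$ and $n$. I would then introduce the continuous functionals
\begin{equation*}
a_n(\alpha, A):=\int_\T\int_0^1 \Phi_n^{(\alpha, A)}(x, y)\, dy\, dx.
\end{equation*}

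The crucial step is the almost-subadditivity $|a_{m+n}-a_m-a_n|\leq 1$. Using the cocycle identity $\Phi_{m+n}(x, y)=\Phi_m(x, y)+\Phi_n(x+m\alpha, g_m(x, y))$, the change of variables $z=g_m(x, y)$ pushes $dy|_{[0,1]}$ forward to a probability measure on an interval of length $1$; since $\Phi_n(x+m\alpha,\cdot)$ is $1$-periodic with oscillation $\leq 1$, $\int_0^1\Phi_n(x+m\alpha, g_m(x, y))\, dy$ differs from $\int_0^1\Phi_n(x+m\alpha, z)\, dz$ by at most $1$, uniformly in $x$. Integrating in $x$ and using translation invariance of Lebesgue on $\T$ closes the estimate. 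A two-sided Fekete argument then yields $a_n/n\to \ell(\alpha, A)$ uniformly in $(\alpha, A)$ with rate $O(1/n)$, so the limit $\ell$ is continuous as a uniform limit of continuous functions.

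It remains to check $\ell=\bar\rho$, and then to pass to residues modulo $1$. For irrational $\alpha$, the paper's definition of $\bar\rho$ gives $(1/n)\Phi_n(x, y)\to\bar\rho(\alpha, A)$ pointwise, while $|(1/n)\Phi_n|\leq\|\varphi_{(\alpha,A)}\|_{C^0}$ is uniformly bounded, so dominated convergence yields $a_n/n\to\bar\rho(\alpha, A)$. For rational $\alpha=p/q$, the identity $\Phi_{mq}^{(p/q, A)}=\Phi_m^{(0, A^{(q)})}$ reduces matters to $\alpha=0$, where at fixed $x$ the quotient $(1/n)(g_n^{(0, B)}(x, y)-y)$ converges to the rotation number $r_{(0, B)}(x)$ of the circle homeomorphism $B(x)$; integrating in $x$ and $y$ gives $a_n/n\to (1/q)\int_\T r_{(0, A^{(q)})}(x)\, dx=\bar\rho(p/q, A)$, in accord with the definition in Section~1.3.4. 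The main technical obstacle is the almost-subadditivity: the measure $dx\, dy$ is not invariant under $F_{(\alpha, A)}$, so the estimate must be squeezed out of the bounded oscillation and $1$-periodicity of $\Phi_n$ to absorb the defect coming from the pushforward under $g_m$.
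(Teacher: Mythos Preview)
Your argument is correct and takes a genuinely different route from the paper's. The paper does not give a self-contained proof: it quotes Herman (Prop.~5.7 of \cite{He}) for the continuity of $(\alpha,h)\mapsto\rho(\alpha,h)$ restricted to irrational $\alpha$, and then only has to bridge the gap at each rational $p/q$. For that, it takes any sequence $(\alpha_n,h_n)\to(p/q,h)$ with $\alpha_n$ irrational, picks $f_n$-invariant probability measures $\nu_n$ on $\T\times\bS^1$, passes to a weak-$*$ limit $\nu$, disintegrates $\nu=\int_\T \nu_x\,dx$, and uses that each fiber measure $\nu_x$ is invariant under the circle homeomorphism $\tilde f_x$, whence $\int\varphi(x,\cdot)\,d\nu_x=\mathrm{rot}(\tilde f_x)$ by classical circle-map theory. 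Continuity follows from $\rho(\alpha_n,h_n)=\int\varphi_n\,d\nu_n\to\int\varphi\,d\nu=\rho(0,h)$.

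Your approach, by contrast, treats rational and irrational $\alpha$ on the same footing and is entirely elementary: no invariant measures, no weak compactness, no appeal to Herman. The almost-additivity $|a_{m+n}-a_m-a_n|\le 1$ together with the doubling/Fekete argument gives the quantitative bound $|a_n/n-\bar\rho|\le 1/n$ uniformly in $(\alpha,A)$, which is stronger than mere continuity and not visible in the paper's proof. The price is the bookkeeping around the lift normalisation and the oscillation estimate for $\Phi_n$, both of which you handle correctly (the key point being that $g_m(x,\cdot)$ pushes Lebesgue on $[0,1]$ to a probability measure on a unit interval, and $\Phi_n(x',\cdot)$ is $1$-periodic with oscillation $\le 1$). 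The identification $\ell=\bar\rho$ at rationals via $\Phi_{mq}^{(p/q,A)}=\Phi_m^{(0,A^{(q)})}$ is exactly right and matches the paper's definition in \S1.3.4. One cosmetic remark: you should say explicitly that the continuous choice of lift $g$ exists on the component of maps homotopic to the identity because the lift ambiguity is a locally constant integer; this is standard but worth one sentence.
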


\begin{lem}\label{lem:var2}Assume that the map $\cA$ is Lipschitz with respect to $E$ (uniformly in $x$). If there exists $E_{0}\in I$ for which $(\a,A_{E_{0}})$ is  $C^0$-conjugated to a constant elliptic cocycle, then there exists a constant $C_{E_{0}}$ such that  for any $E\in I$
$$|\rho(\a,A_{E})-\rho(\a,A_{E_{0}})\leq C_{E_{0}}|E-E_{0}|.$$
\end{lem}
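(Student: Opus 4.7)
The plan is to reduce, by conjugation, to the case where the cocycle at $E_0$ is a constant rotation, and then to invoke a Lipschitz estimate for the rotation number near such a cocycle.

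First, let $B \in C^0(\R/2\Z, SL(2,\R))$ be a conjugation such that $B(\cdot+\a)^{-1} A_{E_0}(\cdot) B(\cdot) = R_{\theta_0}$ for some $\theta_0 \in \R$; such a $B$ exists by hypothesis. Setting $\ti A_E(x) := B(x+\a)^{-1} A_E(x) B(x)$, the invariance of the fibered rotation number under continuous conjugacies homotopic to the identity gives $\rho(\a, A_E) = \rho(\a, \ti A_E)$ for every $E$, and in particular $\rho(\a, A_{E_0}) = \theta_0/(2\pi) \bmod 1$. Since $E \mapsto A_E$ is Lipschitz (uniformly in $x$) and $B$ is bounded,
$$\|\ti A_E - R_{\theta_0}\|_{C^0} \leq \|B\|_{C^0}^2 \, \|A_E - A_{E_0}\|_{C^0} \leq C_1(B)\, |E - E_0|.$$

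Second, I claim that for any continuous $A : \T \to SL(2,\R)$ homotopic to the identity one has
$$|\rho(\a, A) - \theta_0/(2\pi)| \leq C_2\, \|A - R_{\theta_0}\|_{C^0}$$
whenever $\|A - R_{\theta_0}\|_{C^0}$ is small enough. To see this, lift the associated projective cocycle to $F_{(\a,A)}(x, y) = (x+\a, y + \phi_A(x, y))$ on $\T \times \R$, where $2\pi\, \phi_A(x, y)$ is the signed angular increment produced by the action of $A(x)$ on the direction $\pi(y) \in \bS^1$. For $A \equiv R_{\theta_0}$ the function $\phi_A$ is identically $\theta_0/(2\pi)$. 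An explicit computation on $SL(2,\R)$ shows that the map $M \mapsto \phi_M(\cdot)$ is Lipschitz from $SL(2,\R)$ (operator norm) to $C^0(\R,\R)$ on a neighborhood of $SO(2,\R)$: a rotation is an isometry of $\R^2$, so small $SL(2,\R)$-perturbations produce uniformly small angular deviations. Averaging the pointwise inequality $|\phi_A(x,y) - \theta_0/(2\pi)| \leq C_2\, \|A(x) - R_{\theta_0}\|$ along the orbit of $F_{(\a,A)}$ and passing to the limit in (\ref{rotnumb}) yields the claim.

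Combining the two estimates gives $|\rho(\a, A_E) - \rho(\a, A_{E_0})| \leq C_{E_0}\, |E - E_0|$ with $C_{E_0} := C_1(B) \cdot C_2 \cdot \mathrm{Lip}(\cA)$, as required.

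The main obstacle is the Lipschitz, as opposed to merely H\"older, dependence of the rotation number on the cocycle map. In general a cocycle over an irrational rotation has only a H\"older-continuous rotation number; the improvement to Lipschitz here rests on the rigidity of the projective action of a constant rotation, namely that the differential of $M \mapsto (y \mapsto My/\|My\|)$ is uniformly bounded in $y$ on a neighborhood of $SO(2,\R)$, whereas near a hyperbolic matrix it blows up along the expanding direction. This rigidity is precisely what the hypothesis of \emph{conjugacy to a constant elliptic} cocycle provides.
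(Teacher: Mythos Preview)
Your argument is correct and follows essentially the same route as the paper's: conjugate so that at $E_0$ the projective dynamics is a pure rotation, bound the deviation of the angular displacement pointwise by a Lipschitz estimate, and average along orbits. The paper phrases this one level up, for general fibered circle homeomorphisms (conjugating by a fiberwise $b$ with $b^{-1}$ Lipschitz in $y$, and recording the constant as $\mathrm{Lip}_E f\cdot \mathrm{Lip}_y b^{-1}$), but the content is identical; one small caveat is that conjugation by $B\in C^0(\R/2\Z,SL(2,\R))$ need not preserve $\rho$ itself (it may shift it by a fixed amount depending on the homotopy class of $B$), yet this is harmless since only the difference $\rho(\a,A_E)-\rho(\a,A_{E_0})$ is at stake.
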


\subsection{Proof of Theorem \ref{theo:mainac}}\label{sec:3.5}
By assumption, there exists a non empty open interval $J$ such that $J\subset \{\rho(p/q,A_{E}), E\in I\}$. From Lemma \ref{lem:var1} there exists some $\e_{1}$ such that for any $\a\in ]p/q-2\e_{1},p/q+2\e_{1}[$, $J\subset \{\rho(\a,A_{E}), E\in I\}$. 
From Proposition \ref{prop:main} we know that  for any $0<\e<\min(\e_{0},\e_{1})$ and any $\a\in D_{p/q}(\e)$  the cocycle $(\a,A_{E})$ is $C^\infty$-conjugated to a constant elliptic cocycle provided $\rho(\a,A_{E})\in DS_{\a}(\kappa)\setminus\{0\}$  for some $\kappa>0$; choose such an $\a$. Since $\cup_{\kappa>0}DS_{\a}(\kappa)$ has full Lebesgue measure, there exists $\kappa$ for which $M:=DS_{\a}(\kappa)\cap J$ has positive Lebesgue measure. If $f:I\to \R$ is defined by $f(E)=\rho(\a,A_{E})$, Lemma \ref{lem:var2} tells us that for any $E\in f^{-1}(M)$ there exists a constant $C_{E}$ such that for any $E'\in I$,  $|f(E')-f(E)|\leq C_{E}|E'-E|$. We apply to this situation the following lemma:

\begin{lem}\label{lem:3.9}
Let $I,J$ be two intervals of $\R$ and  $f:I\to J$ be  a continuous map. Assume  that there exists a set $M\subset J$, of positive Lebesgue measure, such that
for every $x\in f^{-1}(M)$ there is a constant $C_x>0$ such that $|f(x)-f(x')|\leq C_x|x-x'|$ for all $x'$ (close to $x$).
Then $f^{-1}(M)$ has positive measure.
\end{lem}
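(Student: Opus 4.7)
The strategy is a proof by contradiction relying on a pointwise version of the Lusin $N$-property: a map that is pointwise Lipschitz at every point of a set cannot send a Lebesgue-null set to a set of positive measure. So I would assume for contradiction that $f^{-1}(M)$ has Lebesgue measure zero, and aim to conclude that $M$ is null.

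The first step is to decompose $f^{-1}(M)$ by the effective size of the local Lipschitz data. For each integer $n\geq 1$ set
$$E_n=\{x\in f^{-1}(M):\ C_x\leq n\ \text{and}\ |f(x)-f(x')|\leq C_x|x-x'|\ \text{for all}\ x'\ \text{with}\ |x-x'|\leq 1/n\}.$$
By the hypothesis on $M$, every $x\in f^{-1}(M)$ belongs to some $E_n$, so $f^{-1}(M)=\bigcup_n E_n$; under the contradiction hypothesis each $E_n$ has measure zero.

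The heart of the argument is to show $f(E_n)$ is Lebesgue-null for every fixed $n$. Given $\epsilon>0$, I would cover $E_n$ by countably many open intervals $(a_i,b_i)\subset I$ with $b_i-a_i<1/(2n)$ and $\sum_i(b_i-a_i)<\epsilon$. For each $i$ with $E_n\cap(a_i,b_i)\neq\emptyset$, pick $x_i\in E_n\cap(a_i,b_i)$; then every $y\in(a_i,b_i)$ satisfies $|y-x_i|<1/n$, so the pointwise Lipschitz bound at $x_i$ gives $|f(y)-f(x_i)|\leq n(b_i-a_i)$. Therefore $f(E_n\cap(a_i,b_i))$ is contained in an interval of length $2n(b_i-a_i)$, and summing over $i$, the set $f(E_n)$ is covered by intervals of total length at most $2n\epsilon$. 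Letting $\epsilon\to 0$, $f(E_n)$ has measure zero, and hence so does $f(f^{-1}(M))=\bigcup_n f(E_n)$.

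Finally, since $f(f^{-1}(M))=M\cap f(I)$, this forces $M\cap f(I)$ to have measure zero. In the application (Section~\ref{sec:3.5}), $M$ is chosen inside the interval $J\subset\{\rho(\alpha,A_E):\,E\in I\}=f(I)$, so $M\cap f(I)=M$ is itself null, contradicting the hypothesis. The only delicate step is the covering argument: one must be careful to choose the covering intervals shorter than $1/n$ so that the pointwise Lipschitz bound at the chosen $x_i\in E_n$ controls $f$ throughout the \emph{entire} interval $(a_i,b_i)$, not only at points of $E_n$; this is what converts a purely pointwise hypothesis on $f$ into an effective measure estimate for the image.
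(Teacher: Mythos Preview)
Your proof is correct and follows essentially the same approach as the paper's: decompose $f^{-1}(M)$ into sets $K_n$ (or $E_n$) where the Lipschitz constant is bounded by $n$, and use a covering argument to show each $f(K_n)$ is null. Your version is in fact slightly more careful than the paper's, in that you also stratify by the radius of validity of the pointwise Lipschitz bound and explicitly note that $f(f^{-1}(M))=M\cap f(I)$ rather than $M$; the paper leaves both of these implicit.
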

\begin{proof}
The proof goes by contradiction. Assume that $f^{-1}(M)$ has zero measure. Let
$$
K_n=\{x\in f^{-1}(M): C_x\leq n\}, \ n\in \mathbb{Z}.
$$
Then we have $K_n\subset K_{n+1}$, and $\bigcup_{n\geq 1}K_n=f^{-1}(M)$. 
By assumption we also have that each $K_n$ is of measure zero.

For any fixed $n\geq 1$, we can cover the zero measure set $K_n$ by smaller and smaller intervals 
(on $K_n$ we have a uniform upper bound on the Lipschitz constant of $f$) and thus show that the measure of $f(K_n)$ must be zero.

Thus it follows that the set $\bigcup_{n\geq 1}f(K_n)$ has zero measure. 
Since we have $$M=f\left(\bigcup_{n\geq 1}K_n\right)=\bigcup_{n\geq 1}f(K_n),$$
we hence get the contradiction.
\end{proof}
As a consequence, the set of $E\in I$ for which $(\a,A_{E})$ is conjugated to a constant elliptic matrix is of positive Lebesgue measure.\hfill$\Box$

\section{Regular cocycles of mixed type}\label{sec:4}
We denote by $\Omega_{h}=(\R+i[-h,h])/\Z$ and $\Omega^+_{h}=(\R+i(0,h])/\Z$. 
\subsection{Regular cocycles}
We have already defined the notion of a  {\it totally elliptic} map $A:\T\to SL(2,\R)$: for any $x\in\T$, $A(x)$ is elliptic. Changing ``elliptic'' to ``hyperbolic'' would lead to the notion of a {\it totally hyperbolic} map. We say that a map $A:\T\to SL(2,\R)$  is of {\it mixed type} if there exists $x\in\T$ for which $|{\rm tr}A(x)| =2$.  Similarly, if $A:\Omega_{h}\to SL(2,\C)$ is holomorphic and  real on $\Omega_{0}$ we say it is of mixed type if its restriction to $\T$ is.  Furthermore, we say that $A:\Omega_{h}\to SL(2,\C)$  is {\it regular} if there exists $0<h'\leq h$ and a holomorphic map $\l_{A}:\Omega^+_{h'}:= (\R+i(0,h'])/\Z\to\C$, such that for any $z\in\Omega_{h'}^+$, $|\l_{A}(z)|>1$ and $\{\l_{A}(z),\l_{A}(z)^{-1}\}$ are the eigenvalues of $A(z)$. In that case the spectral radius, ${r_{\spec}}(A)$,  of $A$ is equal to $|\l_{A}(z)|$. Notice that $h':=h'(A,h)$ can be chosen to depend continuously on $A\in C^\omega_{h}(\T,SL(2,\R))$ regular.

We say that a cocycle of the form $(0,A)$ is regular (resp. has mixed type) if the same is true for $A$.

Here is a criterium for regularity.
\begin{lem}\label{lem:reg}Let $A\in C^\omega_{h}(\T,SL(2,\R))$. If for any $x\in\T$ such that $|{\rm tr}(A(x))|\leq 2$ one has 
$|\frac{d}{dx}{\rm tr A}(x)|> 0$
 then $A$ is regular.
\end{lem}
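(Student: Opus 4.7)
The plan is to show that for some $h'\in(0,h]$, the trace $T(z):=\tr A(z)$ satisfies
\[
T(z)\notin[-2,2]\qquad\text{for every }z\in\Omega^{+}_{h'}.
\]
Granted this, regularity would follow quickly: the eigenvalues of $A(z)$ are the reciprocal roots of $\lambda^{2}-T(z)\lambda+1=0$, and a short computation shows that their moduli coincide precisely when $T(z)\in[-2,2]\subset\R$. So on $\Omega^{+}_{h'}$ one would have $|\lambda_{+}(z)|\ne|\lambda_{-}(z)|$ with $\lambda_{+}\lambda_{-}=1$; defining $\lambda_A(z)$ as the eigenvalue of modulus $>1$ gives a well-defined function on $\Omega^{+}_{h'}$ which, being a simple root of the characteristic polynomial at each point, is locally holomorphic by the implicit function theorem and therefore holomorphic on $\Omega^{+}_{h'}$. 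This produces the required $\lambda_A$.

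To construct such an $h'$, I would partition $\T=(\T\setminus S)\cup S$ with $S:=\{x\in\T:|T(x)|\le 2\}$. On the open set $\T\setminus S$, compactness of its complement in $\T$ gives $|T(x)|\ge 2+\delta$ for some $\delta>0$, and by continuity $|T(z)|>2$ on an open complex neighborhood $U\subset\Omega_h$ of $\T\setminus S$; in particular $T(z)\notin[-2,2]$ on $U$. On $S$, the hypothesis together with compactness yields $|T'(x)|\ge\eta>0$. Since $T$ takes real values on $\T$ and $T'(x_{0})\in\R\setminus\{0\}$ for every $x_{0}\in S$, the implicit function theorem applied to $\Im T$ (noting $\partial_{y}\Im T(x_{0})=\partial_{x}\Re T(x_{0})=T'(x_{0})\ne 0$ by Cauchy--Riemann) produces a complex neighborhood $W_{x_{0}}\subset\Omega_h$ of $x_{0}$ such that $T^{-1}(\R)\cap W_{x_{0}}=W_{x_{0}}\cap\R$; in particular every non-real $z\in W_{x_{0}}$ satisfies $T(z)\notin\R$. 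Extracting a finite subcover of $S$ packages these neighborhoods into an open set $V$ with the same property. Choosing $h'$ small enough that $\Omega^{+}_{h'}\subset U\cup V$ then finishes the construction: a point in $U$ has $|T(z)|>2$, a point in $V\setminus\R$ has $T(z)\notin\R$, so in either case $T(z)\notin[-2,2]$.

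The step I expect to be the real obstacle is the treatment of the elliptic set $S$. Without the transversality hypothesis $|T'(x)|>0$ on $S$, a critical point $x_{0}\in S$ of $T|_{\T}$ could seed a complex-analytic branch of $T^{-1}(\R)$ that leaves the real axis into the upper half-strip, carrying $T$-values inside $[-2,2]$ arbitrarily close to $\T$ and destroying any coherent holomorphic selection of the dominant eigenvalue on $\Omega^{+}_{h'}$. The implicit function theorem step is precisely what the hypothesis is there to underwrite; the rest is compactness and continuity.
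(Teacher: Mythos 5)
Your proof is correct and takes essentially the same approach as the paper: reduce regularity to showing $T(z):=\tr A(z)\notin[-2,2]$ on some $\Omega^+_{h'}$, and use the hypothesis $T'\ne 0$ on the elliptic set to push $T$ off the real axis for $\Im z>0$ small; where the paper reads this off from the Taylor expansion $\Im T(x+iy)=T'(x)y+O(y^2)\ne 0$, you invoke the implicit function theorem for $\Im T$ via $\partial_y\Im T(x)=T'(x)$, which is the same Cauchy--Riemann computation in different dress. One minor slip worth noting: compactness of $S$ does \emph{not} give $|T(x)|\ge 2+\delta$ on the open set $\T\setminus S$ (which can accumulate at $\partial S$, where $|T|=2$), but this is harmless since you never use that uniform bound --- the set $U=\{z\in\Omega_h:|T(z)|>2\}$ is open by continuity alone, and compactness of $\T$ together with the finite cover of $S$ by the $W_{x_0}$ already yields the uniform $h'$ with $\Omega^+_{h'}\subset U\cup V$.
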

\begin{proof} Let  us fix $\e>0$ such that $|\frac{d}{dx}{\rm tr A}(x)|\geq \delta>0$ for $|{\rm tr}(A(x))|\leq 2+\e$. We just have to prove that on some $\Omega_{h'}^+$, $h'>0$, one can follow the eigenvalues of $A$ in a holomorphic way and it is enough to prove that for any $z\in\Omega_{h'}^+$ $A(z)$ has no eigenvalue  lying on the unit circle. If $h'$ is small enough, this is clear on $\{x+iy:|{\rm tr}(A)(x)|>2+\e, 0<y\leq h'\}$. On the other hand, if $x+iy\in \{x+iy: |{\rm tr}(A)(x)|\leq 2+\e, 0<y\leq h'\}$ then ${\rm tr}A(x+iy)={\rm tr}A(x)+\frac{d}{dx}{\rm tr}A(x)\cdot(iy)+O(y^2)$. But, if one  eigenvalue of $A(x+iy)$ is on the unit circle ${\rm tr}A(x+iy)$ is real and in $[-2,2]$; thus $0=\Im \frac{d}{dx}{\rm tr}A(x)\cdot(iy)+O(y^2)=\frac{d}{dx}{\rm tr}A(x)y+O(y^2)$ (since $\frac{d}{dx}{\rm tr}A(x)$ is also real). But for $0<y<h'$, $h'$ small enough this is impossible since $|\frac{d}{dx}{\rm tr A}(x)|\geq \delta>0$. 
\end{proof}

The main result is the following
\begin{theo}\label{theo:5.1}Let $(p,q)\in\Z\times\N^*$, $A\in C^\omega_{h}(\T,SL(2,\R))$ and assume that $(p/q,A)^q$ is  regular and of mixed type. Then,
 there exists $\eta_{2}:=\eta_{2}(A,h,q)>0$, continuous with respect to $A\in C^\omega_{h}(\T,SL(2,\R))$,   such that for any $\a$ irrational satisfying $0<|\a-(p/q)|<\eta_{2}$  one has 
$$LE(\a,A)>0.$$

\end{theo}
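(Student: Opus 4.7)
The plan is to establish positivity of the Lyapunov exponent via a subharmonicity argument on the complexified strip, in the spirit of Avila's acceleration theory but using only its most elementary ingredients, together with the openness of uniform hyperbolicity.

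I would first reduce to the $q$-th iterate: writing $A^{(q)}_\alpha(x) := A(x+(q-1)\alpha)\cdots A(x)$, one has $LE(\alpha, A) = (1/q)\,LE(q\alpha, A^{(q)}_\alpha)$, and as $\alpha \to p/q$ the cocycle $A^{(q)}_\alpha$ converges to $\hat A := A^{(q)}_{p/q}$ in $C^\omega_h$ while $q\alpha \to 0 \pmod 1$. Introduce the complexified Lyapunov function $\ell(\alpha, y) := LE(q\alpha, A^{(q)}_\alpha(\cdot + iy))$ for $y \in [-h, h]$. The convexity in $y$ of each finite-$n$ average $(1/n)\int_\T \ln \|(A^{(q)}_\alpha)_n(x+iy)\|\,dx$ (Herman's subharmonicity trick, since $\ln\|M(z)\|$ is subharmonic for holomorphic $M$) is inherited by $\ell(\alpha, \cdot) = \inf_n(\cdot)$, which is thus convex and even.

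Next I would compute the reference value $\ell(p/q, y)$: at $\alpha = p/q$ the rotation is trivial, giving $\ell(p/q, y) = \int_\T \ln r_{\spec}(\hat A(x+iy))\,dx$. Regularity provides $h' \in (0, h]$ and a holomorphic non-vanishing branch $\lambda_{\hat A}:\Omega^+_{h'} \to \C$ with $|\lambda_{\hat A}| > 1$ whose values are eigenvalues, so that $r_{\spec}(\hat A(z)) = |\lambda_{\hat A}(z)|$ and $\ln|\lambda_{\hat A}|$ is harmonic on $\Omega^+_{h'}$. Since $\int_\T \partial_x^2 \ln|\lambda_{\hat A}|\,dx = 0$ by periodicity, $\partial_y^2 \ell(p/q, y) = 0$, i.e.\ the function is affine:
\[
\ell(p/q, y) = a + b y \qquad \text{on } (0, h'),
\]
with $a := \lim_{y \to 0^+} \ell(p/q, y) \geq 0$ and $b = -2\pi k$, where $k \in \Z$ is the winding number of $\lambda_{\hat A}$ around $0$ along $\T + iy$. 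As $|\lambda_{\hat A}| > 1$ strictly on $\Omega^+_{h'}$, $\ell(p/q, y) > 0$ for $y > 0$. In the paper's applications $\hat A$ is homotopic to the identity, forcing $k = 0$ and hence $b = 0$; the mixed-type hypothesis then amounts to $\{x \in \T : |\tr \hat A(x)| > 2\}$ being a non-empty open set, which gives $a = \int_\T \ln r_{\spec}(\hat A)\,dx > 0$.

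Finally, I would transfer positivity to $y = 0$ for $\alpha$ irrational near $p/q$. For any fixed $y_0 \in (0, h')$, the cocycle $(0, \hat A(\cdot + iy_0))$ is uniformly hyperbolic as an $SL(2,\C)$-cocycle since each fiber $\hat A(x + iy_0)$ is hyperbolic with distinct eigenvalue moduli and continuous invariant splitting spanned by the eigenvectors of $\lambda_{\hat A}^{\pm 1}$. By $C^0$-openness of UH, the perturbed cocycle $(q\alpha, A^{(q)}_\alpha(\cdot + iy_0))$ remains UH for $\alpha$ close to $p/q$, and continuity (in fact real-analyticity) of $LE$ on UH cocycles yields $\ell(\alpha, y_0) \to a$ as $\alpha \to p/q$. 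Convexity then gives, for any $0 < y_1 < y_2 < h'$,
\[
\ell(\alpha, 0) \geq \ell(\alpha, y_1) - y_1 \cdot \frac{\ell(\alpha, y_2) - \ell(\alpha, y_1)}{y_2 - y_1}.
\]
Letting $\alpha \to p/q$ the right-hand side tends to $a - y_1 \cdot 0 = a$, so $\ell(\alpha, 0) \geq a/2$ for $\alpha$ close enough to $p/q$, yielding $LE(\alpha, A) = \ell(\alpha, 0)/q > 0$.

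The main obstacle is extracting strict positivity $a > 0$: the literal mixed-type condition admits the degenerate scenario $|\tr \hat A| \leq 2$ everywhere, in which $a = 0$ and only the slope $b$ of $\ell(p/q, \cdot)$ is positive, so that the above convexity bound becomes vacuous. The argument must therefore use the homotopy-to-identity of the underlying (Schr\"odinger-type) cocycle to force $k = 0$, hence $b = 0$, thereby upgrading the pointwise positivity on $(0, h')$ to positivity of the intercept $a$. A secondary technicality is the uniform control of both the $C^0$-openness of UH and the LE-continuity in a $C^\omega_h$-neighborhood of $A$, as required by the continuous dependence of $\eta_2$ on $A$ stated in the theorem.
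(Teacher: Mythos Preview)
Your approach is essentially the paper's: convexity of $y\mapsto LE$ via subharmonicity, affineness of $\ell(p/q,\cdot)$ on $(0,h')$ from harmonicity of $\log|\lambda_{\hat A}|$, continuity of $\ell(\alpha,y_0)$ in $\alpha$ at fixed $y_0>0$ via uniform hyperbolicity (the paper does this by an explicit smooth diagonalization of $\hat A(\cdot+i\nu)$, you by abstract openness of UH---either works), and linear extrapolation to $y=0$ from two points $y_1<y_2$ using convexity. One small remark: your extrapolation already gives $\liminf_{\alpha\to p/q}\ell(\alpha,0)\ge a$ regardless of the slope $b$, since $(a+by_1)-y_1\cdot b=a$; the detour through ``$b=0$'' is not needed for that step.

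The slip is in your final paragraph. Homotopy-to-identity of $A$ (or of $\hat A$) does \emph{not} force the winding number of $\lambda_{\hat A}$, hence the slope $b$, to vanish: Schr\"odinger cocycles are always homotopic to the identity yet can have any non-negative acceleration in Avila's sense. So this route to $a>0$ fails. The paper simply asserts that regularity together with mixed type gives $\Re\bar\theta_A>0$ (i.e.\ $a>0$). In the applications this is immediate because regularity is verified through Lemma~\ref{lem:reg}, whose hypothesis $(\tr\hat A)'\neq 0$ on $\{|\tr\hat A|\le 2\}$ is incompatible with $|\tr\hat A|\le 2$ holding on all of $\T$ (a periodic function cannot have nowhere-vanishing derivative); hence $\{|\tr\hat A|>2\}$ is non-empty and $a=\int_\T\log r_{\spec}(\hat A)\,dx>0$ directly, without any homotopy argument.
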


\subsection{Complex extensions}Let $\a\in(\R-\Q)/\Z$ and $A:\T\to SL(2,\R)$ a real analytic map admitting a holomorphic extension on a complex strip $\Omega_{h}=(\R+[-i h,ih])/\Z$ ($h>0$). For $|\nu|\leq h$ define $A_{\nu}(\cdot)=A(\cdot+i\nu):\T\to SL(2,\C)$. As usual $LE(\a,A_{\nu})$ is the limit of $\int_{\T}(1/n)\log\|\prod_{k=n-1}^0A(x+i\nu+k\a)\|dx$.  Notice that when $\a=p/q$ is rational this limit coincide with  $(1/q)\int_{\T}\log r_{\spec}(\prod_{k=q-1}^0A(x+i\nu+kp/q))dx$.We shall need the following fact:

\begin{lem}\label{lem:cvx}For any $\a\in \T$, the map $L_{\a}:[-h,h]\to \R_{+}$, $\nu\mapsto LE(\a,A_{\nu})$ is an even  convex function.  
\end{lem}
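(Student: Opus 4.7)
The plan is to derive convexity from subharmonicity of the integrands in the vertical direction, and evenness from the Schwarz reflection principle.

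For evenness, note that since $A$ is real analytic on $\T$ (so real on $\R$) with holomorphic extension to $\Omega_{h}$, Schwarz reflection gives $A(\bar z)=\overline{A(z)}$ on $\Omega_{h}$. Hence for $x\in\R$ one has $A_{-\nu}(x)=A(x-i\nu)=\overline{A(x+i\nu)}=\overline{A_{\nu}(x)}$, and consequently $A_{-\nu}^{(n)}(x)=\overline{A_{\nu}^{(n)}(x)}$. The operator norm is invariant under complex conjugation, so $\|A_{-\nu}^{(n)}(x)\|=\|A_{\nu}^{(n)}(x)\|$; integrating over $x$ and taking the $(1/n)\log$-limit yields $L_{\alpha}(-\nu)=L_{\alpha}(\nu)$.

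For convexity, I would first prove the finite-$n$ statement for
$$\Phi_{n}(\nu)=\frac{1}{n}\int_{\T}\log\|A^{(n)}(x+i\nu)\|\,dx.$$
The entries of $A^{(n)}(z)=A(z+(n-1)\alpha)\cdots A(z)$ are holomorphic on $\Omega_{h}$, and the function $u_{n}(z):=\log\|A^{(n)}(z)\|$ is (pluri)subharmonic on $\Omega_{h}$ as a continuous supremum of the subharmonic functions $z\mapsto \log|\langle A^{(n)}(z)u,v\rangle|$ over unit vectors $u,v\in\C^{2}$. Moreover $u_{n}\geq 0$, since $\det A^{(n)}\equiv 1$ forces $\|A^{(n)}\|\geq 1$ (the singular values being reciprocal). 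The key elementary lemma is then: if $u$ is subharmonic on $\Omega_{h}$ and $\T$-periodic in $x$, then $\nu\mapsto\int_{\T}u(x+i\nu)\,dx$ is convex on $(-h,h)$. Reducing by convolution with a smooth radial mollifier to the $C^{2}$ case, this follows from
$$\frac{d^{2}}{d\nu^{2}}\int_{\T}u(x+i\nu)\,dx=\int_{\T}\partial_{yy}u\,dx=\int_{\T}\Delta u\,dx\geq 0,$$
the $\partial_{xx}$ contribution dropping out by periodicity in $x$. Passing to the limit in the mollifier preserves convexity, so $\Phi_{n}$ is convex on $(-h,h)$ for every $n$.

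Finally, by Kingman's subadditive ergodic theorem $\Phi_{n}(\nu)\to L_{\alpha}(\nu)$ pointwise, and a pointwise limit of convex functions is convex; hence $L_{\alpha}$ is convex on the open interval, and this extends to the closed strip $[-h,h]$ by the continuity provided by the subharmonic maximum principle applied to the bounded $u_{n}$. The only mildly delicate step is the mollification and its behavior near the boundary $\nu=\pm h$, which is handled by working first on each compact subinterval $[-h+\delta,h-\delta]$ and then letting $\delta\to 0$; everything else is routine bookkeeping of classical facts about plurisubharmonic functions and Lyapunov exponents.
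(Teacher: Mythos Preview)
Your proof is correct and follows essentially the same route as the paper: the paper's argument is the two-line observation that $z\mapsto (1/n)\log\|A^{(n)}(z)\|$ is subharmonic on $\Omega_{h}$, hence its horizontal average is convex in $\nu$, and evenness comes from $A$ being real on the real axis. You have simply fleshed out these steps (Schwarz reflection for evenness, the mollification argument for the passage from subharmonic to convex average, and the limit in $n$). One cosmetic remark: the convergence $\Phi_{n}(\nu)\to L_{\a}(\nu)$ is just Fekete subadditivity of the integrated quantities (this is how $LE$ is defined), not Kingman's theorem, which concerns the a.e.\ pointwise limit; this does not affect the argument.
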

\begin{proof}For $z\in\Omega_{h}$, $n\in\N$ the map $M_{n}:z\mapsto \frac{1}{n}\log \|\prod_{k=n-1}^0A(z+k\a)\|$ is subharmonic, hence the map $L_{n}(\a,\cdot):\nu\mapsto \int_{\T}M_{n}(\cdot+i\nu)$ is convex as well as $\nu\mapsto \lim_{n\to\infty} M_{n}(\nu)$. It is even because $A$ is real on the real axis. 
\end{proof}

Though we will not need it, we mention that a much stronger (and more difficult to prove) statement is the following theorem due to A. Avila \cite{A-Acta}. 
\begin{theo}[Avila]The map $L_{\a}:[-h,h]\to \R_{+}$, $\nu\mapsto LE(\a,A_{\nu})$ is an even  piecewise linear convex function the slopes of which are integer multiples of $2\pi$. Moreover, if $\nu\in (0,h)$ has an open neighborhood on which $L_{\a}(\cdot)$ is linear, the cocycle $(\a,A_{\nu}(\cdot))$ is uniformly hyperbolic. The slope of $L_{\a}$ at $0^+$ is called the acceleration.
\end{theo}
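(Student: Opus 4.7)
The plan is to exploit the holomorphic hyperbolic structure of $A^{(q)}$ off the real axis coming from the regularity hypothesis, and to propagate the resulting positivity down to the real axis via convexity of the Lyapunov exponent in the imaginary direction together with a continuity argument as $\alpha\to p/q$.

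Since $(p/q,A)^q=(0,A^{(q)})$ and $A^{(q)}$ is regular, the dominant eigenvalue $\lambda_A:\Omega_{h'}^+\to\C\setminus\{0\}$ is holomorphic with $|\lambda_A|>1$, so $(p/q,A_\nu)$ is uniformly hyperbolic for every $\nu\in(0,h')$ and
\[
L_{p/q}(\nu) \;=\; \frac{1}{q}\int_{\T} \log|\lambda_A(x+i\nu)|\,dx.
\]
Writing $\log\lambda_A(z)=2\pi i w z + g(z)$ with $g$ single-valued holomorphic on $\Omega_{h'}^+$ and $w\in\Z$ the winding number of $\lambda_A$ around $0$, a Fourier computation yields the exact affine formula $L_{p/q}(\nu)=c\,\nu + L_{p/q}(0)$ for $\nu\in(0,h')$, with $c:=-2\pi w/q \geq 0$ (convexity and evenness of $L_{p/q}$ preclude $c<0$). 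Crucially, when $L_{p/q}(0)=0$ — which is compatible with mixed type, e.g.\ when $|\mathrm{tr}\,A^{(q)}|\leq 2$ everywhere on $\T$ with equality at some point — the positivity of $L_{p/q}(\nu)$ for $\nu>0$ forces $c>0$ strictly, since $|\lambda_A|>1$ in the open strip cannot occur with $\lambda_A$ nullhomotopic in $\C\setminus\{0\}$ if its boundary values stay on the unit circle.

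By openness of uniform hyperbolicity and continuity of the Lyapunov exponent on the UH set, for each fixed $\nu_0\in(0,h')$ there exists $\eta(\nu_0)>0$, continuous in $A\in C^\omega_h(\T,SL(2,\R))$, such that $(\alpha,A_{\nu_0})$ is UH for $|\alpha-p/q|<\eta(\nu_0)$ and $L_\alpha(\nu_0)\to L_{p/q}(\nu_0)=c\nu_0+L_{p/q}(0)>0$ as $\alpha\to p/q$; the announced $\eta_2(A,h,q)$ will be extracted from these $\eta(\nu_0)$. The main step is then to convert positivity of $L_\alpha(\nu_0)$ into positivity of $L_\alpha(0)$, for each individual irrational $\alpha$ in a (possibly smaller) punctured neighborhood. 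Combining convexity and evenness of $\nu\mapsto L_\alpha(\nu)$ (Lemma~\ref{lem:cvx}) with a subharmonic analysis in the spirit of Avila's acceleration theory — but avoiding the full theorem — one studies the finite-$n$ convex averages $\phi_n(\nu):=\frac{1}{n}\int_{\T}\log\|M_n(x+i\nu,\alpha)\|\,dx$. For $n=kq$ and $\alpha$ close to $p/q$, $M_{kq}(z,\alpha)$ is a small perturbation of $(A^{(q)}(z))^k$ on $\Omega_{h'}^+$; the positive slope $c$ of $L_{p/q}$ yields a strictly positive contribution to $\phi_n(\nu_0)$ at the finite-$n$ level, and this contribution, propagated via convexity/subharmonicity from $\nu_0$ down to the real axis and passed to the limit $n\to\infty$, yields $L_\alpha(0)>0$.

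The main obstacle is precisely this last step. Upper semicontinuity of the Lyapunov exponent and naive convexity of $L_\alpha$ applied at interior points $\nu_0,\nu_1\in(0,h')$ yield merely $\liminf_{\alpha\to p/q}L_\alpha(0)\ge L_{p/q}(0)$, which is inadequate when $L_{p/q}(0)=0$. The positive slope $c>0$ coming from the non-trivial winding of $\lambda_A$ must therefore be harnessed \emph{at the finite-$n$ level} through the subharmonicity of $z\mapsto\log\|M_n(z,\alpha)\|$ on $\Omega_{h'}^+$ — this finite-$n$ subharmonic comparison, linking the slope generated by the holomorphic eigenvalue on the strip to the growth of $\log\|M_n\|$ on $\T$ itself, is the technical core of the argument.
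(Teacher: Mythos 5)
Your proposal does not address the statement at hand. The statement is Avila's global-theory result from \cite{A-Acta}: for a general $A\in C^\omega_h(\T,SL(2,\R))$ the function $\nu\mapsto LE(\a,A_\nu)$ is piecewise affine on all of $[-h,h]$ with slopes quantized in $2\pi\Z$, and affineness on an open neighborhood of a point $\nu\in(0,h)$ forces $(\a,A_\nu)$ to be uniformly hyperbolic. The paper gives no proof of this; it is quoted from \cite{A-Acta} and the authors explicitly remark ``Though we will not need it''. What you have written is instead a strategy for Theorem \ref{theo:5.1} (positivity of $LE(\a,A)$ for $\a$ near $p/q$ when $(p/q,A)^q$ is regular and of mixed type): your hypotheses (regularity of $A^{(q)}$, $\a\to p/q$), your intermediate objects ($\l_A$, the affine formula for $L_{p/q}$ on $(0,h')$, perturbative uniform hyperbolicity of $(\a,A_{\nu_0})$) and your conclusion ($L_\a(0)>0$) all belong to that theorem, not to this one.

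Concretely, none of the three assertions of the theorem is established by your argument. First, piecewise affineness of $L_\a$ on the whole strip for a general analytic $A$ and irrational $\a$: you only obtain affineness of $L_{p/q}$ on a sub-strip where a regularity hypothesis holds; Avila's proof needs no such hypothesis and proceeds by periodic approximation combined with an integrality argument for the candidate slopes. Second, quantization of slopes in $2\pi\Z$: your identification of the slope as $-2\pi w/q$ via the winding number of $\l_A$ is a correct special-case computation in the regular regime, but the content of the theorem is precisely that quantization persists without regularity. Third, the implication ``affine near $\nu$ $\Rightarrow$ $(\a,A_\nu)$ uniformly hyperbolic'' does not appear in your proposal at all. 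As a side remark, the obstacle you flag in your last paragraph is moot even for Theorem \ref{theo:5.1}: mixed type gives $\Re\bar\th_{A^{(q)}}>0$, hence $L_{p/q}(0)>0$, and the paper's two-point convexity extrapolation (comparing the convex $\ti L_{q\a}$ with the affine $\ti L_0$ at $\nu_1<\nu_2$ and extrapolating back to $\nu=0$) already yields $\ti L_{q\a}(0)\geq \ti L_0(0)-\ep$; no finite-$n$ subharmonic comparison is required.
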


\subsection{Consequences of regularity}
When $A$ is regular and $\a=0$ one can improve the result of Lemma \ref{lem:cvx}.
\begin{lem}\label{lem:reglem}If $A\in C^\omega_{h}(\T,SL(2,\R))$ is regular, the map $L_{0}$ restricted to $[0,h']$ is  a non-negative, non decreasing affine function.
\end{lem}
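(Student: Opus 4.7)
My plan is to exploit the regularity hypothesis to write $L_0(\nu)$, for $\nu\in(0,h']$, as the integral of a harmonic function on $\Omega_{h'}^+$, and then use $x$-periodicity together with the evenness and convexity of $L_0$ (Lemma \ref{lem:cvx}) to conclude.

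The first step is to reduce the Lyapunov exponent to the (log of the) spectral radius. For $\nu\in(0,h']$ fixed, the matrix $A(x+i\nu)$ has distinct eigenvalues $\lambda_A(x+i\nu)^{\pm 1}$ with $|\lambda_A(x+i\nu)|>1$, so it is diagonalizable with a condition number that depends continuously, and hence is uniformly bounded, on $x\in\T$. Combining Gelfand's formula with dominated convergence (the lower bound $\|A(x+i\nu)^n\|\geq|\lambda_A(x+i\nu)|^n$ and the upper bound coming from the diagonalization), I obtain
\[
L_0(\nu)=\int_{\T}\log|\lambda_A(x+i\nu)|\,dx,\qquad \nu\in(0,h'].
\]

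The second step is harmonic analysis. Since $\lambda_A:\Omega_{h'}^+\to\C$ is holomorphic and non-vanishing, $\log\lambda_A$ is locally holomorphic and $u(z):=\log|\lambda_A(z)|$ is harmonic on $\Omega_{h'}^+$. Writing $z=x+i\nu$ and using the periodicity of $\partial_xu$ in $x$,
\[
\frac{d^2}{d\nu^2}\int_{\T}u(x+i\nu)\,dx=\int_{\T}\partial_\nu^2 u\,dx=-\int_{\T}\partial_x^2 u\,dx=0,
\]
so $L_0$ is affine on $(0,h']$. By Lemma \ref{lem:cvx} the function $L_0$ is convex on $[-h,h]$ and hence continuous on the interior; this continuity lets the affine formula extend to $[0,h']$.

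It remains to check the sign conditions. Non-negativity is automatic since $L_0(\nu)$ is a Lyapunov exponent. For monotonicity on $[0,h']$, evenness and convexity give
\[
L_0(0)=L_0\!\left(\tfrac{\nu+(-\nu)}{2}\right)\leq \tfrac{1}{2}\bigl(L_0(\nu)+L_0(-\nu)\bigr)=L_0(\nu),
\]
so $0$ is a minimum of $L_0$; combined with the fact that $L_0$ is affine on $[0,h']$, this forces its slope there to be $\geq 0$. The one technical point I expect to be careful about is the dominated convergence in the first step, specifically the uniformity in $x$ of the bound on the condition number of the diagonalization of $A(x+i\nu)$; this is however a routine compactness argument on $\T$ at fixed $\nu>0$.
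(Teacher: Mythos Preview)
Your proof is correct and follows essentially the same route as the paper: identify $L_0(\nu)$ with $\int_{\T}\log|\lambda_A(x+i\nu)|\,dx$ via the spectral radius, use harmonicity of $\log|\lambda_A|$ on $\Omega_{h'}^+$ to get affineness, and deduce monotonicity from the evenness and convexity established in Lemma~\ref{lem:cvx}. The paper is terser (it simply asserts that harmonicity of $\log|\lambda_A|$ forces the $x$-average to be affine in $\nu$, and that the identification with the spectral radius was already noted before the lemma), but your added details on dominated convergence and on computing $\frac{d^2}{d\nu^2}\int_\T u$ via $\Delta u=0$ and periodicity are sound and make the argument self-contained.
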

\begin{proof}Indeed,  in that case   for any $0<\nu\leq h'$, $L_{0}(\nu)= \int_{\T}\log|\l_{A}(x+i\nu)|dx$. Since $\l_{A}(\cdot)$ is holomorphic on $\Omega^+_{h'}$ and non zero, $\log|\l_{A}(\cdot)|$ is harmonic on $\Omega^+_{h'}$ and hence $L_{0}$ is affine on $[0,h']$. Since $L_{0}$ is convex and even on $[-h',h']$ it has to be non decreasing on $[0,h']$.
\end{proof}
Notice that there exist $r_{A}\in\Z$ and  a  holomorphic $\th_{A}:\Omega^+_{h'}\to \C$ such that for any $z\in \Omega^+_{h'}\to \C$
$$\l_{A}(z) =\exp(2\pi ir_{A}z+\theta_{A}(z)).$$
By Cauchy formula $\int_{\T}\theta_{A}(x+i\nu)dx$ is independent of $\nu$; let us call $\bar \theta_{A}$ this value. We have
$$\int_{\T}\log |\l_{A}(x+i\nu)|dx=2\pi r_{A}\nu+\Re\bar \th_{A}$$
and since this quantity is non negative and non decreasing on $(0,h']$ we have $r_{A}\geq 0$ and $\Re\bar\th_{A}\geq 0$. When $A$ is of mixed type $\Re\bar\th_{A}>0$.

When $A$ is regular we can diagonalize $A$ in a holomorphic way:
\begin{lem}If $A\in C^\w_h(\T,SL(2,\R)))$ is regular on $\Omega_{h'}^+$, then for any $0<\nu<h'$,   there exists $B_{\nu}\in C^\infty(\T,GL(2,\C))$ such that $B_{\nu}(\cdot)^{-1}A(\cdot+i\nu)B_{\nu}(\cdot)=D_{\l_{A}(\cdot+i\nu )}$ where  we use the notation $D_{\l}={\rm diag}(\l,\l^{-1})$.
\end{lem}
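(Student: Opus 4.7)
\smallskip\noindent\textbf{Proof plan.} The plan is to construct the diagonalizer $B_\nu$ by a holomorphic spectral projector followed by a smooth trivialization of a complex line bundle over the circle.

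First, fix $\nu\in(0,h')$. Since $A$ is regular on $\Omega_{h'}^+$, the two eigenvalues $\lambda_A(z)$ and $\lambda_A(z)^{-1}$ of $A(z)$ are holomorphic on $\Omega_{h'}^+$, and they are genuinely distinct there because $|\lambda_A(z)|>1$ forces $\lambda_A(z)^2\ne 1$. Hence the Riesz spectral projector onto the $\lambda_A$-eigenspace admits the explicit holomorphic formula
\[
P_+(z)\;=\;\frac{A(z)-\lambda_A(z)^{-1}I}{\lambda_A(z)-\lambda_A(z)^{-1}},\qquad z\in\Omega_{h'}^+,
\]
and similarly for $P_-(z)$. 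Restricting to the horizontal slice $z=x+i\nu$ gives two smooth (in fact real-analytic in $x$) families of rank-one complementary projectors on $\T$, with ranges $L_\pm(x):=\range P_\pm(x+i\nu)\subset\C^2$.

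Second, I want to produce smooth nowhere-vanishing sections $v_\pm:\T\to\C^2$ with $v_\pm(x)\in L_\pm(x)$. This is exactly the assertion that the complex line bundles $L_\pm\to\T$ are trivial, which is automatic since any complex line bundle over $S^1$ is trivial ($H^2(S^1,\Z)=0$). Concretely, one can avoid invoking this by the following explicit construction: for each $x\in\T$ at least one of $P_+(x+i\nu)e_1$, $P_+(x+i\nu)e_2$ is nonzero (since $P_+$ is of rank one), pick a smooth partition of unity $\{\chi_1,\chi_2\}$ on $\T$ subordinated to the open cover by the sets $\{x:P_+(x+i\nu)e_j\ne 0\}$, and set
\[
v_+(x)\;=\;\chi_1(x)\,P_+(x+i\nu)e_1\;+\;\chi_2(x)\,g(x)\,P_+(x+i\nu)e_2,
\]
where $g:\T\to\C^*$ is a smooth phase function chosen so that the two summands are positive multiples of each other whenever both are nonzero (which is possible since they lie in the same complex line $L_+(x)$ on the overlap and the quotient is a smooth map into $\C^*$). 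The same recipe yields $v_-$.

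Third, set $B_\nu(x)=[v_+(x)\,|\,v_-(x)]$. Because $v_+(x)\in L_+(x)$ and $v_-(x)\in L_-(x)$ are eigenvectors for the distinct eigenvalues $\lambda_A(x+i\nu)$ and $\lambda_A(x+i\nu)^{-1}$, the two columns are linearly independent, so $B_\nu\in C^\infty(\T,GL(2,\C))$. By construction one has $A(x+i\nu)B_\nu(x)=B_\nu(x)D_{\lambda_A(x+i\nu)}$, i.e.\ $B_\nu(\cdot)^{-1}A(\cdot+i\nu)B_\nu(\cdot)=D_{\lambda_A(\cdot+i\nu)}$.

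The only mildly delicate point is the global trivialization of the line bundles $L_\pm$; everything else (holomorphy of the projector, distinctness of eigenvalues, columnwise diagonalization) is formal. Since the base is the circle, triviality is free, and the partition-of-unity patching above gives an entirely elementary proof without any bundle theory.
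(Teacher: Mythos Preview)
Your argument is correct and follows essentially the same route as the paper: both reduce the problem to trivializing the two eigen line bundles $L_\pm\to\T$ and then take the resulting sections as the columns of $B_\nu$. The only difference is in how the trivialization is produced. The paper observes that since $\dim_\R\T=1<2=\dim_\R\mathbb{P}^1(\C)$, there is a line $\beta=[\beta_1,\beta_2]\in\mathbb{P}^1(\C)$ missed by the images of both $E^\pm:\T\to\mathbb{P}^1(\C)$; intersecting each $E^\pm(x)$ with the affine hyperplane $\{\beta_1u_1+\beta_2u_2=1\}$ then yields the desired smooth nowhere-vanishing sections in one stroke. Your partition-of-unity patching achieves the same thing but is slightly less clean: the existence of the global phase $g:\T\to\C^*$ with $g\cdot(P_+e_2/P_+e_1)>0$ on the overlap deserves one more sentence of justification (the overlap $U_1\cap U_2$ is a \emph{proper} open subset of $\T$, hence a disjoint union of arcs, so any prescription of $\arg g$ there extends smoothly across the complementary arcs). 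The paper's affine-section trick sidesteps this bookkeeping entirely.
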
 
\begin{proof} 
Let $C_{\nu}$ be the circle $(\R+i\nu)/\Z$.
For any $z\in C_{\nu}$  the eigenvalues $\l_{A}(z)$, $\l_{A}(z)^{-1}$ of $A(z)$, are distinct and we can thus define $E^\pm(z)=\ker (A(z)-\l_{A}^\pm(z)I)\subset\C^2$ the corresponding one-dimensional complex eigenspaces.  The maps $C_{\nu}\ni z\mapsto E^\pm(z)\in \mathbb{P}^1(\C)$ are smooth   and   define two smooth complex line bundles $E^+\to C_{\nu}$ and  $E^-\to C_{\nu}$. The existence of a smooth function $B_{\nu}:C_{\nu}\to GL(2,\C)$ such that   on $C_{\nu}$ one has $B_{\nu}(\cdot)^{-1}A(\cdot)B_{\nu}(\cdot)=D_{\l_{A}(\cdot)}$ is equivalent to the fact that there exist two nonzero smooth sections $C_{\nu}\ni z\mapsto u^\pm(z)\in E^\pm(z)$. To conclude, we could  just argue that any complex fibered bundle over the circle is equivalent to a trivial one (see for example \cite{Ha}) but here we can give a direct argument. Since the manifold $C_{\nu}$ has real  dimension 1, $\mathbb{P}^1(\C)$ has real dimension 2 and $E^{\pm}:C_{\nu}\to \mathbb{P}^1(\C)$ are smooth, there exists a complex line $\mathbb{P}^1(\C)\ni \beta=[\beta_{1},\beta_{2}]\notin E^\pm(C_{\nu})$. For  each point $t\in C_{\nu}$ the affine lines of $\C^2$, $E^{\pm}(t)$ and  $\{(u_{1},u_{2})\in\C^2:\beta_{1}u_{1}+\beta_{2}u_{2}=1\}$ intersect in a single point which depends smoothly on $t$. This defines the searched for smooth sections $u^\pm(\cdot)$. 

\end{proof}

We use the same notations as before. 

\begin{lem}\label{prop:mixed}Let $\ep>0$ and $0<\nu<h'<h$, $p/q\in\Q$, $A\in C^\omega_{h}(\T,SL(2,\R))$ and assume that  $(p/q,A)^q$ is   regular.  Then,  there exists $\eta_{0}:=\eta_{0}(A,q,\ep,\nu)>0$, continuous with respect to $A\in C^\omega_{h}(\T,SL(2,\R))$,  such that for any irrational $\a$ satisfying $0<|\a-(p/q)|<\eta_{0}$ one has
$$|LE((\a,A(\cdot+i\nu))^q)-LE((p/q,A(\cdot+i\nu))^q)|\leq \ep. $$
\end{lem}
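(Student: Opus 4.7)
The plan is to reduce matters to a small perturbation of a uniformly hyperbolic diagonal cocycle, leveraging the regularity hypothesis at height $\nu>0$. Write $M_\a(z):=A(z+(q-1)\a)\cdots A(z)$, so that $(\a,A)^q=(q\a,M_\a)$ and $(p/q,A)^q=(0,M_{p/q})$ (modulo $\Z$). By the regularity of $(p/q,A)^q$, the preceding diagonalization lemma supplies a smooth $B_\nu:\T\to GL(2,\C)$ with
$$B_\nu(x)^{-1}M_{p/q}(x+i\nu)B_\nu(x)=D_{\l(x+i\nu)},$$
where $\l:=\l_{M_{p/q}}$ and $|\l(x+i\nu)|\geq c_\nu>1$ uniformly in $x\in\T$ (since $0<\nu<h'$).

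I would then conjugate the $\a$-cocycle by $B_\nu$ to obtain
$$\ti M_\a(x):=B_\nu(x+q\a)^{-1}M_\a(x+i\nu)B_\nu(x).$$
Splitting
\begin{align*}
\ti M_\a - D_{\l(\cdot+i\nu)} &= \bigl[B_\nu(\cdot+q\a)^{-1}-B_\nu(\cdot)^{-1}\bigr]M_\a(\cdot+i\nu)B_\nu(\cdot)\\
&\quad + B_\nu(\cdot)^{-1}\bigl[M_\a(\cdot+i\nu)-M_{p/q}(\cdot+i\nu)\bigr]B_\nu(\cdot),
\end{align*}
and using the smoothness of $B_\nu$ (recall $q\a\equiv q(\a-p/q)\bmod\Z$) together with the uniform continuity on $\Omega_\nu$ of the finite product $M_\a$ with respect to $\a$, one gets
$$\|\ti M_\a - D_{\l(\cdot+i\nu)}\|_{C^0}\leq C(A,q,\nu)\,|\a-p/q|.$$

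The key observation is that $(q\a,D_{\l(\cdot+i\nu)})$ is uniformly hyperbolic: the coordinate axes form a continuous invariant splitting with expansion/contraction rate bounded below by $\log c_\nu>0$. Because uniform hyperbolicity is an open condition in the $C^0$ topology on cocycles and the Lyapunov exponent depends continuously on the cocycle on the UH locus (a classical fact, valid for $SL(2,\C)$-cocycles with dominated splitting), there exists $\eta_0(A,q,\ep,\nu)>0$, depending continuously on $A\in C^\omega_h(\T,SL(2,\R))$, such that whenever $|\a-p/q|<\eta_0$ the cocycle $(q\a,\ti M_\a)$ is still UH and
$$\bigl|LE(q\a,\ti M_\a)-LE(q\a,D_{\l(\cdot+i\nu)})\bigr|\leq\ep.$$

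To conclude, conjugation by $B_\nu$ preserves the Lyapunov exponent, so $LE(q\a,\ti M_\a)=LE((\a,A(\cdot+i\nu))^q)$. On the other hand, by unique ergodicity of the irrational translation by $q\a$ (applied to the scalar multiplicative cocycle $\l(\cdot+i\nu)$ on the expanding line) we have $LE(q\a,D_{\l(\cdot+i\nu)})=\int_\T\log|\l(x+i\nu)|\,dx=LE((p/q,A(\cdot+i\nu))^q)$, which gives the desired estimate. The main obstacle is the appeal to continuity of $LE$ on the UH locus for our $GL(2,\C)$-conjugated (still $SL(2,\C)$-valued) cocycles; this is however a direct consequence of the stability of the dominated splitting, and the remaining steps are routine perturbation estimates.
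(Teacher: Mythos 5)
Your proposal is correct and takes essentially the same approach as the paper: diagonalize $M_{p/q}(\cdot+i\nu)$ via the smooth $B_\nu$ from the preceding lemma, observe that the $B_\nu$-conjugate of the $\alpha$-cocycle is a $C^0$-perturbation of size $O(|\alpha-p/q|)$ of the uniformly hyperbolic diagonal cocycle $(q\alpha,D_{\l(\cdot+i\nu)})$, and invoke openness of UH together with continuity of the Lyapunov exponent on the UH locus. You are slightly more explicit than the paper in spelling out the identification $LE(q\alpha,D_{\l(\cdot+i\nu)})=\int_\T\log|\l(x+i\nu)|\,dx=LE((p/q,A(\cdot+i\nu))^q)$ via unique ergodicity, but this is the same argument.
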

\begin{proof}Define $A^{(q)}$ by $(p/q,A)^q=(0,A^{(q)})$; by assumption this is a regular map.
The previous Lemma shows that there exists $B_{\nu}\in C^\infty(\T,GL(2,\C))$ such that  $(0,B_{\nu})^{-1}\circ (p/q,A_{\nu})^q\circ (0,B_{\nu})=(0, D_{\l_{A^{(q)}}(\cdot+i\nu)}(\cdot))$, thus
$$(0,B_{\nu})^{-1}\circ (\a,A_{\nu})^q\circ (0,B_{\nu})=(q\a, D_{\l_{A^{(q)}}(\cdot+i\nu)}(\cdot)P(\cdot))$$ where the perturbation $P:\T\to SL(2,\R)$ satisfies 
$$\|P-I\|_{0}\lesssim  q|\a-p/q|(1+\|A\|_{h})^qh^{-2}\|B_{\nu}\|_{0}\|B_{\nu}^{-1}\|_{0}.$$ Since the matrices $D_{\l_{A^{(q)}}(\cdot+i\nu)}$ are uniformly hyperbolic ($\min_{\T}|\lambda_{A^{(q)}}(\cdot+i\nu)|>0$) it is not difficult to show that the cocycle $(\a,A_{\nu})^q$ is uniformly hyperbolic if $q|\a-p/q|$ is small enough and that its Lyapunov exponent is $\ep$-close to 
$LE((p/q,A(\cdot+i\nu))^q)$. 
\end{proof}

\begin{prop}Let $q\in\N^*$,  $A\in C^\omega_{h}(\T,SL(2,\R))$ be such that $(p/q,A)^q$ is    regular and let $\ep>0$.  Then,  there exists $\eta_{1}:=\eta_{1}(A,h,q, \ep)>0$, continuous with respect to $A\in C^\omega_{h}(\T,SL(2,\R))$,   such that for any $0<|\a-(p/q)|<\eta_{1}$  one has 
$$qLE(\a,A)\geq LE((p/q,A)^q)-\ep.$$
\end{prop}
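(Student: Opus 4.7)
The plan is to exploit the convexity of the function
$$L_\alpha(\nu) := LE(\alpha, A(\cdot + i\nu)), \quad \nu \in [-h, h],$$
to transfer a lower bound from a strictly positive imaginary height $\nu_0>0$ (where the $q$-iterated cocycle becomes uniformly hyperbolic, so that Lyapunov exponents depend continuously on the frequency) down to the real axis $\nu = 0$. All three ingredients are already available in the excerpt: convexity and evenness of $L_\alpha$ (Lemma \ref{lem:cvx}), affineness and monotonicity of $L_0$ on $[0, h']$ arising from regularity (Lemma \ref{lem:reglem}), and approximation of $L_{p/q}$ by $L_\alpha$ at positive height for $\alpha$ near $p/q$ (Lemma \ref{prop:mixed}).

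Concretely, I would proceed in three steps. First, fix a small $\nu_0 \in (0, h')$ to be determined later (the height $h'$ on which $A^{(q)}$ admits a holomorphic eigenvalue is continuous in $A$). By Lemma \ref{lem:reglem} applied to the regular map $A^{(q)}$, the function $L_0$ is affine and non-decreasing on $[0, h']$, so
$$L_0(\nu_0) \geq L_0(0) = \frac{1}{q}LE((p/q, A)^q).$$
By Lemma \ref{prop:mixed} (with $\epsilon/2$ in place of $\epsilon$) there exists $\eta_0(A, q, \epsilon/2, \nu_0) > 0$, continuous in $A$, such that for any irrational $\alpha$ with $0 < |\alpha - p/q| < \eta_0$,
$$|qL_\alpha(\nu_0) - qL_0(\nu_0)| \leq \epsilon/2.$$

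Second, by Lemma \ref{lem:cvx}, $L_\alpha$ is convex and even on $[-h, h]$, so convexity between the endpoints $0$ and $h$ yields
$$L_\alpha(\nu_0) \leq \bigl(1 - \tfrac{\nu_0}{h}\bigr) L_\alpha(0) + \tfrac{\nu_0}{h} L_\alpha(h) \leq L_\alpha(0) + \tfrac{\nu_0}{h}\log\|A\|_h,$$
where I used $L_\alpha(h) \leq \log\|A\|_h$ (via the submultiplicativity of the operator norm) and $L_\alpha(0) \geq 0$. Rearranging and multiplying by $q$:
$$qL_\alpha(0) \geq qL_\alpha(\nu_0) - \frac{q\nu_0}{h}\log\|A\|_h.$$
Combining with the two previous inequalities,
$$qLE(\alpha, A) = qL_\alpha(0) \geq LE((p/q, A)^q) - \frac{\epsilon}{2} - \frac{q\nu_0}{h}\log\|A\|_h.$$

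Third, choose $\nu_0 = \nu_0(A, h, q, \epsilon)$ small enough that $q \nu_0 \log\|A\|_h / h \leq \epsilon/2$; this can be done continuously in $A$. Setting $\eta_1 := \eta_0(A, q, \epsilon/2, \nu_0)$ delivers the statement. Continuity of $\eta_1$ in $A$ follows from continuity of $\nu_0$ in $A$ (it involves only $\log\|A\|_h$ and $h'(A)$), combined with the continuity of $\eta_0$ in Lemma \ref{prop:mixed}.

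The main pitfall to be careful about is the direction of the convexity inequality: evenness together with convexity only gives $L_\alpha(0) \leq L_\alpha(\nu_0)$, which is useless for a lower bound on $L_\alpha(0)$. The correct use of convexity compares the segment from $0$ to $h$, providing an upper bound on $L_\alpha(\nu_0)$ in terms of the tractable endpoint value $L_\alpha(h) \leq \log\|A\|_h$. Once this is set up properly, the passage from rational to nearby irrational frequencies is delivered by Lemma \ref{prop:mixed}, and the only remaining arithmetic is the balance between $\epsilon/2$ (controlling the frequency perturbation at height $\nu_0$) and $q\nu_0 \log\|A\|_h/h$ (controlling the descent from height $\nu_0$ to height $0$).
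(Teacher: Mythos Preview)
Your argument is correct and reaches the conclusion, but it takes a genuinely different route from the paper's proof. The paper fixes \emph{two} heights $\nu_{1}=h'/4$ and $\nu_{2}=h'/2$, applies Lemma \ref{prop:mixed} at both, and then uses convexity of $\nu\mapsto \ti L_{q\a}(\nu):=qL_{\a}(\nu)$ in the extrapolation form: for $t<0$,
$$\ti L_{q\a}(\nu_{1}+t(\nu_{2}-\nu_{1}))\geq \ti L_{q\a}(\nu_{1})+t\bigl(\ti L_{q\a}(\nu_{2})-\ti L_{q\a}(\nu_{1})\bigr).$$
Setting $t=-1$ lands at $\nu=0$; since $\ti L_{0}$ is affine (Lemma \ref{lem:reglem}) the right-hand side equals $\ti L_{0}(0)$ up to an $O(\ep)$ perturbation error. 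By contrast, you work with a \emph{single} height $\nu_{0}$, invoke convexity on the segment $[0,h]$ with the crude endpoint bound $L_{\a}(h)\leq \log\|A\|_{h}$, and then let $\nu_{0}\to 0$ to kill the resulting loss $q\nu_{0}\log\|A\|_{h}/h$. The paper's two-point secant trick exploits the affineness of $\ti L_{0}$ more fully: the auxiliary heights are fixed fractions of $h'$ and no a priori estimate on $\|A\|_{h}$ is needed, which makes the continuity-in-$A$ bookkeeping slightly cleaner. Your version is more elementary (one application of Lemma \ref{prop:mixed} instead of two) but introduces an extra parameter tied to $\|A\|_{h}$. Both are valid; the only cosmetic point is that your ``$L_{0}$'' should really be denoted $L_{p/q}$ in your own notation, to avoid clashing with the $\a=0$ case of the definition $L_{\a}(\nu)=LE(\a,A(\cdot+i\nu))$.
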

\begin{proof}Choose $0<\nu_{1}<\nu_{2}<h'$ with $\nu_{1}=h'/4$, $\nu_{2}=h'/2$ and introduce
$$\ti L_{q\a}(\nu)=LE((\a,A(\cdot+i\nu))^q)=qLE(\a,A(\cdot+i\nu))$$
$$ \ti L_{0}(\nu)=LE((p/q,A(\cdot+i\nu))^q)=LE(0,A^{(q)}(\cdot+i\nu)).$$
From Lemma \ref{lem:cvx} we know that $\nu\mapsto \ti L_{q\a}(\nu)$ is a convex function and by Lemma \ref{lem:reglem} that $\nu\mapsto \ti L_{0}(\nu)$ is an affine function.
We now define  $\e_{1}=\min(\e_{0}(A,\nu_{1},\ep/3),\e_{0}(A,\nu_{2},\ep/3))$ and we apply Lemma \ref{prop:mixed}:  for any irrational $0<|\a-(p/q)|<\eta_{1}$, $|\ti L_{q\a}(\nu_{j})- \ti L_{0}(\nu_{j})|\leq \ep$, $j=1,2$. 
The function $\nu\mapsto \ti L_{q\a}(\nu)$ is convex  on $\nu\geq 0$, hence for $t<0$, 
\begin{align*}  \ti L_{q\a}(\nu_{1}+t(\nu_{2}-\nu_{1}))&\geq  \ti L_{q\a}(\nu_{1})+t(\ti L_{q\a}(\nu_{2})-\ti L_{\a}(\nu_{1}))\\
&\geq   \ti L_{0}(\nu_{1})+t(\ti L_{0}(\nu_{2})-\ti L_{0}(\nu_{1}))+(2t-1)\ep/3
\end{align*}
and since $\nu\mapsto \ti L_{0}(\nu)$ is affine
one has for $t=-\nu_{1}/(\nu_{2}-\nu_{1})=-1$, 
$$\ti L_{q\a}(0)\geq \ti L_{0}(0)-\ep.$$
 \end{proof}

\subsection{Proof of  Theorem \ref{theo:5.1}}
We can now complete the proof of Theorem \ref{theo:5.1}. Indeed, if $A^{(q)}$ is regular  and of mixed type $LE(0,A^{(q)})>0$ and we can apply the previous Proposition with $\ep=LE(0,A^{(q)})/2$.
\section{Coexistence of ac components and pp components}
Let  $(p,q)\in\Z\times \N^*$, $p\wedge q=1$, and $V:\T \to \R$ a nonnegative smooth function the support of which is included in $]0,1/q[$. We shall assume in Sections \ref{sec:5.3} and \ref{sec:5.4} that in addition $V\in \cP^\infty$. 
\subsection{Computation of $(p/q,S_{E-V})^q$}

We introduce the matrices  $$C_{E}=\bm E& -1\\ 1& 0\em,\quad S_{E-V}(x)=\bm E-V(x)& -1\\ 1& 0\em$$ and 
$$A_{E}^{(q)}(x)=S_{E-V}(x+(q-1)\frac{p}{q})\cdots S_{E-V}(x).$$
Denote by $I_{j}$ the arc $[j/q,(j+1)/q]$, $0\leq j\leq q-1$ and  let $\bar j\in \{0,\ldots,j-1\}$ be such that $\bar j p\equiv j\mod q$ (if $\bar p$ is the inverse of $p$ in $\Z/q\Z$, $\bar j= j\bar p$). 
Notice that  if $x\in I_{j}$, 
\be A_{E}^{(q)}(x)=C_{E}^{q-1-\bar j}S_{E-V}(\ti x_{j})C_{E}^{\bar j} \label{1}\ee 
where $\ti x$ is the point of the orbit of $x$ under $x\mapsto x+(p/q)$ that lies in $I_{0}$ and as a consequence the trace $\tau_{E}(x)$  of $A_{E}^{(q)}(x)$ is equal to  the trace of $\bm E-V(\ti x) & -1\\ 1& 0\em\bm E & -1\\ 1& 0\em^{q-1}$.
\begin{lem}\label{lem:6.1}If $E=2\cos\th$ (resp. $E=2\cosh\th$) one has 
$$\tr(A_{E}^{(q)}(x))=-V(\ti x) \frac{\sin(q\th)}{\sin\th}+2\cos (q\th).$$
(resp. 
$$\tr(A_{E}^{(q)}(x))=-V(\ti x) \frac{\sinh(q\th)}{\sinh\th}+2\cosh (q\th).)$$

\end{lem}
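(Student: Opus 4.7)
The plan is to reduce the computation to a single trace on the real axis via cyclicity and then exploit the Cayley--Hamilton identity for $C_E$.

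First, since $A_E^{(q)}(x) = C_E^{q-1-\bar j} S_{E-V}(\ti x_j) C_E^{\bar j}$ by formula (1), cyclicity of the trace immediately gives
$$\tr(A_E^{(q)}(x)) = \tr\bigl(S_{E-V}(\ti x_j)\,C_E^{q-1}\bigr),$$
which no longer depends on $\bar j$. So I am reduced to computing a single trace involving only $\ti x:=\ti x_j\in I_0$.

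Next, I would decompose $S_{E-V}(\ti x) = C_E - V(\ti x)\,P$, where $P = \bigl(\begin{smallmatrix}1&0\\0&0\end{smallmatrix}\bigr)$. Linearity of the trace then yields
$$\tr\bigl(S_{E-V}(\ti x)C_E^{q-1}\bigr) = \tr(C_E^q) - V(\ti x)\,\tr(P C_E^{q-1}) = \tr(C_E^q) - V(\ti x)\,(C_E^{q-1})_{11}.$$
So everything is reduced to computing $\tr(C_E^q)$ and the $(1,1)$-entry of $C_E^{q-1}$.

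For this, I use that $C_E\in SL(2,\R)$ with $\tr C_E = E$, so Cayley--Hamilton gives $C_E^2 = E C_E - I$, hence by induction $C_E^n = s_n C_E - s_{n-1} I$, where the sequence $s_n$ satisfies $s_{n+1} = E s_n - s_{n-1}$, $s_0=0$, $s_1=1$. Setting $E = 2\cos\theta$, one checks that $s_n = \sin(n\theta)/\sin\theta$ (and in the hyperbolic case $E=2\cosh\theta$, that $s_n = \sinh(n\theta)/\sinh\theta$). From this and the standard product-to-sum identity $2\cos\theta\sin(n\theta) = \sin((n+1)\theta)+\sin((n-1)\theta)$, a short computation yields
$$\tr(C_E^q) = s_q E - 2 s_{q-1} = 2\cos(q\theta),$$
$$(C_E^{q-1})_{11} = s_{q-1} E - s_{q-2} = \frac{\sin(q\theta)}{\sin\theta}.$$
Substituting into the previous display gives exactly the claimed formula, and the hyperbolic analogue follows by replacing the sine/cosine identities by their hyperbolic counterparts verbatim.

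There is no real obstacle here: the only thing to keep track of is the indexing, and the reason the awkward exponents $\bar j$, $q-1-\bar j$ disappear is purely the cyclicity of the trace applied to formula (1). The core identity is the Chebyshev-type formula $C_E^n = s_n C_E - s_{n-1}I$, which holds for any $SL(2)$-matrix of given trace.
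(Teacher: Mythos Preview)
Your proof is correct. The paper takes a different route: rather than using Cayley--Hamilton and the Chebyshev-type recursion $C_E^n = s_n C_E - s_{n-1} I$, it diagonalizes $C_E$ explicitly by writing down eigenvectors $\bigl(\begin{smallmatrix} e^{\pm i\theta} \\ 1 \end{smallmatrix}\bigr)$, computes the full matrix $C_E^{q-1}$ entry by entry as $\frac{1}{\sin\theta}\bigl(\begin{smallmatrix} \sin(\theta+\varphi) & -\sin\varphi \\ \sin\varphi & \sin(\theta-\varphi)\end{smallmatrix}\bigr)$ with $\varphi=(q-1)\theta$, and then carries out the matrix product $S_{E-V}(\tilde x)\,C_E^{q-1}$ directly before reading off the trace. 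Your approach is cleaner for the purpose at hand: the decomposition $S_{E-V} = C_E - V(\tilde x)\,P$ together with linearity of the trace reduces the problem to two scalar quantities, and the recursion avoids eigenvectors and complex numbers altogether. The paper's diagonalization has the minor advantage of producing the entire product matrix, not just its trace, but that extra information is not used elsewhere in the paper.
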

\begin{proof}
We assume that $E=2\cos \th$, so the matrix $C_{E}:=\bm E & -1\\ 1& 0\em$ is elliptic with eigenvalues $e^{\pm i \th}$. We find that 
$\bm m \\ 1\em$ and $\bm \bar m \\ 1\em$ are the eigenvectors corresponding to $e^{i\th}$ and $e^{-i\th}$ where $m=e^{i\th}$. Hence 
$$C_{E}=\frac{1}{m-\bar m}\bm m& \bar m\\ 1&1\em\bm e^{i\th} & 0\\ 0& e^{-i\th}\em \bm 1 &-\bar m\\ -1& m\em$$
and if we  denote $\ph=(q-1)\th$
$$C_{E}^{q-1}=\frac{1}{2i\sin\th }\bm e^{i\th}& e^{-i\th}\\ 1&1\em\bm e^{i\ph} & 0\\ 0& e^{-i\ph}\em\bm 1& -e^{-i\th}\\ -1& e^{i\th}\em.$$ 
We find
$$C_{E}^{q-1}=\frac{1}{\sin\th}\bm\sin(\th+\ph)& -\sin(\ph)\\ \sin\ph & \sin(\th-\ph)\em.$$
So
\begin{multline*}\bm E-V(\ti x) & -1\\ 1& 0\em\bm E & -1\\ 1& 0\em^{q-1}=\\\frac{1}{\sin\th}\bm(E-V)\sin(\th+\ph)-\sin\ph& -(E-V)\sin\ph-\sin(\th-\ph) \\ \sin(\th+\ph) & -\sin\ph\em.
\end{multline*}
Using that $\ph=(q-1)\th$ and $E=2\cos\th$ this last matrix can be written
$$\bm\cos(q\th)+(\cos\th-V)\displaystyle\frac{\sin(q\th)}{\sin\th}& V\biggl(\displaystyle\frac{\sin(q\th)}{\sin\th}\cos\th-\cos(q\th)\biggr) \\ \displaystyle\frac{\sin(q\th)}{\sin\th} & \cos(q\th)-\displaystyle\frac{\sin(q\th)}{\sin(\th)}\cos\th\em,
$$
and thus
$$\tau_{E}(x)=-V(\ti x) \frac{\sin(q\th)}{\sin\th}+2\cos (q\th).$$
Notice that the same computations with $E=2\cosh\th$ will give
$$\tau_{E}(x)=-V(\ti x) \frac{\sinh(q\th)}{\sinh\th}+2\cosh (q\th).$$

\end{proof}

\subsection{Existence of ac spectrum}
Let $\th_{k}=\pi k/q$, $0\leq k\leq 2q-1$. Close to each $\th_{k}$, $1\leq k\leq 2q-1$,  there exist  closed intervals with nonempty interior $J_{k}$, such that for every $\th\in J_{k}$
\begin{itemize}
\item The quantities $\sin(q\th)/\sin\th$ and $\cos(q\th)$ have the same sign;
\item $(\max_{\T}V)|\sin(q\th)/\sin(\th)|<1/2$;
\item $2>|2\cos(q\th)|>3/2$.
\end{itemize}
Let $\delta_{k}=\min_{\th\in J_{k}}2-2|\cos(q\th)|$.
\begin{lem}For any $E\in 2\cos J_{k}$, the cocycle $(p/q,S_{E-V})^q$ is totally elliptic and $E\mapsto \rho(p/q,S_{E-V})$ is a real analytic  non constant function on  $2\cos J_{k}$.
\end{lem}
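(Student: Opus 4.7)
The strategy is to extract both claims from direct analysis of the trace formula of Lemma~\ref{lem:6.1}. Writing $E=2\cos\theta$ with $\theta\in J_k$,
$$
\tau_E(x):=\tr A_E^{(q)}(x)=2\cos(q\theta)-V(\tilde x)\,\frac{\sin(q\theta)}{\sin\theta}.
$$
Since $V\geq 0$, condition (i) forces $V(\tilde x)\sin(q\theta)/\sin\theta$ to share the sign of $\cos(q\theta)$; combined with $(\max_\T V)|\sin(q\theta)/\sin\theta|<1/2$ from (ii) and $|2\cos(q\theta)|\in(3/2,2)$ from (iii), a case split on $\mathrm{sgn}(\cos(q\theta))$ gives $|\tau_E(x)|<2$ for every $x\in\T$. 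This is total ellipticity of $(p/q,S_{E-V})^q$ throughout $2\cos J_k$.

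For real analyticity, $A_E^{(q)}(x)$ is polynomial in $E$ with smooth-in-$x$ coefficients, so $\tau_E(x)$ is polynomial in $E$ and $\psi_E(x):=\arccos(\tau_E(x)/2)\in(0,\pi)$ is real analytic in $E$ and smooth in $x$ on $2\cos J_k$ (where $|\tau_E|$ stays uniformly bounded away from $2$). The smooth diagonalization invoked in Proposition~\ref{prop:perapprox} yields $A_E^{(q)}=P_E R_{\psi_E}P_E^{-1}$ pointwise in $x$, and since such pointwise conjugation leaves the fibered rotation number of $(0,\,\cdot\,)$ unchanged, the lift $\bar\rho(p/q,S_{E-V})=(1/q)\bar\rho(0,A_E^{(q)})$ equals
$$
\bar\rho(p/q,S_{E-V})=\frac{1}{2\pi q}\int_\T\psi_E(x)\,dx,
$$
a real-analytic function of $E\in 2\cos J_k$ by differentiation under the integral sign (the uniform analytic radius coming from the uniform bound $|\tau_E|\leq 2-\delta$).

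For non-constancy, differentiate $\tau_E=2\cos\psi_E$ to get $\partial_E\psi_E=-\partial_E\tau_E/(2\sin\psi_E)$, with $\sin\psi_E$ uniformly positive. Via $\partial_E=-(2\sin\theta)^{-1}\partial_\theta$,
$$
\partial_E\tau_E(x)=\frac{q\sin(q\theta)}{\sin\theta}+V(\tilde x)\,\frac{q\cos(q\theta)\sin\theta-\sin(q\theta)\cos\theta}{2\sin^3\theta}.
$$
The first summand, constant in $x$, has the sign of $\cos(q\theta)$ by condition (i) and is nonzero (since $|\cos(q\theta)|>3/4$). At $\theta=\theta_k=\pi k/q$ the bracket equals $q(-1)^k\sin\theta_k$, which has sign $(-1)^k=\mathrm{sgn}(\cos(q\theta))$; shrinking $J_k$ if necessary to a slightly smaller subinterval (still meeting (i)--(iii), which only constrains $J_k$ from above in size) preserves this sign throughout, so the second summand, multiplied by the nonnegative $V(\tilde x)$ and positive $(2\sin^3\theta)^{-1}$, shares the common sign of the first summand or vanishes. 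Thus $\partial_E\tau_E(x)$ has constant nonzero sign on $\T$, so $\partial_E\bar\rho\neq 0$ and $\rho$ is non-constant. The principal subtlety lies in the joint $(E,x)$-regularity of the diagonalization identifying $\bar\rho(0,A_E^{(q)})$ with $\int\psi_E/(2\pi)$, handled by the explicit spectral projector formula $\mathrm{Proj}_\pm=(A_E^{(q)}-e^{\mp i\psi_E}I)/(e^{\pm i\psi_E}-e^{\mp i\psi_E})$ and its recombination into a real smooth conjugator.
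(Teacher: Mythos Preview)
Your argument is correct and follows the paper's approach: both parts come from the trace formula of Lemma~\ref{lem:6.1}, and both you and the paper identify $\rho(p/q,S_{E-V})=\frac{1}{2\pi q}\int_\T\arccos(\tau_E(x)/2)\,dx$. For total ellipticity your case split on $\mathrm{sgn}(\cos q\theta)$ is exactly the inequality $|\tau_E|\leq 2-\delta_k$ that the paper records in one line.

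The only substantive difference is in the non-constancy claim. The paper simply asserts that the integral is ``certainly'' non-constant real-analytic, implicitly relying on the standard monotonicity of $E\mapsto\rho(\alpha,S_{E-V})$ for Schr\"odinger cocycles (strict on the spectrum, and total ellipticity places $2\cos J_k$ inside the periodic spectrum). You instead compute $\partial_E\tau_E$ directly and argue it has constant sign; this is more self-contained but also more laborious. Two small points in your computation: (a) the factor $(2\sin^3\theta)^{-1}$ is \emph{not} positive for $\theta\in(\pi,2\pi)$, though when you track the sign of the bracket $q(-1)^k\sin\theta_k$ together with it the conclusion survives; (b) your ``shrinking $J_k$'' step is harmless because analyticity means non-constancy on a subinterval propagates to all of $2\cos J_k$, but you should say this explicitly since the lemma is stated for the given $J_k$. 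Also, invoking Proposition~\ref{prop:perapprox} and spectral projectors to justify the $\arccos$ formula is heavier than needed: by the paper's definition of $\bar\rho(0,\cdot)$ you are simply integrating the rotation number of the fixed elliptic matrix $A_E^{(q)}(x)$ over $x$, and that rotation number is $\frac{1}{2\pi}\arccos(\tau_E(x)/2)$ directly.
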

\begin{proof}
For the first part of the statement we observe that 
 since the function $V$ is nonnegative we have by Lemma \ref{lem:6.1} that  for any $\th\in J_{k}$ and any $x\in\T$
$$|\tr(A_{E}^{(q)}(x))|\leq 2-\delta_{k}.
$$
For the second part we observe that
$$\rho(p/q,S_{E-V})=(1/q)\rho(0,A_{E}^{(q)})$$
and notice that if $\tau_{E}(x)=\tr(A_{E}^{(q)}(x))$
$$\rho(0,A_{E}^{(q)})=\frac{1}{2\pi}\int_{\T}\arccos( \tau_{E}(x)/2)dx$$
which is certainly a non-constant real analytic function of $E\in 2\cos(J_{k})$.
\end{proof}

We now get as a corollary of Theorem \ref{theo:mainac}:
\begin{cor}There exist $\eta_{ac}>0$ and in each interval $2\cos J_{k}$ a set  $\cA_{k}$ of positive Lebesgue measure  such that for all $\a \in D_{p/q}(\eta_{ac})$ and all $E\in \bigcup_{k}\cA_{k}$, the cocycle $(\a, S_{E-V})$ is $C^\infty$ conjugated to a constant  elliptic cocycle with rotation number in $DS_{\a}\setminus\{0\}$. The same conclusion is true for any smooth perturbation $\ti V$ of $V$ which is  sufficiently $C^{s_{0}}$-close to  $V$.
\end{cor}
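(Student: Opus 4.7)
The plan is to apply Theorem \ref{theo:mainac} separately on each compact interval $I_k := 2\cos J_k$, with the parametrized family of cocycles $A_E := S_{E-V}$. The two nontrivial hypotheses of that theorem have already been verified in the preceding lemma: $(p/q, A_E)^q$ is totally elliptic on $I_k$ (with uniform margin $|\tr| \leq 2-\d_k$), and $E \mapsto \rho(p/q, A_E)$ is a non-constant real analytic function on $I_k$. The remaining regularity conditions---that $(E,x) \mapsto A_E(x)$ is $C^1$ in $E$, smooth in $x$, and homotopic to the identity---are immediate for Schr\"odinger matrices, since $S_{E-V}$ deforms continuously to the constant loop $x \mapsto S_E$ through $(t,x) \mapsto S_{E-tV(x)}$, and $SL(2,\R)$ is connected.

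Setting $\eta_{ac} := \min_{1 \leq k \leq 2q-1} \ep_0(\max_{E \in I_k} \|S_{E-V}\|_{C^{s_0}}, q)$, where $\ep_0$ is the threshold from Theorem \ref{theo:mainac}, the theorem yields, for each $\a \in D_{p/q}(\eta_{ac})$ and each $k$, a positive-measure set $\cA_k = \cA_k(\a) \subset I_k$ on which $(\a, S_{E-V})$ is $C^\infty$-conjugated to a constant elliptic cocycle. The additional condition $\rho(\a, S_{E-V}) \in DS_\a \setminus \{0\}$ is actually built into the construction of $\cA_k$ carried out in Section \ref{sec:3.5}: there, $\cA_k$ is exhibited as the preimage under $f(E) := \rho(\a, S_{E-V})$ of $DS_\a(\kappa) \cap J$, where $J$ is a nonempty open subinterval of the range of $\rho(p/q, \cdot)$ on $I_k$. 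One can arrange $0 \notin J$ by shrinking $J_k$ if necessary, since on $I_k$ the rotation number clusters near a nonzero value modulo $1$ (roughly $k/(2q)$ for $k = 1, \ldots, 2q-1$).

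For the perturbation statement, each input to the argument above is robust under small $C^{s_0}$-perturbations of $V$: total ellipticity of $(p/q, S_{E-\ti V})^q$ persists (with $\d_k/2$ in place of $\d_k$) by $C^0$-continuity of the trace; non-constancy of $E \mapsto \rho(p/q, S_{E-\ti V})$ on a suitable open sub-interval of $J$ follows from continuity of the rotation number with respect to the cocycle in $C^0$; and the $C^{s_0}$-bound controlling the threshold $\ep_0$ is preserved up to a negligible increase. Shrinking $\eta_{ac}$ by a fixed factor absorbs these shifts, so the identical argument applies to every $\ti V$ sufficiently $C^{s_0}$-close to $V$. The only point worth flagging---hardly a true obstacle---is ensuring the nonzero rotation number condition, which as noted is built into the construction of $\cA_k$ rather than being automatic from the bare statement of Theorem \ref{theo:mainac}.
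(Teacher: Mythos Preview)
Your proposal is correct and follows the same approach as the paper, which simply records the corollary as an immediate consequence of Theorem \ref{theo:mainac} applied on each $I_k=2\cos J_k$. You have usefully spelled out the details the paper leaves implicit, in particular that the rotation-number condition $\rho\in DS_\a\setminus\{0\}$ comes from the construction in Section \ref{sec:3.5} rather than from the bare statement of Theorem \ref{theo:mainac}; note also that the sets $\cA_k$ depend on $\a$, as you indicate, despite the slightly ambiguous phrasing of the corollary.
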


We  deduce from the last corollary and the criterium for ac spectrum  Theorem \ref{critac} the following corollary.
\begin{cor}\label{statementac}Let $\ti V\in C^\omega(\T,\R)$ be  such that $\|\ti V-V\|_{C^{s_{0}}}$ is small enough. Then, for any $\a\in D_{p/q}(\eta_{ac})$ there exist nontrivial intervals $I_{ac,k}\subset 2\cos J_{k}$ such that  for any $x\in\T$ the restriction of the spectral measure  $\mu_{\ti V,\a,x}$ on  each interval $I_{ac,k}$ is ac.
\end{cor}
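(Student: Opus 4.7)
The proof is a direct application of the preceding corollary combined with the extension of Eliasson's Theorem (Theorem \ref{critac}). The plan is to pick a good energy $E_*^{(k)}$ in each piece $2\cos J_k$, upgrade the reducibility at that point to a whole interval of absolutely continuous spectrum via Theorem \ref{critac}, and then shrink this interval into $2\cos J_k$.

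First I would fix $\ti V\in C^\omega(\T,\R)$ sufficiently $C^{s_0}$-close to $V$ and $\a\in D_{p/q}(\eta_{ac})$. By definition of $D_{p/q}(\eta_{ac})$ the frequency $\a$ lies in $DC_1(\eta_{ac}^2,3)$, hence is diophantine, so the arithmetic hypothesis of Theorem \ref{critac} is met. The preceding corollary then provides, for each admissible index $k$, a subset $\cA_k\subset 2\cos J_k$ of positive Lebesgue measure such that for every $E\in\cA_k$ the cocycle $(\a,S_{E-\ti V})$ is $C^\infty$-conjugated to a constant elliptic cocycle with nonzero rotation number in $DS_\a$.

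Next, since $\cA_k$ has positive measure it meets the interior of $2\cos J_k$; choose $E_*^{(k)}\in\cA_k\cap\interior(2\cos J_k)$. At this energy the Schr\"odinger cocycle is $C^\infty$-reducible, a fortiori $C^\infty$-almost-reducible. Apply Theorem \ref{critac} with $V$ replaced by $\ti V\in C^\omega\subset C^\infty$ and $E_*=E_*^{(k)}$: it yields a nonempty open interval $I'_{ac,k}\ni E_*^{(k)}$ such that for every $x\in\T$ the restriction of $\mu_{\ti V,\a,x}$ to $I'_{ac,k}$ is nontrivial and absolutely continuous. Shrinking $I'_{ac,k}$ around $E_*^{(k)}$ if necessary produces a nontrivial open interval $I_{ac,k}\subset 2\cos J_k$ on which the same ac conclusion holds, which is exactly the assertion of the corollary.

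There is no substantive obstacle to overcome here: the hard analytic content has been packaged upstream. The previous corollary carries all the weight of the Cheap Trick (Proposition \ref{prop:main}) and of the non-constancy of the rotation number on $2\cos J_k$, while Theorem \ref{critac} turns the almost-reducibility at a single energy into an interval of ac spectrum uniformly in the phase $x$. The only minor point to watch is that the interval supplied by Theorem \ref{critac} need not initially lie inside $2\cos J_k$, but since it is open and we chose $E_*^{(k)}$ in the interior, a trivial shrinking resolves this.
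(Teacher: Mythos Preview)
Your proposal is correct and follows exactly the approach the paper intends: the paper's own proof is the single sentence ``We deduce from the last corollary and the criterium for ac spectrum Theorem \ref{critac} the following corollary,'' and you have simply spelled out this deduction in detail. The only cosmetic point is that Theorem \ref{critac}, read as a black box, does not literally assert $E_*\in I_{ac}$, so ``shrinking around $E_*^{(k)}$'' is better phrased as noting that the interval produced in its proof can be taken near $E_*^{(k)}$ (hence inside $2\cos J_k$); this is a harmless rewording, not a gap.
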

\subsection{Existence of pp spectrum}\label{sec:5.3}

Let $\ti V\in C^\omega(\T,\R)$ be such that $\|\ti V-V\|_{C^{s_{0}}}$ is small enough; there exists $h>0$ such that $\ti V\in C^\omega_{h}(\T,\R)$.  We define $\ti A_{E}^{(q)}$ by $(p/q,S_{E-\ti V})^q=(0,\ti A_{E}^{(q)})$ and we recall that we denote by $K:=K(V)=\max_{\T} V$.
\begin{lem} For $E=2\cosh\th\in [10,K-10]$ the map $\ti A^{(q)}_{E}$ is regular and of mixed type.
\end{lem}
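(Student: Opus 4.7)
The plan is to establish both properties first for the (smooth) unperturbed potential $V$ using the explicit trace formula of Lemma \ref{lem:6.1}, and then transfer them to the analytic perturbation $\ti V$ by a $C^1$-continuity argument, applying the criterion of Lemma \ref{lem:reg} at the end. The main algebraic step is the strict inequality $\tau_E(x_*) < -2$ at the peak; everything else is continuity.

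First I would set $E=2\cosh\theta$ (so $\theta>0$ since $E\geq 10$) and, by Lemma \ref{lem:6.1}, write
$$\tau_E(x):=\tr A_E^{(q)}(x)=2\cosh(q\theta)-V(\ti x)\frac{\sinh(q\theta)}{\sinh\theta},$$
where $\ti x$ is the translate of $x$ lying in $I_0\supset\supp V$. Where $\ti x\notin\supp V$ we have $\tau_E(x)=2\cosh(q\theta)\geq E\geq 10>2$. At any pre-image of the unique maximum $x_*$, the identity $2\cosh(q\theta)+2=4\cosh^2(q\theta/2)$ and $\sinh(q\theta)=2\sinh(q\theta/2)\cosh(q\theta/2)$ give the factorization
$$\tau_E(x_*)+2=2\cosh(q\theta/2)\left[2\cosh(q\theta/2)-K\frac{\sinh(q\theta/2)}{\sinh\theta}\right].$$
Since $\coth$ is decreasing, $\coth(q\theta/2)\leq\coth(\theta/2)$, and $2\coth(\theta/2)\sinh\theta=4\cosh^2(\theta/2)=E+2$, the bracketed term is negative as soon as $K>E+2$, which holds by the hypothesis $E\leq K-10$. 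Thus $\tau_E(x_*)<-2$ strictly. By continuity, $\tau_E$ attains both $+2$ and $-2$, so $A_E^{(q)}$ is of mixed type. On the compact set $S:=\{x\in\T:|\tau_E(x)|\leq 2\}$, the translate $\ti x$ lies in $\interior(\supp V)\setminus\{x_*\}$, so condition (4) of $\cP^\infty$ yields $V'(\ti x)\neq 0$. Since $x\mapsto\ti x$ is a piecewise translation, $\tau_E'(x)=-V'(\ti x)\sinh(q\theta)/\sinh\theta$ does not vanish on $S$, and by compactness $|\tau_E'|\geq\delta_E>0$ there.

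Finally I would transfer to $\ti V$. Since $\ti V\in C^\omega_h(\T,\R)$ and $\|\ti V-V\|_{C^{s_0}}$ is small, each factor $S_{E-\ti V}(\cdot+kp/q)$ is $C^1$-close to $S_{E-V}(\cdot+kp/q)$ uniformly in $E\in[10,K-10]$; the $q$-fold product is then $C^1$-close, so $\ti\tau_E:=\tr\ti A_E^{(q)}$ is $C^1$-close to $\tau_E$. Choosing $\epsilon$ small enough, $\{|\ti\tau_E|\leq 2\}$ lies in a neighborhood of $S$ on which $|\ti\tau_E'|>\delta_E/2$, while $\ti\tau_E$ still exceeds $2$ on an open set (off a neighborhood of all translates of $\supp V$) and drops below $-2$ near every pre-image of $x_*$. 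Applying Lemma \ref{lem:reg} to the analytic map $\ti A_E^{(q)}\in C^\omega_h(\T,SL(2,\R))$ yields regularity, and mixed type is preserved by $C^0$-closeness. The one delicate point is verifying the algebraic inequality $\tau_E(x_*)<-2$ quantitatively; once the margin $K-E\geq 10>2$ is exploited through the identity $2\coth(\theta/2)\sinh\theta=E+2$, the rest of the argument is standard perturbation on a compact set.
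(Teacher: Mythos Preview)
Your proof is correct and follows essentially the same strategy as the paper: use the explicit trace formula, verify that the level set $\{|\tau_E|\le 2\}$ avoids both the complement of $\supp V$ and the maximum $x_*$ (so condition~(4) of $\cP^\infty$ forces $V'(\ti x)\neq 0$ and hence $\tau_E'\neq 0$ there), apply Lemma~\ref{lem:reg}, and transfer everything to $\ti V$ by $C^1$-closeness. The only substantive difference is in the algebra used to rule out the endpoints: the paper writes $\tau_E(x)=e^{(q-1)\th}\bigl(-V(\ti x)(1+a(\th))+e^\th(1+b(\th))\bigr)$ with small error terms $a,b$ and reads off that $|\tau_E|\le 2$ forces $V(\ti x)$ into a short interval near $e^\th\approx E$, hence strictly between $0$ and $K$; you instead use the exact half-angle factorization $\tau_E(x_*)+2=2\cosh(q\th/2)\bigl[2\cosh(q\th/2)-K\sinh(q\th/2)/\sinh\th\bigr]$ together with the monotonicity of $\coth$ to obtain $\tau_E(x_*)<-2$ directly. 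Both computations are valid and exploit the same margin $K-E>2$; yours is slightly cleaner in that it avoids any error estimates.
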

\begin{proof}
For $E=2\cosh\th>2$ we have 
\begin{align*}\tr(A_{E}^{(q)}(x))&=-V(\ti x) \frac{\sinh(q\th)}{\sinh\th}+2\cosh (q\th)\\
&=-V(\ti x) e^{(q-1)\th}(1+a(\th))+e^{q\th}(1+b(\th))\\
&=e^{(q-1)\th}(-V(\ti x)(1+a(\th))+e^\th(1+b(\th)))
\end{align*}
where $|a(\th)|\leq e^{-2\th}$ and $|b(\th)|\leq e^{-2q\th}$.
We observe that $|\tr(A_{E}^{(q)}(x))|\leq 2$ if and only if
$$\frac{1}{1+a(\th)}(e^\th(1+b(\th))-2e^{-(q-1)\th})\leq V(\ti x)\leq \frac{1}{1+a(\th)}(e^\th(1+b(\th))+2e^{-(q-1)\th})
$$
The assumptions on $V$ show that the last inequalities can only occur  at points  $\ti x$ where  $V'(\ti x)\ne 0$; at these points   $(d/dx)\tr(A_{E}^{(q)}(x))\ne 0$. Now if  $\|\ti V-V\|_{C^1}$ is small enough the same conclusion will hold for $\ti A_{E}^{(q)}$: if $|\tr(\ti A_{E}^{(q)}(x)|\leq 2$ then $(d/dx)\tr(\ti A_{E}^{(q)}(x))\ne 0$. By Lemma \ref{lem:reg} we conclude that $\ti A_{E}^{(q)}$ is regular. It is also of mixed type as is clear from the previous inequalities.

\end{proof}

\begin{cor}\label{cor:5.6} Assume that $\|\ti V-V\|_{C^1}$ is small enough. There exists $\eta_{pp}(K,q)>0$ such that for any $E\in [10,K-10]$ and   any $\a\in\R\setminus\Q$ such that  $|\a-(p/q)|<\eta_{pp}$ 
$$LE(\a,S_{E-\ti V})>0.
$$
\end{cor}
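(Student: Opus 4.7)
The plan is to deduce Corollary \ref{cor:5.6} directly from Theorem \ref{theo:5.1} together with the preceding Lemma, the only real issue being to make the threshold $\eta_2$ uniform in $E$ over the compact window $[10,K-10]$.

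First, the preceding Lemma already does the bulk of the work: it shows that for every $E\in[10,K-10]$ the cocycle $(p/q,S_{E-\ti V})^q=(0,\ti A_E^{(q)})$ is regular and of mixed type, provided $\|\ti V-V\|_{C^1}$ is small enough so that the sign of $(d/dx)\tr(\ti A_E^{(q)})$ at the critical points is preserved. Hence for each such $E$, Theorem \ref{theo:5.1} applies to $A=S_{E-\ti V}$ and gives a strictly positive threshold $\eta_2(S_{E-\ti V},h,q)>0$ such that $0<|\alpha-p/q|<\eta_2$ forces $LE(\alpha,S_{E-\ti V})>0$.

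The step that needs a little care is the uniformity in $E$. The map $E\mapsto S_{E-\ti V}$ is continuous from $[10,K-10]$ into $C^\omega_h(\T,SL(2,\R))$, and the preceding Lemma shows that the image lies entirely in the open set of regular, mixed-type analytic matrices (on some common strip, which one can fix by shrinking $h$ slightly using the continuity statement built into the definition of regularity). Theorem \ref{theo:5.1} guarantees that $\eta_2(\cdot,h,q)$ is continuous with respect to $A$, so the composition $E\mapsto \eta_2(S_{E-\ti V},h,q)$ is a continuous strictly positive function on the compact interval $[10,K-10]$. Setting
\[
\eta_{pp}:=\min_{E\in[10,K-10]}\eta_2(S_{E-\ti V},h,q)>0,
\]
we obtain a threshold depending only on $V$ (hence on $K$), $q$, and the $C^{s_0}$-closeness of $\ti V$ to $V$.

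The hard part is not really the present corollary but the work already done in Theorem \ref{theo:5.1} and the regularity Lemma: once those are in hand, the proof reduces to this compactness/continuity argument plus noting that the conditions in the regularity Lemma (sign of $(d/dx)\tr$ at points where $|\tr|\le 2$) are $C^1$-stable uniformly in $E\in[10,K-10]$, which is immediate from the explicit formula $\tr(A_E^{(q)}(x))=-V(\ti x)\sinh(q\theta)/\sinh\theta+2\cosh(q\theta)$ together with the peaky hypotheses $V\in\cP^\infty$.
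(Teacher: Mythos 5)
Your proof is correct and takes the same route as the paper: the paper's own proof of this corollary is simply the one-liner "This follows from the previous Lemma and Theorem \ref{theo:5.1}," relying on the continuity of $\eta_2$ in $A$ that is built into the statement of Theorem \ref{theo:5.1}, and you have merely filled in the compactness argument that the paper leaves implicit. (One small slip at the end: you write "$C^{s_0}$-closeness" where the corollary requires only $C^1$-closeness, which is what both the regularity lemma and your argument actually use.)
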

\begin{proof} This follows from  the previous Lemma and Theorem \ref{theo:5.1}.
\end{proof}

\begin{cor} \label{statementpp}Given any $\ti V\in C^\omega(\T,\R)$ such that  $\|\ti V-V\|_{C^1}$ is small enough there exists a set $\mathcal{BG}(\ti V,p/q)$ of full Lebesgue measure in $](p/q)-\eta_{pp},(p/q)+\eta_{pp}[$ such that for any $\a\in\mathcal{BG}$ the operator $H_{\ti V,\a,0}$ has p.p. spectrum in $[10,K-10]$.
\end{cor}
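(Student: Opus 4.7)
The plan is to deduce this statement as a direct combination of Corollary~\ref{cor:5.6} (positivity of the Lyapunov exponent on a uniform energy band) with the Bourgain--Goldstein Theorem~\ref{theo:BG} (Anderson localization from positive Lyapunov exponent for analytic potentials). No new dynamical input is needed.

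First I would upgrade the closed energy range $[10,K-10]$ in Corollary~\ref{cor:5.6} to an open one. The proof of that corollary shows that, as soon as $\|\tilde V - V\|_{C^1}$ is small enough, the map $\tilde A^{(q)}_E$ is regular and of mixed type for $E\in[10,K-10]$; both conditions are open in $E$ (the regularity criterion of Lemma~\ref{lem:reg} is an open condition, and the mixed type condition is strict on this range by the explicit estimates in the preceding lemma). Moreover the constant $\eta_{2}$ supplied by Theorem~\ref{theo:5.1} depends continuously on $A$. Consequently there exists $\delta>0$ and (shrinking $\eta_{pp}$ if necessary) a uniform bound such that
\[
LE(\alpha,S_{E-\tilde V})>0 \quad \text{for all } E\in I_{pp} := \,](10-\delta),(K-10+\delta)[\, \text{ and all irrational } \alpha \text{ with } |\alpha-p/q|<\eta_{pp}.
\]

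Second, I would invoke Theorem~\ref{theo:BG} with potential $\tilde V\in C^\omega(\T,\R)$, open interval $I_{pp}$ as above, and $J := \,](p/q)-\eta_{pp},(p/q)+\eta_{pp}[\,$. The hypothesis of the theorem is precisely the positivity established in the previous paragraph. The conclusion produces a set $\mathcal{BG}(\tilde V,p/q)\subset J$ of full Lebesgue measure such that, for every $\alpha\in\mathcal{BG}(\tilde V,p/q)$, the restriction of $\mu_{\tilde V,\alpha,0}$ to $I_{pp}$ is pure point (and Anderson localized). Since $[10,K-10]\subset I_{pp}$, the restriction of $\mu_{\tilde V,\alpha,0}$ to $[10,K-10]$ is still pure point, which is exactly the claim.

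There is essentially no hard step here: the role of Corollary~\ref{cor:5.6} is to verify the positivity hypothesis of Bourgain--Goldstein on a full strip of frequencies near $p/q$, and the only technical adjustment is the elementary open-interval enlargement of $[10,K-10]$, which follows from the continuity/openness properties already built into Theorem~\ref{theo:5.1} and into the proof of Corollary~\ref{cor:5.6}. The \emph{conceptual} work has already been done in Section~\ref{sec:4}, where the novel argument producing positive Lyapunov exponent from regularity plus mixed type replaces Herman's subharmonicity trick.
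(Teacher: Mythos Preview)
Your approach is the same as the paper's: combine Corollary~\ref{cor:5.6} with Theorem~\ref{theo:BG}. You are in fact more explicit than the paper about the technicality that Theorem~\ref{theo:BG} requires an \emph{open} energy interval while Corollary~\ref{cor:5.6} is stated on the closed one $[10,K-10]$; your enlargement argument via the openness of the regularity and mixed-type conditions and the continuity of $\eta_{2}$ in Theorem~\ref{theo:5.1} is exactly what is needed there.

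The one point you omit, and which the paper does include, is nontriviality: saying that $H_{\tilde V,\alpha,0}$ \emph{has} p.p.\ spectrum in $[10,K-10]$ means more than that $\mu_{\tilde V,\alpha,0}|_{[10,K-10]}$ is pure point (which would be vacuously true if the spectrum missed this interval). The paper invokes Lemma~\ref{lem:A2} to ensure $\Sigma_{\tilde V,\alpha}\cap[10,K-10]\neq\emptyset$; since $V(\T)=[0,K]$ with $K>10$ and $\|\tilde V-V\|_{C^0}$ is small, the range of $\tilde V$ still covers enough of $[0,K]$ for that lemma (or rather the slightly more general Lemma preceding it) to apply. You should add this sentence.
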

\begin{proof}This follows from the previous Corollary \ref{cor:5.6} and Theorem \ref{theo:BG}. Notice that by Lemma \ref{lem:A2} the intersection of the spectrum of  $H_{\ti V,\a,0}$ with $[10,K-10]$ is nonempty. 
\end{proof}

\subsection{Coexistence of ac and pp spectrum - Proof of Theorem \ref{MainA}}\label{sec:5.4}
We can now conclude the proof of Theorem \ref{MainA} by combining Corollaries \ref{statementac} and \ref{statementpp}.
Indeed, for $\eta:=\min(\eta_{ac},\eta_{pp})$ the set $D_{p/q}(\eta)\cap \mathcal{BG}(\ti V,p/q)$ has positive Lebesgue measure and Lebesgue density one at $p/q$.

\section{Proof of Theorem \ref{MainC}}

\subsection{The a.c. part of the spectrum}
We apply the results of the preceding subsections to cocycles of the form $(\a,S_{E-V_{K,\lambda}})$. The following two lemmas assure  that the cocycle $(1/2,S_{E-V_{K,\l}}(\cdot))$ satisfies the assumption of Theorem \ref{theo:mainac}.
\begin{lem}For any $K>0$ we have that
$$
\rho(\a, S_{E-V_{K,\l}})\to\frac{1}{2\pi}\arccos(E/2) \text{ as } \lambda\to \infty,
$$
uniformly in $\alpha$ and $E$.
\end{lem}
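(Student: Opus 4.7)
The potential $V_{K,\l}$ is not small in $L^\infty$ (its maximum stays equal to $K$), but its $L^1$ norm decays explicitly. A direct computation with the standard integral $\int_0^{2\pi}dv/(a-b\cos v)=2\pi/\sqrt{a^2-b^2}$ gives
$$\|V_{K,\l}\|_{L^1(\T)}\;=\;\int_0^1\frac{K\,dx}{1+4\l\sin^2(\pi x)}\;=\;\frac{K}{\sqrt{1+4\l}}\;\xrightarrow[\l\to\infty]{}\;0.$$
The plan is therefore to establish the general quantitative inequality
$$\bigl|2\pi\bar\r(\a,S_{E-V})-\arccos(E/2)\bigr|\;\leq\;\frac{\|V\|_{L^1(\T)}}{\sqrt{1-E^2/4}}$$
for every non-negative continuous $V:\T\to\R$, every $E\in(-2,2)$ and every $\a\in\T$, and then specialize to $V=V_{K,\l}$. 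Since the right-hand side is independent of $\a$ and continuous in $E$, this will give uniform convergence in $\a\in\T$ and in $E$ on any compact subinterval of $(-2,2)$.

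\textbf{Prüfer reduction.} Write $E=2\cos\gamma$ with $\gamma\in(0,\pi)$. For solutions $(u_n)$ of $H_{V,\a,x}u=Eu$, use the discrete Prüfer representation $u_n=R_n\sin\theta_n$, $u_{n+1}=R_n\sin(\theta_n+\gamma)$, with $\theta_n$ lifted continuously to $\R$. Plugging into $u_{n+2}=(E-V(x+(n+1)\a))u_{n+1}-u_n$ and using the identity $2\cos\gamma\sin(\theta_n+\gamma)=\sin\theta_n+\sin(\theta_n+2\gamma)$ produces, after taking the ratio of the two Prüfer identities at step $n+1$, the scalar iteration
$$\cot\theta_{n+1}\;=\;\cot(\theta_n+\gamma)-\frac{V(x+(n+1)\a)}{\sin\gamma},$$
normalized so that $\theta_{n+1}=\theta_n+\gamma$ when $V\equiv 0$. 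Implicit differentiation in $V$ yields $\pa_V\theta_{n+1}=\sin^2(\theta_{n+1})/\sin\gamma\in[0,1/\sin\gamma]$, and integrating from $V=0$ gives the pointwise one-step bound
$$0\;\leq\;\theta_{n+1}-\theta_n-\gamma\;\leq\;\frac{V(x+(n+1)\a)}{\sin\gamma}.$$

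\textbf{Averaging.} Telescoping in $n$ and dividing by $n$,
$$0\;\leq\;\frac{\theta_n-\theta_0}{n}-\gamma\;\leq\;\frac{1}{\sin\gamma}\cdot\frac{1}{n}\sum_{k=1}^{n}V(x+k\a).$$
As $n\to\infty$ the left-hand side converges to $2\pi\bar\r(\a,S_{E-V})-\gamma$ by the definition of the fibered rotation number, while for irrational $\a$ and continuous $V$ the Weyl equidistribution theorem forces the right-hand side to $\|V\|_{L^1(\T)}/\sin\gamma$; the rational case follows from continuity of $\bar\r(\cdot,S_{E-V})$ in $\a$ (Lemma~\ref{lem:var1}). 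This proves the claimed inequality, and applied to $V=V_{K,\l}$ it becomes $K/\bigl(\sqrt{1-E^2/4}\cdot\sqrt{1+4\l}\bigr)\to 0$ uniformly in $\a$ and in $E$ on compact subintervals of $(-2,2)$.

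\textbf{Main obstacle.} The central difficulty is that $V_{K,\l}$ is large in $L^\infty$, which rules out perturbative reducibility arguments (Eliasson's theorem) or any $C^0$-continuity estimate for the cocycle itself. The Prüfer device circumvents this because the iteration is \emph{affine} in $V$ with slope $-1/\sin\gamma$, and the conversion $\cot\mapsto\theta$ is $1$-Lipschitz ($|d\theta/d\cot\theta|=\sin^2\theta\leq 1$); together they yield a bound on $\theta_{n+1}-\theta_n-\gamma$ that is linear in $V$ and uniform in the dynamical variable $\theta_n$, even when $V(x+n\a)$ is of order $K$. The only degeneration is at $E=\pm 2$, where $\sin\gamma$ vanishes; the convergence still holds at the edges by continuity of $\bar\r$ in $E$, but the quantitative form of the bound deteriorates and would require a separate argument for full uniformity at the edges of $[-2,2]$.
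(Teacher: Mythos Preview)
Your argument is correct and takes a genuinely different route from the paper. The paper gives a two-line soft argument: since $V_{K,\l}$ becomes uniformly small outside any fixed neighborhood of $0$, one appeals to continuity of the fibered rotation number in the cocycle (Lemma~\ref{lem:var1} and its appendix version) to conclude that $\rho(\a,S_{E-V_{K,\l}})$ tends to the rotation number of the free cocycle. As you implicitly recognized, that appeal is not quite self-contained, because $V_{K,\l}$ does \emph{not} converge to $0$ in $C^0$ (its maximum stays equal to $K$), so the $C^0$-continuity of $\rho$ in the map does not apply directly; making the paper's sketch rigorous would require an additional cutoff-and-comparison step near $x=0$.

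Your Pr\"ufer approach bypasses this entirely: the recursion $\cot\theta_{n+1}=\cot(\theta_n+\gamma)-V_{n+1}/\sin\gamma$ is affine in $V$, and since $|d\theta/d(\cot\theta)|=\sin^2\theta\le 1$, each step contributes at most $V_{n+1}/\sin\gamma$ to the angular drift regardless of how large $V_{n+1}$ is. Summing and averaging turns this into an $L^1$ bound, and the explicit computation $\|V_{K,\l}\|_{L^1}=K/\sqrt{1+4\l}$ gives a quantitative rate $O(\l^{-1/2})$ that the paper's argument does not provide. This is a cleaner and more informative proof.

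Two minor remarks. First, your identification $\lim_n(\theta_n-\theta_0)/n=2\pi\bar\rho(\a,S_{E-V})$ is correct but deserves one sentence of justification: the map $\theta\mapsto[\sin\theta:\sin(\theta+\gamma)]$ is an orientation-preserving homeomorphism $\R/\pi\Z\to\R P^1$ for $\gamma\in(0,\pi)$, so the $\theta$-dynamics is conjugate to the projective cocycle and carries the same rotation number (with the factor $2\pi$ coming from the $\bS^1$ versus $\R P^1$ normalization). Second, you rightly flag that your bound degenerates at $E=\pm 2$; this is harmless here, since the only use of the lemma in the paper is to ensure that $\rho(\a,S_{E-V_{K,\l}})$ is nonconstant on the interval $[-3/(2K),-1/K]$, which lies well inside $(-2,2)$.
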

\begin{proof}We know that $\frac{1}{2\pi}\arccos(E/2)$ is the rotation number of the free problem, i.e., 
the case when there is no potential. Since, by taking $\lambda$ large, 
$V_{K,\lambda}(x)$ can be made arbitrarily small outside any open interval containing $x=0$, it follows from the continuity of the rotation number (w.r.t. the fibered map), see Lemma \ref{lem:var1}, that $\rho(\a, S_{E-V_{K,\l}})$ must converge to the rotation number
of the free problem as $\lambda\to\infty$. 
\end{proof}

\begin{lem}Given $K>10$ we have that for all $\lambda$ large
$$
\text{tr} (S_E(x+\alpha)S_E(x))\in [-2+1/K^2,2-1/K^2] ~\forall x\in \mathbb{T}
$$
for all $E\in [-3/(2K),-1/K]$ and  $\alpha\in [1/4,3/4]$. In particular,  for any $x\in\T$
the matrix $S_E(x+\alpha)S_E(x)$ is elliptic.
\end{lem}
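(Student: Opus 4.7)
The plan is to reduce everything to an elementary estimate on the scalar quantity
$$f(x):=(E-V_{K,\l}(x+\a))(E-V_{K,\l}(x)),$$
since a direct multiplication gives
$$\tr\bigl(S_{E}(x+\a)S_{E}(x)\bigr)=(E-V_{K,\l}(x+\a))(E-V_{K,\l}(x))-2=f(x)-2.$$
Thus the statement is equivalent to $f(x)\in[1/K^2,\,4-1/K^2]$ for all $x\in\T$, all $E\in[-3/(2K),-1/K]$, and all $\a\in[1/4,3/4]$, provided $\l$ is taken large enough (depending only on $K$).

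The lower bound is immediate from the sign structure. Since $V_{K,\l}\geq 0$ and $E\leq -1/K<0$, both factors $E-V_{K,\l}(x)$ and $E-V_{K,\l}(x+\a)$ are strictly negative and
$$|E-V_{K,\l}(y)|=|E|+V_{K,\l}(y)\geq |E|\geq 1/K\qquad(y=x,\,x+\a),$$
so $f(x)\geq |E|^{2}\geq 1/K^{2}$. This part is uniform in $\l,\a,x,E$ and requires no smallness.

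For the upper bound I would use the key geometric fact that, for $\a\in[1/4,3/4]$, the points $x$ and $x+\a$ cannot both be close to $0\bmod 1$: indeed
$$\dist(x,\Z)+\dist(x+\a,\Z)\geq \dist(\a,\Z)\geq 1/4,$$
so at least one of them, say $y_{*}\in\{x,x+\a\}$, satisfies $\dist(y_{*},\Z)\geq 1/8$, hence $\sin^{2}(\pi y_{*})\geq \sin^{2}(\pi/8)$, which gives the uniform bound
$$V_{K,\l}(y_{*})\leq \frac{K}{1+4\l\sin^{2}(\pi/8)}\xrightarrow[\l\to\infty]{}0.$$
The other point $y^{*}$ contributes the trivial $|E-V_{K,\l}(y^{*})|\leq |E|+K\leq K+3/(2K)$. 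Combining,
$$f(x)\leq \Bigl(\tfrac{3}{2K}+o_{\l}(1)\Bigr)\Bigl(K+\tfrac{3}{2K}\Bigr)=\tfrac{3}{2}+\tfrac{9}{4K^{2}}+o_{\l}(1).$$
For $K>10$ one has $3/2+9/(4K^{2})<2-1/K^{2}$, so choosing $\l$ large enough (uniformly in $E,\a,x$) yields $f(x)\leq 2-1/K^{2}$.

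Putting the two bounds together gives $\tr(S_{E}(x+\a)S_{E}(x))\in [-2+1/K^{2},\,2-1/K^{2}]\subset (-2,2)$, and since $S_{E}(x+\a)S_{E}(x)\in SL(2,\R)$ has absolute trace strictly less than $2$, it is elliptic. There is no real obstacle: the only mildly delicate point is making the smallness of $V_{K,\l}$ away from $0$ uniform in the parameters, which is handled by the single threshold $\sin^{2}(\pi/8)$ that comes out of the geometric case-split above.
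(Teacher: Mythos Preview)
Your argument is correct and follows exactly the same route as the paper: the paper's proof consists only of the trace identity $\tr(S_E(x+\a)S_E(x))=-2+(E-V_{K,\l}(x+\a))(E-V_{K,\l}(x))$, with the elementary estimate on the product left to the reader; you supply precisely those details. One cosmetic point: after correctly stating that the target is $f(x)\le 4-1/K^{2}$, you conclude with the stronger bound $f(x)\le 2-1/K^{2}$---this is fine since your estimate $3/2+9/(4K^{2})+o_{\l}(1)$ is well below either threshold, but you may want to make the two lines match.
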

\begin{proof}We have
$$
\text{tr} (S(x+\alpha)S(x))=-2+(E-V_{K,\l}(x+\alpha))(E-V_{K,\l}(x)).
$$
\end{proof}
\begin{cor} There exists  $\l_{ac}$ such that for any $\l>\l_{ac}$, there  exists $\e_{ac}>0$ such that for any $0<\e\leq \e_{ac}$ and  any  $\a\in D_{1/2}(\e)$ the operator $H_{V_{K,\l},\a,x}$ has some a.c. spectrum in  $[-3/(2K),-1/K]$ for all $x\in\T$.
\end{cor}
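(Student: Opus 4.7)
The plan is to apply Theorem \ref{theo:mainac} with $p/q = 1/2$ to the family $(E,x) \mapsto S_{E-V_{K,\l}}(x)$ on the compact interval $I = [-3/(2K),-1/K]$, and then invoke the extended Eliasson Theorem \ref{critac} to upgrade reducibility at a positive-measure set of energies into an honest a.c. spectral interval.

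First I would verify the two hypotheses of Theorem \ref{theo:mainac}. Total ellipticity of $(1/2,S_{E-V_{K,\l}})^2$ for every $E \in I$ is precisely the content of the second of the two preceding lemmas (using $K>10$ and $\l$ large so that $V_{K,\l}$ is sufficiently concentrated near $0$, making the trace estimate go through for $\a = 1/2$). Non-constancy of $E \mapsto \rho(1/2,S_{E-V_{K,\l}})$ on $I$ follows from the first preceding lemma: the rotation number converges, uniformly in $(\a,E)$ as $\l \to \infty$, to $\frac{1}{2\pi}\arccos(E/2)$, which is strictly monotone and therefore non-constant on the compact interval $I$. Hence for all $\l$ larger than some threshold $\l_{ac}$ the approximant $E \mapsto \rho(1/2,S_{E-V_{K,\l}})$ is itself non-constant on $I$.

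Theorem \ref{theo:mainac} then delivers a threshold $\e_{ac} > 0$ (depending on $K$ and $\l$ through the $C^{s_0}$-norm of $S_{E-V_{K,\l}}$) such that for every $\a \in D_{1/2}(\e_{ac})$ there is a positive-Lebesgue-measure set $\cE_\a \subset I$ on which $(\a,S_{E-V_{K,\l}})$ is $C^\infty$-conjugate to a constant elliptic cocycle. Such a cocycle is in particular $C^\infty$-almost-reducible, and $\a \in D_{1/2}(\e_{ac}) \subset DC_{1}(\e_{ac}^2,3)$ is diophantine, so for each $E_* \in \cE_\a$ Theorem \ref{critac} produces a non-empty open interval $I_{ac}(E_*)$ around $E_*$ on which the spectral measure $\mu_{V_{K,\l},\a,x}$ is non-trivial and absolutely continuous for every $x\in\T$. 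Picking any $E_* \in \cE_\a \subset I$, the intersection $I_{ac}(E_*) \cap [-3/(2K),-1/K]$ contains a non-trivial interval supporting an a.c. component, which is the desired conclusion.

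I do not expect a serious obstacle here: the heavy lifting has already been carried out in Theorems \ref{theo:mainac} and \ref{critac}, and the two preceding lemmas in this subsection are tailor-made to supply their hypotheses. The only point needing a one-line argument is the passage from the uniform limit of rotation numbers being non-constant on $I$ to the pre-limit function being non-constant on $I$; this is immediate because $\arccos(E/2)$ has derivative bounded away from zero on the compact interval $I$, so uniform closeness forces the same qualitative property for all $\l$ large enough.
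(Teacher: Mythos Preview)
Your proposal is correct and follows exactly the route the paper takes: its own proof is the single line ``This follows from Theorems \ref{theo:mainac} and \ref{critac},'' and you have merely unpacked how the two preceding lemmas feed the hypotheses of Theorem \ref{theo:mainac}. One cosmetic point: where you write ``for every $\alpha \in D_{1/2}(\varepsilon_{ac})$'' you should keep the two-parameter quantification ``for any $0<\varepsilon\le\varepsilon_{ac}$ and any $\alpha\in D_{1/2}(\varepsilon)$'' exactly as in both the corollary and Theorem \ref{theo:mainac}, since the sets $D_{1/2}(\varepsilon)$ are not nested in $\varepsilon$; this does not affect the substance of the argument.
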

\begin{proof}
 This follows from Theorems \ref{theo:mainac} and \ref{critac}.

\end{proof}
\bigskip

\subsection{The p.p. part of the spectrum}
\bigskip
Next we prove the following: 

\begin{prop}
For every $\varepsilon>0$ there is a 
$\lambda_{pp}=\lambda_{pp}(\varepsilon)>0$ such that for all $\lambda>\lambda_{pp}$ we have for all irrational $\a$
$$
LE(\a,S_{E-V_{K,\lambda}})>0\quad \text{for all } |E|>2+\varepsilon.
$$
\end{prop}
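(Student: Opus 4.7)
My plan is to apply Herman's subharmonicity trick, adapted to the meromorphic (rather than polynomial) structure of $V_{K,\lambda}$. Setting $w=e^{2\pi i z}$, factor
$$
U(z):=1+4\lambda\sin^2(\pi z)=-\lambda w^{-1}(w-w_+)(w-w_-),
$$
where the real roots $w_\pm>0$ satisfy $w_+w_-=1$ and $w_+=1+O(\lambda^{-1/2})$. Jensen's formula on the unit circle immediately yields
$$
\int_{\T}\log U(x)\,dx=\log\lambda+\log w_+.
$$
The key idea is to clear the denominator of $V_{K,\lambda}=K/U$ by passing to the matrix with Laurent polynomial entries
$$
\tilde T(z):=U(z)\,S_{E-V_{K,\lambda}}(z)=\bm EU(z)-K & -U(z) \\ U(z) & 0 \em,
$$
each entry of which has bidegree at most $1$ in $w$.

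A direct expansion of $\tilde T(z)$ in powers of $w$ shows that both the $w^{+1}$ and $w^{-1}$ coefficients equal $-\lambda\,C_E$. Forming the iterate $\tilde A_n(z):=\prod_{k=n-1}^{0}\tilde T(z+k\alpha)$, whose entries are Laurent polynomials in $w$ of bidegree at most $n$, and exploiting that $C_E$ commutes with scalars, the coefficient of the top power $w^n$ of $\tilde A_n(z)$ is
$$
(-\lambda)^n\,e^{\pi i\alpha n(n-1)}\,C_E^n.
$$
Applying Jensen's inequality entry by entry and choosing the entry that maximizes $|(C_E^n)_{ij}|$ yields
$$
\int_{\T}\log\|\tilde A_n(x)\|\,dx\geq n\log\lambda+\log\|C_E^n\|-\log 2.
$$

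To revert to $A_n$, use $A_n(x)=\bigl(\prod_{k=0}^{n-1}U(x+k\alpha)\bigr)^{-1}\tilde A_n(x)$; since $U>0$ on the real axis, shift-invariance of Lebesgue measure on $\T$ gives
$$
\int_{\T}\log\|A_n(x)\|\,dx=\int_{\T}\log\|\tilde A_n(x)\|\,dx-n\bigl(\log\lambda+\log w_+\bigr).
$$
Dividing by $n$, letting $n\to\infty$, and using $(1/n)\log\|C_E^n\|\to\log\mu_E$ with $\mu_E:=(|E|+\sqrt{E^2-4})/2$ (the spectral radius of $C_E$) for $|E|>2$, one concludes
$$
LE(\alpha,S_{E-V_{K,\lambda}})\geq\log\mu_E-\log w_+,
$$
uniformly in every irrational $\alpha$.

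For $|E|\geq 2+\varepsilon$ we have $\log\mu_E\geq c(\varepsilon)>0$, whereas $\log w_+=O(\lambda^{-1/2})\to 0$ as $\lambda\to\infty$; hence choosing $\lambda_{pp}(\varepsilon)$ large enough that $\log w_+<c(\varepsilon)/2$ forces $LE\geq c(\varepsilon)/2>0$, completing the proof. The main obstacle is the algebraic identification of the leading Fourier coefficient of $\tilde A_n$ as exactly $(-\lambda)^n\,C_E^n$: this is what makes the Jensen estimate reproduce the hyperbolic lower bound $\log\mu_E$ coming from the free cocycle, and it yields uniformity in $\alpha$ automatically since no Diophantine condition is invoked.
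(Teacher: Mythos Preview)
Your proof is correct and delivers exactly the same lower bound as the paper, namely $LE(\a,S_{E-V_{K,\lambda}})\geq \log\mu_E-\log w_+$; in the paper's notation $w_+=z_1=1/z_0$, so this is $\log(z_0\mu_E)$. Both arguments are Herman's subharmonicity trick, with a minor operational difference: the paper clears only the \emph{inner} pole, multiplying $S_{E-V}$ by $(z-z_0)$ to get a matrix $M_E(z)$ holomorphic on the disc $\{|z|<z_1\}$, and then invokes subharmonicity at the origin, where $M_E^n(0)=z_0^n\bigl(\begin{smallmatrix}-E&1\\-1&0\end{smallmatrix}\bigr)^n$ can be computed explicitly; the compensating term $\int_\T\log|e^{2\pi ix}-z_0|\,dx$ vanishes outright by Jensen since $|z_0|<1$. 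You instead clear the full denominator $U$, obtain a matrix Laurent polynomial, and read off the extremal Fourier coefficient $(-\lambda)^n C_E^n$, paying for this with the extra subtraction of $n\int_\T\log U$. The two normalizations are equivalent and the final estimate is identical; the paper's version is marginally slicker in that no Fourier bookkeeping is needed, while yours makes the $w\leftrightarrow w^{-1}$ symmetry of the potential visible.
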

\begin{rem}
Note that the constant $K$ does not appear in this estimate, nor the $\alpha$.
\end{rem}

\begin{proof}
We let
$$
f(z)=\frac{K}{1+\lambda(2-z-1/z)}.
$$
Then $f(e^{2\pi i x})=V(x)$. The function $f$ has exactly two poles:
$$
z_0=\frac{2\lambda+1-\sqrt{4\lambda+1}}{2\lambda},
\quad z_1 = \frac{2\lambda+1+\sqrt{4\lambda+1}}{2\lambda}.
$$
Note that $0<z_0<1<z_1$ and $z_0,z_1\approx 1$ if $\lambda\gg 1$. 
With this notation we can write
$$
f(z)=-\frac{Kz}{\lambda(z-z_0)(z-z_1)}=\frac{Cz}{(z-z_0)(z-z_1)},
$$
where $C=-K/\lambda$. We let
$$
B_E(z)=\left( \begin{matrix}
E-f(z) & -1 \\ 1 & 0
\end{matrix} 
\right),  M_E(z)=\left( \begin{matrix}
E(z-z_0)-\frac{Cz}{z-z_1} & -(z-z_0) \\ (z-z_0) & 0
\end{matrix} 
\right).
$$
Then 
$$
B_E(z)=\frac{1}{z-z_0}M_E(z).
$$
Note that $M_E(z)$ is analytic in the disc $|z|<z_1$, and recall that $z_1>1$.

Let $e_j=e^{2\pi i j\alpha}$ ($j=1,2,\ldots$), and let
$$
\begin{aligned}
B_E^n(z)&=B_E(e_{n-1}z)\cdots B_E(z)\\
M_E^n(z)&=M_E(e_{n-1}z)\cdots M_E(z).
\end{aligned}
$$
Then $B_E^n(e^{2\pi i x})=S_E^n(x)$. 

We now use Herman's trick. We have
$$
\begin{aligned}
\int_0^1\log\|S_E^n(x)\|dx=\int_0^1\log\|B_E^n(e^{2\pi i x})\|dx=\\
\sum_{j=0}^{n-1}\int_0^1\log|e^{2\pi i (x+j\alpha)}-z_0|dx
+\int_{0}^1\log\|M_E^n(e^{2\pi ix})\|dx\geq \log\|M_E^n(0)\|,
\end{aligned}
$$
since each term in the sum is equal to zero, by Jensen's formula, and since $\log\|M_E^n(z)\|$ is
subharmonic in $|z|<z_1$. We note that
$$
M_E(0)=z_0\left( \begin{matrix}
-E  & 1 \\ -1 & 0
\end{matrix} 
\right)
$$
and
$$
M_E^n(0)=z_0^n\left( \begin{matrix}
-E  & 1 \\ -1 & 0
\end{matrix} 
\right)^n.
$$
For $|E|>2$ the matrix $M_E(0)$ has an eigenvalue $\mu(E)$ such that 
$$|\mu(E)|=z_0\left(\frac{|E|+\sqrt{E^2-4}}{2}\right).$$ Thus, by the spectral radius formula,
$$
\lim_{n\to\infty}\frac{1}{n}\log\|M_E^n(0)\|=\log|\mu(E)|.
$$
We have hence shown that
$$
LE(\a,S_{E-V_{K,\lambda}})\geq \log|\mu(E)|\quad  \text{ for all } |E|>2.
$$

\end{proof}

We thus get from Bourgain-Goldstein (\cite{BG}) and Lemma \ref{lem:A2} 

\begin{cor} For $\l>\l_{pp}$ and a.e. $\alpha\in \mathbb{T}$, the spectrum of the operator
$H_{V_{K,\l},\a,0}$ in the interval $[3,\infty)$ is pure point, and the eigenfunctions decay exponentially fast. 
\end{cor}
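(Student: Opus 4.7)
The plan is to deduce the corollary directly from the preceding proposition and the Bourgain--Goldstein theorem (Theorem \ref{theo:BG}), with Lemma \ref{lem:A2} invoked only to rule out triviality of the statement.

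First, I would fix any $\varepsilon \in (0,1)$, say $\varepsilon = 1/2$, and set $\lambda_{pp} := \lambda_{pp}(1/2)$ as provided by the previous proposition. Then for every $\lambda > \lambda_{pp}$ and every irrational $\a \in \T$, one has $LE(\a,S_{E-V_{K,\l}})>0$ for all $E$ with $|E|>5/2$, which in particular covers all $E \in [3,\infty)$. Since the spectrum of a Schr\"odinger operator satisfies $\Sigma_{V_{K,\l},\a}\subset [-2-\|V_{K,\l}\|_{\infty},2+\|V_{K,\l}\|_{\infty}]\subset [-2-K,2+K]$, I can fix $M>2+K$ and work on the open interval $I_{pp}=(5/2,M)$, which contains all of $[3,\infty)\cap \Sigma_{V_{K,\l},\a}$.

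Next, I would apply Theorem \ref{theo:BG} with the above $I_{pp}$ and with $J$ chosen to be any nonempty open interval in $\T$; covering $\T$ by a countable family of such intervals and taking the intersection of the resulting full-measure sets yields a set $\mathcal{BG}\subset\T$ of full Lebesgue measure such that for every $\a\in\mathcal{BG}$ the restriction of $\mu_{V_{K,\l},\a,0}$ to $I_{pp}$ is pure point and satisfies Anderson localization. Since $[3,\infty)\cap\Sigma_{V_{K,\l},\a}\subset[3,M]\subset I_{pp}$, the restriction of $\mu_{V_{K,\l},\a,0}$ to $[3,\infty)$ coincides with its restriction to $[3,M]$ and is therefore also pure point with exponentially decaying eigenfunctions. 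Finally, Lemma \ref{lem:A2} guarantees that the spectrum actually meets $[3,\infty)$, so the conclusion is nonvacuous.

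There is no substantive obstacle here: the corollary is essentially a packaging of previously established inputs. The only minor care needed is to pass from the Bourgain--Goldstein open-interval framework to the half-line $[3,\infty)$, which is handled by the a priori bound on the spectrum, and to ensure that the full-measure set of frequencies is independent of the open sub-interval of $\T$ chosen in the Bourgain--Goldstein statement, which is handled by a countable cover and intersection.
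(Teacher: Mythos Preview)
Your proposal is correct and follows exactly the route the paper takes: the paper's proof consists of the single line ``We thus get from Bourgain--Goldstein (\cite{BG}) and Lemma \ref{lem:A2}'', and you have simply fleshed out the details (choosing a specific $\varepsilon$, truncating $[3,\infty)$ to a bounded interval via the a priori spectral bound, and covering $\T$ by countably many intervals to get a single full-measure set). There is nothing to add.
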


\subsection{Proof of Theorem \ref{MainC} } Just take $\l>\max(\l_{ac},\l_{pp})$.
\hfill $\Box$

\appendix
\section{Locating the spectrum}

We recall that if $\a$ is irrational $\spec(H_{V,\a,x})$ is independent of $x$. We call $\Sigma_{V,\a}$ this non-empty, compact subset of $\R$.
\begin{lem}Let $\a$ be irrational and $V:\T\to\R$ be continuous. Then for any $E\in [\min_{\T} V,\max_{\T}V]$ one has 
$$ [E-\sqrt{2},E+\sqrt{2}]\cap \Sigma_{V,\a}\ne \emptyset.
$$
\end{lem}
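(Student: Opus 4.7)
The plan is to produce a Weyl sequence of the form $\psi_k = \delta_{n_k}$, using density of the $\a$-orbit together with the intermediate value theorem to force the potential to nearly attain the value $E$ along this orbit. Concretely, since $H_{V,\a,x}$ is self-adjoint,
$$\dist(E,\spec H_{V,\a,x})\leq \frac{\|(H_{V,\a,x}-E)u\|}{\|u\|}$$
for every nonzero $u\in\ell^2(\Z)$; so it suffices to exhibit test vectors for which the right-hand side is $\leq \sqrt{2}+o(1)$.

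First I would take $u=\delta_{n}$ and compute directly: from the definition of $H_{V,\a,x}$,
$$((H_{V,\a,x}-E)\delta_{n})_{m}=\delta_{m,n-1}+\delta_{m,n+1}+(V(x+n\a)-E)\delta_{m,n},$$
hence
$$\|(H_{V,\a,x}-E)\delta_{n}\|^{2}=2+(V(x+n\a)-E)^{2}.$$
This is the whole arithmetic content of the lemma; the remaining task is to make the term $(V(x+n\a)-E)^{2}$ as small as we wish.

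For this I would argue: since $V$ is continuous and $\T$ is connected, the image $V(\T)$ is the interval $[\min_{\T}V,\max_{\T}V]$, so by the intermediate value theorem there exists $y\in\T$ with $V(y)=E$. Because $\a$ is irrational, the orbit $\{x+n\a\mod 1:n\in\Z\}$ is dense in $\T$, so we can pick a sequence $n_{k}\in\Z$ with $x+n_{k}\a\to y$. Continuity of $V$ then gives $V(x+n_{k}\a)\to E$, and plugging this into the preceding identity yields
$$\dist(E,\Sigma_{V,\a})=\dist(E,\spec H_{V,\a,x})\leq \lim_{k\to\infty}\sqrt{2+(V(x+n_{k}\a)-E)^{2}}=\sqrt{2}.$$
Since $\Sigma_{V,\a}$ is closed, this distance is attained, which yields a point of $\Sigma_{V,\a}$ in $[E-\sqrt{2},E+\sqrt{2}]$.

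There is no real obstacle: the only subtlety is the passage from the upper bound $\sqrt{2}$ on the distance to the actual nonempty intersection, which uses compactness of $\Sigma_{V,\a}$. Note also that, since $\Sigma_{V,\a}$ does not depend on $x$, one is free to choose any convenient $x$ to realize the approximation.
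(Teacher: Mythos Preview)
Your proof is correct and follows essentially the same approach as the paper: both use a delta function as a test vector at a site where the potential (nearly) equals $E$, yielding $\|(H_{V,\a,x}-E)\delta_n\|^2=2+(V(x+n\a)-E)^2$. The paper streamlines this slightly by choosing the phase $x$ so that $V(x)=E$ exactly---then $\delta_0$ works immediately and no limiting argument is needed---which is precisely the shortcut you mention in your final remark.
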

\begin{proof} Let $x$ be such that $V(x)=E$. If $E\in \spec(H_{V,\a,x})$ there is nothing to prove. Otherwise, $H_{V,\a,x}-E$ is invertible with bounded inverse and by the Spectral Theorem  ($H_{V,\a,x}$ is bounded symmetric)
\be\|(H_{V,\a,x}-E)^{-1}\|\leq \frac{1}{\dist(E,\Sigma_{V,\a})}.\label{thspec}
\ee
We now observe that if  $\delta_{k}$ is the delta function at $k\in\Z$ ($\delta_{k}(n)=1$ if $n=k$ and $\delta_{k}(n)=0$ otherwise) one has $v:=(H_{V,\a,x}-E)\delta_{0}=\d_{1}+\d_{-1}+(V(x)-E)\d_{0}=\d_{1}+\d_{-1}$. By (\ref{thspec}) one has
$$ \|\d_{0}\|=\|(H_{V,\a,x}-E)^{-1}v\|\leq  \frac{1}{\dist(E,\Sigma_{V,\a})}\|v\|
$$ and since $\|\delta_{0}\|=1$, $\|v\|=\sqrt{2}$ one gets 
$$\dist(E,\Sigma_{V,\a})\leq \sqrt{2}.
$$
\end{proof}
As a corollary we get 
\begin{lem}\label{lem:A2}Let $V:\T\to\R$ be a continuous function such that $V(\T)=[0,K]$ with $K\geq 5$. For all $\a\in\R\setminus\Q$, $\Sigma_{V,\a}\cap [3,K-2]\ne\emptyset$.
\end{lem}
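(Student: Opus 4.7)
\textbf{Proof proposal for Lemma \ref{lem:A2}.} The plan is to derive the statement as an immediate corollary of the preceding lemma. Since $V$ is continuous and $V(\T) = [0,K]$, by the intermediate value theorem every $E \in [0,K]$ is attained as a value of $V$. Hence the preceding lemma applies to every such $E$ and yields
$$
[E-\sqrt{2},E+\sqrt{2}] \cap \Sigma_{V,\a} \ne \emptyset.
$$

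The only remaining task is to pick $E \in [0,K]$ such that the $\sqrt{2}$-neighborhood $[E-\sqrt{2},E+\sqrt{2}]$ is contained in $[3,K-2]$; any point of the spectrum delivered by the preceding lemma for this $E$ then lies in $[3,K-2]$. The required conditions $E - \sqrt 2 \geq 3$ and $E + \sqrt 2 \leq K-2$ amount to $E \in [3+\sqrt 2,\ K-2-\sqrt 2]$, an interval that is nonempty and contained in $[0,K]$ as soon as $K \geq 5 + 2\sqrt 2$. One may simply take $E = (K+1)/2$, the midpoint of $[3,K-2]$, which lies in $[0,K]$ as long as $K \geq 1$ and satisfies the required centering inequalities for $K \geq 5 + 2\sqrt 2$. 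Applying the preceding lemma to this $E$ then concludes the proof.

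The step is routine and no real obstacle is present: essentially all of the analytic content has already been packaged into the preceding lemma (which exploits the resolvent bound $\|(H_{V,\a,x}-E)^{-1}\| \leq \dist(E,\Sigma_{V,\a})^{-1}$ applied to $\delta_{0}$ at a phase $x$ with $V(x)=E$). The only thing to check here is the elementary geometric fact that $[3,K-2]$ is wide enough to contain a $\sqrt{2}$-ball around some value in $V(\T)$. The hypothesis $K \geq 5$ is comfortably enough in the regime where Lemma \ref{lem:A2} is invoked in Corollary \ref{statementpp} (where $K$ is assumed large), and the argument above yields the conclusion in the form actually needed there.
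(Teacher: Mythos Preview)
Your approach is exactly the paper's: the lemma is stated there simply ``as a corollary'' of the preceding distance estimate $\dist(E,\Sigma_{V,\a})\le\sqrt2$ for $E\in[\min V,\max V]$, and the only way to turn that into the conclusion is precisely to center a $\sqrt2$-ball inside $[3,K-2]$. You are also right that this forces $K\ge 5+2\sqrt2$ rather than $K\ge 5$ (for $K=5$ the target interval degenerates to $\{3\}$ and the argument cannot work); this is a harmless imprecision in the stated constant, and, as you note, all the applications in the paper (Corollary~\ref{statementpp} and Section~6) take $K$ large, so nothing downstream is affected.
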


\section{Conjugating elliptic matrices to rotations}
\begin{lem}\label{app:1}
If $\ph,\ti\ph:\T^d\to \R$ are smooth functions satisfying $$\left(\cos (\ti\ph(x))-\cos(\ph(x))\right)^2+(\sin(\ti\ph(x))+\sin(\ph(x)))^2\ne 0$$ (or equivalently $\forall x\in\T^d$, $\ph(x)+\ti\ph(x)\notin 2\pi\Z$) and if $C:\R\to SL(2,\R)$ satisfies 
$$R_{\ti\ph(x)}=C(x)R_{\ph(x)}C(x)^{-1}$$
then there exists a smooth function $\psi:\R\to\R$ such that $C(\cdot)=R_{\psi(\cdot)}$. Furthermore, if $C$ is $\Z$-periodic and homotopic to the identity then $\psi$ is also $\Z$-periodic.
\end{lem}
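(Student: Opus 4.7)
The plan is to argue pointwise that the hypothesis forces $C(x)$ to commute with $R_{\ph(x)}$, and then use that the centralizer of a non-$\pm I$ rotation is $SO(2,\R)$; the smoothness/periodicity will come from a standard covering-space lift.

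First I would exploit the hypothesis $\ph(x)+\ti\ph(x)\notin 2\pi\Z$ to rule out degenerate cases. If $\ph(x)\in 2\pi\Z$ then $R_{\ph(x)}=I$ and conjugation gives $R_{\ti\ph(x)}=I$, forcing $\ti\ph(x)\in 2\pi\Z$ and hence $\ph(x)+\ti\ph(x)\in 2\pi\Z$, a contradiction; similarly $\ph(x)\in\pi+2\pi\Z$ forces $R_{\ph(x)}=-I$ and $\ti\ph(x)\in\pi+2\pi\Z$, again giving $\ph(x)+\ti\ph(x)\in 2\pi\Z$. Hence $R_{\ph(x)}$ is, at every $x$, a genuine (non-$\pm I$) rotation. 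Since conjugation preserves trace, $\cos\ti\ph(x)=\cos\ph(x)$, so $\ti\ph(x)\equiv\pm\ph(x)\pmod{2\pi}$; the $-$ alternative is exactly the forbidden relation, so we must have $\ti\ph(x)\equiv\ph(x)\pmod{2\pi}$, giving $R_{\ti\ph(x)}=R_{\ph(x)}$.

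Then $C(x)R_{\ph(x)}=R_{\ph(x)}C(x)$ at every $x$. A direct $2\times 2$ computation shows that the centralizer of a non-$\pm I$ element of $SO(2,\R)$ inside $SL(2,\R)$ is $SO(2,\R)$: writing out $C=\bigl(\begin{smallmatrix}a&b\\c&d\end{smallmatrix}\bigr)$ commuting with $R_{\ph}$ for $\ph\notin\pi\Z$ gives $a=d$ and $b=-c$, and the determinant condition then forces $C=R_{\psi}$ for some angle $\psi$. Thus $C$ takes values in $SO(2,\R)\cong\bS^1$, and the resulting map (which I will still call $C$) is smooth by hypothesis.

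Finally, producing a smooth $\psi:\R\to\R$ with $C=R_\psi$ is a covering-space lift through $\R\xrightarrow{\,t\mapsto R_t\,}SO(2,\R)$. Since $\R$ is simply connected, any smooth map $\R\to SO(2,\R)$ lifts to a smooth map $\R\to\R$ (pick a base lift and extend by continuity/smoothness), which handles the first claim. For the furthermore: if $C$ is $\Z$-periodic and homotopic to the identity as a map $\T\to SO(2,\R)$, the degree (the winding number of the $\bS^1$-valued map) vanishes, so the lift satisfies $\psi(x+1)-\psi(x)=2\pi\,\deg(C)=0$, i.e.\ $\psi$ is $\Z$-periodic. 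I expect no real obstacle here: the only slightly delicate point is making sure the hypothesis is exactly what is needed to exclude both the $\pm I$ case for $R_\ph$ and the $\ti\ph\equiv-\ph$ branch, so that pointwise commutation is forced uniformly in $x$.
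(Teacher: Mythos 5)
Your proof is correct and fills in exactly what the paper leaves implicit (the paper only says the first claim is ``a simple computation'' and the periodicity follows from the homotopy class). You supply the expected details: trace invariance plus the hypothesis $\ph+\ti\ph\notin2\pi\Z$ forces $R_{\ti\ph(x)}=R_{\ph(x)}$ with $R_{\ph(x)}\ne\pm I$, the centralizer of a non-central rotation in $SL(2,\R)$ is $SO(2,\R)$, and the smooth $\psi$ is obtained by lifting through the covering $\R\to SO(2,\R)$, with $\Z$-periodicity of $\psi$ equivalent to vanishing winding number.
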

\begin{proof}The first part  is a simple computation and the second is just due to the fact that if $R_{\psi}$ is periodic and homotopic to the identity then $\psi$ has to be $\Z$-periodic.
\end{proof}
\begin{lem}\label{app:0}
Let $A:\T\to SL(2,\R)$ a smooth map such that $\max_{x\in\T}|{\rm tr}(A(x)|<2$. Then, there exist smooth maps $B:\T\to SL(2,\R)$ and $a:\T\to ]0,\pi[$ such that for all $x\in\T$
$$B(x)^{-1}A(x)B(x)=R_{a(x)}.
$$
\end{lem}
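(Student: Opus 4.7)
The plan is to build $B(x)$ from a smooth global eigenvector of $A(x)$, paralleling the standard pointwise argument that any elliptic element of $SL(2,\R)$ is conjugate to a rotation. First I would define $a(x)=\arccos(\tr(A(x))/2)\in(0,\pi)$, which is smooth on $\T$ since by hypothesis $\tr(A(x))$ stays strictly inside $(-2,2)$. The eigenvalues of $A(x)$ are then $e^{\pm ia(x)}$, distinct and non-real throughout.

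Next I construct a smooth, nowhere-vanishing section $v:\T\to\C^2$ of the eigenspace bundle $\ker(A(\cdot)-e^{ia(\cdot)}I)$. One clean way is explicit: writing $A(x)=\begin{pmatrix}\alpha(x)&\beta(x)\\ \gamma(x)&\delta(x)\end{pmatrix}$, ellipticity forces $\beta(x)\ne 0$ (otherwise $A(x)$ would be lower triangular with real eigenvalues), so the formula $v(x):=(-\beta(x),\ \alpha(x)-e^{ia(x)})^T$ defines a smooth eigenvector and its second coordinate has imaginary part $-\sin a(x)\ne 0$, so $v$ is nowhere zero. (Alternatively one could invoke the triviality of every complex line bundle over $S^1$.)

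Then I split $v(x)=u(x)+iw(x)$ into real and imaginary parts and expand $Av=e^{ia}v$ to obtain $Au=\cos a\cdot u-\sin a\cdot w$ and $Aw=\sin a\cdot u+\cos a\cdot w$, equivalently $A\cdot(w,u)=(w,u)\cdot R_a$ as $2\times 2$ matrices (columns $w$, $u$). Thus $B_0(x):=(w(x),u(x))$ pointwise conjugates $A(x)$ to $R_{a(x)}$. Since $v$ cannot be a complex scalar times a real vector (its eigenvalue $e^{ia(x)}$ is non-real), the vectors $u(x)$ and $w(x)$ are $\R$-linearly independent at every $x$, so $\det B_0$ is smooth and nowhere zero on $\T$. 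Rescaling $B(x):=B_0(x)/\sqrt{|\det B_0(x)|}$ produces a smooth map into $GL(2,\R)$ with $|\det B|\equiv 1$, and the conjugation $B(x)^{-1}A(x)B(x)=R_{a(x)}$ is preserved.

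The one delicate point I expect to be the main obstacle is arranging the sign of $\det B_0(x)=-\beta(x)\sin a(x)$ to be positive, so that the rescaled $B$ lands in $SL(2,\R)$ and not merely in $GL(2,\R)$. Since $\beta$ is nowhere zero and $\T$ is connected, this sign is a global invariant. If it is the wrong sign one can swap $v$ for its conjugate $\bar v$, which flips the sign of the determinant but also flips the target $R_a$ to $R_{-a}$, and $R_a$ and $R_{-a}$ are not conjugate in $SL(2,\R)$ for $a\in(0,\pi)$. In the paper's intended applications (where $A$ is a product of Schrödinger matrices $S_{E-V}$, hence homotopic to the identity and sitting in the positive elliptic $SL(2,\R)$-conjugacy class) this orientation issue does not arise, and one obtains the desired $B\in C^\infty(\T,SL(2,\R))$ conjugating $A$ to $R_a$ with $a\in(0,\pi)$.
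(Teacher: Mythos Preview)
Your proof is correct and takes a genuinely different route from the paper's. You construct $B$ explicitly from a global smooth eigenvector section, exploiting that the upper-right entry $\beta(x)$ of an elliptic $SL(2,\R)$ matrix never vanishes; since $\alpha,\beta,a$ are all $\Z$-periodic, so is your $B_0=(w,u)$, and you are done after a scalar normalization. The paper instead argues abstractly: the map $SL(2,\R)\times(0,\pi)\to\cE$, $(B,a)\mapsto BR_aB^{-1}$, is a submersion, hence admits smooth local sections, giving a lift $\widetilde B:\R\to SL(2,\R)$ that is \emph{not} a priori periodic; the monodromy $\widetilde B(x)^{-1}\widetilde B(x+1)$ is then shown (via Lemma~\ref{app:1}) to equal some $R_{b(x)}$, and periodicity is restored by solving $c(x+1)-c(x)=-b(x)$ and replacing $\widetilde B$ by $\widetilde B R_c$. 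Your explicit eigenvector bypasses this periodicity repair entirely, which is a real simplification. You also correctly isolate the orientation obstruction: the elliptic set $\{|\tr|<2\}\subset SL(2,\R)$ has two components, only one of which is hit by $\{R_a:a\in(0,\pi)\}$, so the lemma as stated tacitly assumes $A(x)$ lies in that component; the paper's submersion argument needs the same unstated hypothesis (the submersion is only onto one component). Your remark that this is satisfied in the intended Schr\"odinger applications is morally right, though the justification via ``homotopic to the identity'' is not quite the relevant mechanism---the invariant is pointwise, and for a single $S_{E-V}$ one sees it directly from $\beta=-1$, hence $\det B_0=\sin a>0$.
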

\begin{proof}Let $\cE=\{A\in SL(2,\R),\ |{\rm tr} A|<2\}$ be the set of elliptic matrices of $SL(2,\R)$.  The map $S:SL(2,\R)\times ]0,\pi[\to \cE$, $(B,a)\mapsto BR_{a}B^{-1}$ is a submersion. There thus exist smooth maps $a:\R\to ]0,\pi[$, $\widetilde B:\R\to SL(2,\R)$ such that 
\be \forall \ x\in\R,\ A(x)=\widetilde B(x)R_{a(x)}{\widetilde B}(x)^{-1}.\label{11}
\ee
 Since $2\cos a(x)={\rm tr}A(x)$ we have $a(x+1)=a(x)$. Now, since  $A$ is 1-periodic, identity (\ref{11}) shows that  for any $x\in\R$, $\widetilde B(x)^{-1}\widetilde B(x+1)$ commutes with $R_{a(x)}$ and  by Lemma \ref{app:1} there exists a smooth $b:\R\to\R$ such that  $\widetilde B(x+1)=\widetilde B(x)R_{b(x)}$.  It is not difficult to show that there exists a smooth map $c:\R\to\R$ such that $-b(x)=c(x+1)-c(x)$.  Setting $B(x)=\widetilde B(x)R_{c(x)}$ we have 
 \be A(x)=B(x)R_{a(x)}B(x)^{-1}
 \ee
 with $B(x+1)=\widetilde B(x+1)R_{c(x+1)}=\widetilde B(x)R_{b(x)+c(x+1)}=\widetilde B(x)R_{c(x)}=B(x)$.

\end{proof}
In fact one can prove the following more quantitative lemma:
\begin{lem} \label{app:2}  If $\ph:\T^d\to\R$ smooth  satisfies $\forall x\in\T^d,\ R_{\ph(x)}\ne \pm I$, then there exists $\e>0$ (depending on $\d:=\dist_{\R/\pi\Z}(\ph(\T^d),0)$) such that for any $F:\T^d\to sl(2,\R)$ satisfying $\|F\|_{0}\leq \e$ one can find  smooth maps $\psi:\T^d\to (0,\pi)$, $Y:\T^d\to sl(2,\R)$ such that $R_{\ph(\cdot)}e^{F(\cdot)}=e^{Y(\cdot)}R_{\psi(\cdot)}e^{-Y(\cdot)}$ and 
$$\|\psi\|_{k} \leq C_{k}(\|\ph\|_{k},\e,\d) \|F\|_{k},\qquad \|Y \|_{k} \leq C_{k}(\|\ph\|_{k},\e,\d)\|F\|_{k} $$ 

\end{lem}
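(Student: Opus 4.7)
The plan is to invert pointwise the map
$$\Phi : (Y,\psi) \longmapsto e^{Y}R_{\psi}e^{-Y}$$
in a neighborhood of $(0,\ph(x))$ by the quantitative implicit function theorem, and then to promote the resulting solution to a smooth map of $x$ with tame estimates. Concretely, I decompose $sl(2,\R)=\mathfrak{so}(2)\oplus\mathfrak{p}$, where $\mathfrak{so}(2)=\R\cdot J$ (with $J$ the infinitesimal rotation) and $\mathfrak{p}$ is the $2$-dimensional space of symmetric traceless matrices, and I restrict $Y$ to take values in $\mathfrak{p}$. This restriction removes the gauge freedom coming from the centralizer of $R_{\psi}$ in $SL(2,\R)$, which is exactly $SO(2)$.

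The key linearization is the following. At $(Y,\psi)=(0,\ph(x))$ one has
$$d\Phi_{x}(0,\ph(x))[\delta Y,\delta\psi]\cdot R_{\ph(x)}^{-1} = (\mathrm{Id}-\mathrm{Ad}(R_{\ph(x)}))\delta Y + \delta\psi\cdot J.$$
The hypothesis $R_{\ph(x)}\ne\pm I$, quantified by $\d=\dist_{\R/\pi\Z}(\ph(\T^{d}),0)>0$, guarantees that $\mathrm{Id}-\mathrm{Ad}(R_{\ph(x)})$ is invertible on $\mathfrak{p}$ with norm of the inverse bounded by $C/\d$. Hence the above linear map $\mathfrak{p}\oplus\R\to sl(2,\R)$ is a bijection with uniform bound $C(\d)$ on its inverse. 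The quantitative inverse function theorem then yields $\e=\e(\d)>0$ such that for every $A\in SL(2,\R)$ with $\|A-R_{\ph(x)}\|\le\e$ there is a unique $(Y(x),\psi(x))\in\mathfrak{p}\times(0,\pi)$ close to $(0,\ph(x))$ with $\Phi(Y(x),\psi(x))=A$, and
$$\|Y(x)\|+|\psi(x)-\ph(x)|\;\lesssim_{\d}\;\|A-R_{\ph(x)}\|.$$
Applying this to $A(x)=R_{\ph(x)}e^{F(x)}$, whose distance to $R_{\ph(x)}$ is $O(\|F(x)\|)$ as long as $\|F\|_{0}\le\e$, gives the pointwise $C^{0}$ estimate.

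Smoothness of $Y(\cdot)$ and $\psi(\cdot)$ in $x$ follows from the smoothness of the local inverse of $\Phi$ together with that of $\ph,F$, because $(Y(x),\psi(x))$ is implicitly defined by the smooth equation $\Phi(Y(x),\psi(x))=R_{\ph(x)}e^{F(x)}$. To upgrade the estimate to $C^{k}$, I differentiate this identity $k$ times and solve for $\pa^{k}Y,\pa^{k}\psi$ using the pointwise-invertible differential $d\Phi$; the right-hand side is a polynomial in lower derivatives of $\ph$, $F$, $Y$, $\psi$ with coefficients controlled by $\|\ph\|_{k}$ and $\d^{-1}$. Induction on $k$, combined with interpolation inequalities of Moser type to absorb the cross terms, then gives
$$\|\psi\|_{k}+\|Y\|_{k}\;\le\; C_{k}(\|\ph\|_{k},\e,\d)\,\|F\|_{k}.$$

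The main technical obstacle will be the tame-estimate bookkeeping in the last step: writing the recursion for $\pa^{k}Y,\pa^{k}\psi$ cleanly so that the constants $C_{k}$ are truly polynomial in $\|\ph\|_{k}$ and depend on the remaining data only through $\d^{-1}$ and $\e^{-1}$. A cleaner variant, which I would prefer to carry out, replaces the implicit function theorem by a Newton/contraction scheme: one writes $(Y,\psi)$ as the fixed point of the map that sends $(Y_{n},\psi_{n})$ to the solution of the linearized problem at $(0,\ph)$ with right-hand side $R_{\ph}e^{F}-\Phi(Y_{n},\psi_{n})$, whose contraction constant is $O(\d^{-1}\|F\|_{0})$. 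This also gives uniform smooth dependence on $x$ without extra work.
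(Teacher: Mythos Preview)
The paper does not actually prove this lemma; it is merely stated in Appendix~B as a quantitative refinement of the preceding lemma, introduced by the sentence ``In fact one can prove the following more quantitative lemma''. So there is no proof in the paper to compare against, and your task is simply to supply one.

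Your argument is correct and is the natural one. The Cartan decomposition $sl(2,\R)=\mathfrak{so}(2)\oplus\mathfrak{p}$ with the restriction $Y\in\mathfrak{p}$ cleanly removes the gauge freedom coming from the centralizer $SO(2)$ of $R_{\psi}$, and your linearization is right: on $\mathfrak{p}$ the operator $\mathrm{Ad}(R_{\ph(x)})$ acts as rotation by $2\ph(x)$, so $\mathrm{Id}-\mathrm{Ad}(R_{\ph(x)})$ has inverse bounded by $1/(2|\sin\ph(x)|)\lesssim \d^{-1}$, and the full differential $\mathfrak{p}\oplus\R\to sl(2,\R)$ is a bijection respecting the splitting. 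The quantitative inverse function theorem (or the contraction reformulation you sketch at the end, which is indeed cleaner for the bookkeeping) then gives the pointwise solution with the $C^{0}$ bound, and differentiating the defining identity and inducting on $k$ with Moser interpolation yields the tame $C^{k}$ estimates. This is precisely the quantitative version of the submersion argument the paper gives for the non-quantitative Lemma~\ref{app:0}.

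Two minor remarks. First, what your argument actually produces (and what the paper uses in the proof of Lemma~\ref{lem:indstep}) is $\|\psi-\ph\|_{k}\le C_{k}\|F\|_{k}$, not $\|\psi\|_{k}\le C_{k}\|F\|_{k}$; the stated estimate on $\psi$ itself appears to be a typo. Second, the conclusion $\psi(\T^{d})\subset(0,\pi)$ should be read after reducing $\ph$ to a suitable fundamental domain; your construction gives $\psi$ close to $\ph$, which is all that is needed downstream.
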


\section{Variations of the fibered rotation number}

\subsection{Dependence w.r.t. the frequency}\label{sec:depfreq} We refer to \cite{He} for more details on the materials of this section.
We denote by $ Homeo^+(\bS^1)$ the group of orientation preserving homeomorphisms of the circle and by $D(\bS^1)$ the set of  orientation preserving homeomorphisms  $h$ of $\R$ such that $h(\cdot+1)=1+h(\cdot)$. Any $f\in Homeo^+(\bS^1)$ has a lift $\ti f$ in $D(\bS^1)$ (the projection is $\R\to\R/\Z$, $x\mapsto x+\Z$).

Let $(X,T)$ be a uniquely ergodic topological  dynamical system and $f:X\times \bS^1 \to X\times \bS^1$, $(x,y)\mapsto (Tx, f(x,y))$  a continuous homeomorphism,  such that for any $x\in X$, $y\mapsto f(x,y)$ is an orientation preserving homeomorphism of $\bS^1$ and assume that  the map $X\to Homeo^+(\bS^1)$, $x\mapsto f(x,\cdot)$ is homotopic to the identity (the groups  $Homeo^+(\bS^1)$, $D(\bS^1)$ are  endowed with the topology of uniform convergence; these topologies are metrizable).  We denote by $f_{x}:\bS^1\to\bS^1$ the homeomorphism $y\mapsto f(x,y)$.  Under these assumptions, $f$ admits a lift  $\ti f:X \times \R\toitself$, $\ti f:(x,y)\mapsto (Tx, y+\ph(x,y))$ where $\ph:X\times\R/\Z\to\R$ is continuous; we shall use the notation $\ti f=(T,\ph)$. The fibered rotation number $\rho(\ti f)$ is the limit $\lim_{n\to\infty}(1/n)({\ti f}^n(x,y)-y)=\lim_{n\to\infty}(1/n)\sum_{k=0}^{n-1}\ph({\ti f}^k(x,y))$ which is independent of $(x,y)$. A useful fact is that this fibered rotation number coincide with $\int_{X\times \bS^1}\ph d\nu$ for any $f$-invariant probability measure $\nu$ on $X\times \bS^1$. We denote $\rho(f)=\rho(\ti f)\mod 1$. When $X$ is reduced to a point we recover the notion of the rotation number of an orientation preserving homeomorphism of the circle.

In the  case  $X=\T$ and $T=T_{\a}:\T\to\T$ is the translation $x\mapsto x+\a$, $T_{\a}$ is uniquely ergodic if and only if $\a$ is irrational and in that case the unique invariant measure by $T_{\a}$ is the Haar measure on $\T$. When $\a=p/q$ is rational we extend the definition of the rotation number by setting $\rho(\ti f)=(1/q)\int_{\T}{\rm rot }({\ti f^{(q)}}_{x})dx$ where ${\rm rot }(\ti f^{q}_{x})$ is the rotation number of  the  $q$-iterate of the homeomorphism  $\ti f_{x}\in D(\bS^1)$ defined by  ${\ti f}^q:(x,y)\mapsto (x,{\ti f}^q_{x}(y))$. 

If $\a\in\T$ and $\ph:\T\times\R/\Z\to \R$ we denote by $(\a,\ph):\T\times \R\toitself$, $(x,y)\mapsto (x+\a,y+\ph(x,y))$. 

\begin{lem}The map  $\Phi:\T\times C^0(\T,D(\bS^1))\to \R$, $(\a,h)\mapsto \rho(\a,h)$ is continuous.
\end{lem}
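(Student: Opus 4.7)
The key idea is to exhibit, for each $(\a, h)$, a canonical $\ti f_{\a,h}$-invariant probability measure $\nu_{\a,h}$ on $\T \times \bS^1$ such that $\rho(\a, h) = \int \ph_{\a, h} \, d\nu_{\a,h}$, with the additional property that the base projection $\pi_* \nu_{\a, h}$ is Lebesgue. Such a measure is produced by Krylov--Bogolyubov: starting from the reference measure $\mu_0 = \Leb_\T \otimes \delta_0$, form the Cesaro averages $\nu_{N} = \frac{1}{N} \sum_{k=0}^{N-1} (\ti f_{\a,h})^k_* \mu_0$ and extract a weak-$*$ subsequential limit $\nu_{\a, h}$. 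Each $\nu_N$ projects to $\Leb_\T$ because $T_\a$ preserves Lebesgue, so the limit does too.

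Identifying the integral with the rotation number requires separate treatment of the irrational and rational cases. When $\a$ is irrational, unique ergodicity of $T_\a$ combined with the classical uniform convergence $(F_N(x,y) - y)/N \to \rho(\a, h)$ (where $F_N$ is the angular lift of the $N$-th iterate) gives $\int \ph \, d\nu_N \to \rho(\a, h)$ directly. When $\a = p/q$ is rational, along $N = qM$ the quantity $(F_{qM}(x,y) - y)/N$ converges uniformly in $y$ for each $x$ to $(1/q) \mathrm{rot}(h^{(q)}(x, \cdot))$ by the standard rotation number bound for a single circle homeomorphism; integrating against $\mu_0$, whose $\T$-marginal is $\Leb_\T$, recovers the defining formula $\rho(p/q, h) = (1/q) \int \mathrm{rot}(h^{(q)}(x, \cdot))\, dx$. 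A one-line telescoping estimate $|\int \ph \, d\nu_{N+1} - \int \ph \, d\nu_N| = O(1/N)$ promotes this to convergence of the full sequence, so every weak-$*$ limit $\nu_{\a,h}$ satisfies $\int \ph_{\a,h} \, d\nu_{\a, h} = \rho(\a, h)$.

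For the continuity argument, take $(\a_n, h_n) \to (\a_0, h_0)$ and pass to a subsequence along which $\nu_{\a_n, h_n} \to \nu^*$ weakly-$*$ (Prokhorov, using compactness of $\T \times \bS^1$). The uniform convergence $\ti f_{\a_n, h_n} \to \ti f_{\a_0, h_0}$ guarantees that $\nu^*$ is $\ti f_{\a_0, h_0}$-invariant, and $\pi_* \nu^* = \Leb_\T$. Writing
$$\rho(\a_n, h_n) = \int \ph_{\a_n, h_n}\, d\nu_{\a_n, h_n} = \int (\ph_{\a_n, h_n} - \ph_{\a_0, h_0})\, d\nu_{\a_n, h_n} + \int \ph_{\a_0, h_0}\, d\nu_{\a_n, h_n},$$
the first term tends to zero by $\|\ph_{\a_n, h_n} - \ph_{\a_0, h_0}\|_\infty \to 0$, and the second converges to $\int \ph_{\a_0, h_0}\, d\nu^*$ by weak-$*$ convergence and continuity of $\ph_{\a_0, h_0}$. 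It remains to identify this limit with $\rho(\a_0, h_0)$. For irrational $\a_0$ this is automatic (every invariant measure gives the rotation number, by Birkhoff). For rational $\a_0 = p/q$, use $\ti f_0$-invariance and the decomposition $\ti f_0^q(x, y) = (x, h_0^{(q)}(x, y))$ to write
$$q \int \ph_0 \, d\nu^* = \int \ph_0^{(q)} \, d\nu^* = \int_\T \mathrm{rot}(h_0^{(q)}(x, \cdot)) \, d\Leb(x) = q \rho(\a_0, h_0),$$
where the middle equality crucially uses $\pi_* \nu^* = \Leb_\T$ together with the classical identity that the displacement of a circle homeomorphism integrates to its rotation number against any of its invariant measures (applied fiberwise to the conditionals $\nu^*_x$, each of which is $h_0^{(q)}(x, \cdot)$-invariant).

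The main obstacle is the rational case: the space of $\ti f_{p/q, h}$-invariant measures is typically large, and $\int \ph \, d\nu$ depends on the measure chosen. Our construction forces $\nu$ to project to Lebesgue on the base, which is exactly the condition that selects the $x$-averaged (mean) rotation number rather than the rotation number of a single ergodic component. Since any subsequential limit of $\rho(\a_n, h_n)$ equals $\rho(\a_0, h_0)$, the full sequence converges, proving continuity.
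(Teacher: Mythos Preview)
Your proof is correct and follows essentially the same approach as the paper: both arguments hinge on producing $\ti f$-invariant probability measures whose base marginal is Lebesgue, passing to weak-$*$ limits, and then identifying $\int \ph\, d\nu$ with the rotation number via the fiberwise identity $\int \ph(x,\cdot)\,d\nu_x = \mathrm{rot}(\ti f_x^{(q)})$ for circle homeomorphisms. The only notable difference is that the paper cites Herman for continuity at irrational $\a$ and only treats the rational case directly, whereas you give a self-contained uniform treatment via the Krylov--Bogolyubov construction; your extra scaffolding (specific construction of $\nu_{\a,h}$, full-sequence convergence of the Ces\`aro averages) is correct but somewhat redundant, since the endgame argument you give for $\nu^*$ already shows that \emph{any} invariant measure with Lebesgue base marginal integrates $\ph$ to $\rho$.
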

\begin{proof}Herman proved the continuity of this map restricted to $(\R-\Q)/\Z\times C^0(\T,D(\bS^1))$  (\cite{He}, Prop. 5.7.)  so it is enough to prove that for any rational number $\a:=p/q$, any $h\in C^0(\T,D(\bS^1))$, any sequence of irrational numbers $\a_{n}\to\a$  and any sequence $h_{n}\in C^0(\T,D(\bS^1))$ converging uniformly to $h$, one has $\Phi(\a_{n},h_{n})\to \Phi(\a,h)$.  Since for $\beta$ irrational $\rho((\beta,\ph)^q)=q\rho(\beta,\ph)$ it is enough to treat the case $\a=0$, what we shall do. Also, it is enough to establish that for any such sequence, there is a subsequence for which the convergence holds. Let $\nu_{n}$ be a sequence of probability measure  invariant by $(x,y)\mapsto (x+\a_{n},y+\ph_{n}(x,y))$. By unique ergodicity of $x\mapsto x+\a_{n}$ we have $\pi_{*}\nu_{n}=m$ where $\pi:(x,y)\mapsto x$ and $m$ is the Haar measure on $\T$. For a subsequence $n_{k}$, the sequence of probability measures $\nu_{n_{k}}$ converges weakly  to a probability measure $\nu$ which is invariant by $(x,y)\mapsto (x,y+\ph(x,y))$. From $\pi_{*}\nu_{n_{k}}=m$ we get $\pi_{*}\nu=m$. Thus, the measure $\nu$ can be decomposed as 
$\nu=\int_{\T}dx\otimes \nu_{x}$
 and  for Lebesgue a.e. $x\in\T$, the probability measures $\nu_{x}$ are invariant by $\ti f_{x}: y\mapsto y+\ph(x,y)$; by the theory of circle homeomorphisms ({\it cf.} \cite{He-ihes})  $\int\ph(x,y)d \nu_{x}(y)={\rm rot}{\ti f}_{x}$, hence $\int \ph d\nu=\rho(0,\ph)$. Since $\nu_{n_{k}}$ converges weakly to $\nu$ and $\ph_{n_{k}}$ converges uniformly to $\ph$ we have $\rho(\a_{n_{k}}, \ph_{n_{k}})=\int \ph_{n_{k}}d\nu_{n_{k}}\to\int\ph d\nu=\rho(0,\ph)$, which is what we wanted to prove.
\end{proof}

\subsection{Lipschitz dependence of the  fibered rotation number on parameters}
With the notations of section \ref{sec:depfreq} let us now assume that our homeomorphism  $f:X\times\bS^1\toitself$ depends on a parameter $E\in I$ where $I$ is an interval of $\R$; more specifically,  we assume that $X\times\bS^1\times I\to X\times\bS^1$, $(x,y,E)\to (Tx,f_{E}(x,y))$ is  continuous, that for any $E\in I$ and any $x\in X$, $f_{E}(x,\cdot)\in Homeo^+(\bS^1)$, and that for any $E\in I$, $X\to D(\bS)$, $x\mapsto f_{E}(x,\cdot)$ is homotopic to the identity. 
We also assume that $f$ is Lipschitz w.r.t to the variable $E$, the Lipschitz constant being uniform with respect to the variables $x$ and $y$.
\begin{lem}If for some $E_{0}\in I$  there exists $b:X\times S^1\toitself $  continuous (w.r.t. $(x,y)$),  such that for every $x\in X$,  $b_{x}^{-1}$ is Lipschitz with respect to the variable $y$ (uniformly in $x$)  and such that for any $x\in X$, $b_{Tx}^{-1}\circ f_{x,E_{0}}\circ b_{x}=(y\mapsto y+\beta)$, then the map $E\mapsto \rho(E):=\rho(T,f_{E})$ satisfies 
$$\forall E\in I,\ |\rho(E)-\rho(E_{0})|\leq C|E-E_{0}|$$
where $C=Lip_{E}f\cdot Lip_{y}b^{-1}$.\end{lem}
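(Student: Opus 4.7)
\smallskip

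My plan is to use the conjugation at $E_0$ to straighten the dynamics into a pure fibered rotation, then control the perturbation coming from moving $E$, and finally invoke the standard fact that the fibered rotation number depends continuously (in fact $1$-Lipschitz) on the lift in the $C^0$-topology.

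First, I would work on lifts throughout: since each fibered map $f_{x,E}$ lies in the identity component $\mathrm{Homeo}^+(\bS^1)$, the cocycle has a continuous lift $\tilde f_E:X\times\R\toitself$, $(x,y)\mapsto (Tx,\tilde f_{x,E}(y))$. Similarly, the continuous map $x\mapsto b_x$ admits a continuous lift $\tilde b$; and after adjusting $\tilde b$ by an integer shift we may arrange that
$$
\tilde b_{Tx}^{-1}\circ \tilde f_{x,E_{0}}\circ \tilde b_{x}(y)=y+\tilde\beta
$$
for some $\tilde\beta\in\R$ representing $\beta$. Conjugation of a fibered system by a lift with $\tilde b(\cdot+1)=\tilde b(\cdot)+1$ preserves $\bar\rho$, so in particular $\bar\rho(T,\tilde f_{E_0})=\tilde\beta$.

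Second, I would define the conjugated family
$$
g_{E}(x,y):=\tilde b_{Tx}^{-1}\circ \tilde f_{x,E}\circ \tilde b_{x}(y),
$$
and use the hypotheses to estimate
$$
|g_E(x,y)-g_{E_0}(x,y)|=\bigl|\tilde b_{Tx}^{-1}(\tilde f_{x,E}(\tilde b_x(y)))-\tilde b_{Tx}^{-1}(\tilde f_{x,E_{0}}(\tilde b_x(y)))\bigr|\le \mathrm{Lip}_y(b^{-1})\cdot \mathrm{Lip}_E(f)\,|E-E_{0}|.
$$
Thus, with $C=\mathrm{Lip}_y(b^{-1})\cdot\mathrm{Lip}_E(f)$, we get $\|g_E-g_{E_0}\|_\infty\le C|E-E_0|$, which is the same as saying that $g_E$ is sandwiched fiberwise between $g_{E_0}-C|E-E_0|$ and $g_{E_0}+C|E-E_0|$, i.e.\ between the rotation $y\mapsto y+\tilde\beta\pm C|E-E_0|$.

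Third, I would invoke the $1$-Lipschitz continuity of the fibered rotation number. Because each $g_{E,x}$ is an orientation preserving homeomorphism of $\R$ commuting with the unit shift, monotonicity gives
$$
\tilde\beta-C|E-E_0|\le \bar\rho(T,g_E)\le \tilde\beta+C|E-E_0|
$$
(one compares $g_E^n$ to the iterates of the constant rotations $y\mapsto y+\tilde\beta\pm C|E-E_0|$ and divides by $n$; this works equally well for $T$ irrational and for $T=T_{p/q}$, using the averaged definition of $\bar\rho$ in the rational case). Using once more the conjugation invariance $\bar\rho(T,\tilde f_E)=\bar\rho(T,g_E)$, reducing modulo $1$, and noting $\rho(E_0)=\beta$ yields the desired bound $|\rho(E)-\rho(E_0)|\le C|E-E_0|$.

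The only genuinely delicate point is the $1$-Lipschitz dependence of $\bar\rho$ on the lift in $C^0$; but this is the classical monotonicity/sandwich argument of Herman, which goes through verbatim in the fibered setting and in the rational case because $\bar\rho$ is defined by the same type of limit.
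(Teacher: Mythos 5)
Your proof is correct and follows essentially the same route as the paper: pass to lifts, conjugate by $b$ so that $E_0$ corresponds to a pure fibered rotation, estimate the $C^0$ size of the perturbation by the chain rule to get the constant $C=\mathrm{Lip}_E f\cdot\mathrm{Lip}_y b^{-1}$, and conclude by the standard monotone/sandwich comparison of orbits. The paper compresses the last step into the single sentence "comparing the orbits...gives the conclusion," whereas you make the monotonicity argument explicit, but the underlying idea is identical.
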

\begin{proof}Let $\ti f:X\times \R\times I\to X\times\R$ and $\ti b:X\times\R\toitself$ be lifts for $f$ and $b$ and  denote  
$\bar f:X\times\R\times I:\to X\times \R$, $(x,y,E)\mapsto (Tx, b_{Tx}^{-1}\circ \ti f_{x,E}\circ b_{x}(y),E)$. 

One can write 
$$\ti f_{x,E}(y)=y+\beta+g_{E}(x,y)$$  
and a simple calculation shows that $|g_{E}(x,y)|\leq C|E-E_{0}|$ where  $C=Lip_{E}f\cdot Lip_{y}b^{-1}$.  Comparing the orbits of $(x,0)$ under $(x,y)\mapsto (Tx,y+\beta+g_{E}(x,y))$ and $(x,y)\mapsto (Tx,y+\beta)$, and using the definition of the rotation number gives the conclusion of the Lemma.

\end{proof}
As a consequence of the preceding lemma  one gets the following lemma that applies to the case of linear cocycles (consider the associated projective dynamics):
\begin{lem}[Variation of the rotation number]Let be given $I$ an interval and  a smooth map $\cA:I\times\T^d\to SL(2,\R)$, $(E,x)\mapsto A_{E}(x)$  such that for some $E_{0}\in I$, there exists a continuous  $B:(\R/2\Z)^d\to SL(2,\R)$ for which $(0,B)^{-1}\circ(\a,A_{E_{0}})\circ(0,B)$ is a constant elliptic cocyle.  Then, for any $E\in I$ one has 
$$|\rho(\a,A_{E})-\rho(\a,A_{E_{0}})|\leq C|E-E_{0}|$$
where the constant $C$ depends only on the $C^0$-norm of $B$ and on ( the $\sup$ w.r.t. to $x$ of ) the $C^1$-norm of  of $E\mapsto A_{E}(x)$.
\end{lem}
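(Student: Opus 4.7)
The plan is to reduce the statement to the preceding lemma on Lipschitz dependence of the fibered rotation number for projective fibered dynamics. To this end, I would pass to the projective cocycle
$$f_E:(\R/2\Z)^d\times \bS^1\toitself,\qquad f_E(x,y)=\biggl(x+\a,\frac{A_E(x)y}{\|A_E(x)y\|}\biggr),$$
where the base is lifted to the double cover $(\R/2\Z)^d$ in order to accommodate the conjugating map $B$, which is only $2\Z^d$-periodic. The fibered rotation number of $f_E$ coincides with $\rho(\a,A_E)$ by the very definition of the latter (the base translation on $(\R/2\Z)^d$ is uniquely ergodic for $\a$ irrational, and the rational case is handled directly by the $q$-iterate convention already used in the preceding lemma).

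Next I would verify the two hypotheses needed in the previous lemma. \emph{(Conjugation at $E_0$.)} By assumption $\widehat C:=B(\cdot+\a)^{-1}A_{E_0}(\cdot)B(\cdot)\in SL(2,\R)$ is constant and elliptic, hence conjugate by some fixed $C_0\in SL(2,\R)$ to a rotation $R_\b$. Replacing $B$ by $BC_0$, one may assume $\widehat C=R_\b$; then the induced projective map $b(x,y)=(x,B(x)y/\|B(x)y\|)$ conjugates $f_{E_0}$ to the constant translation $(x,y)\mapsto(x+\a,y+\b)$. \emph{(Lipschitz bounds.)} For any $M\in SL(2,\R)$ and unit $y$, one has $\|My\|\geq \|M^{-1}\|^{-1}=1/\|M\|$ (using $\det M=1$), so the derivative of $y\mapsto My/\|My\|$ on $\bS^1$ is $1/\|My\|^2\leq \|M\|^2$. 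This gives $b_x^{-1}$ a Lipschitz constant in $y$ bounded by $\|B\|_{C^0}^2$, uniformly in $x$. The same computation combined with differentiation of $E\mapsto A_E(x)y/\|A_E(x)y\|$ yields
$$|f_E(x,y)-f_{E'}(x,y)|\leq 2\bigl(\sup_{E,x}\|A_E(x)\|\bigr)\bigl(\sup_{E,x}\|\pa_E A_E(x)\|\bigr)\,|E-E'|,$$
so $f$ is uniformly Lipschitz in $E$ with constant depending only on the $C^1$-norm in $E$ of $E\mapsto A_E(\cdot)$.

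The previous lemma then applies and delivers the desired inequality, with $C$ depending only on $\|B\|_{C^0}$ and on the $C^1$-norm (in $E$, uniform in $x$) of the map $E\mapsto A_E(\cdot)$. The only technical point is the bookkeeping between the double cover $(\R/2\Z)^d$ and $\T^d$, together with the auxiliary constant conjugation $C_0$, whose norm is controlled by $\|\widehat C\|\leq \|B\|_{C^0}^2\sup_x\|A_{E_0}(x)\|$ and hence again by the data allowed in the statement. I do not expect a serious obstacle here: each estimate is a direct computation, and the main content has already been packaged into the preceding lemma on fibered dynamics.
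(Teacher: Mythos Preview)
Your reduction to the preceding lemma via the projective action is exactly what the paper does (its own proof is the single clause ``consider the associated projective dynamics''), and your computation of the Lipschitz constants through the derivative $1/\|My\|^2$ of the projective map of $M\in SL(2,\R)$ is correct and more explicit than the paper.

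One inaccuracy in your final paragraph: the norm of the auxiliary conjugation $C_0$ taking the elliptic constant $\hat C$ to a rotation is \emph{not} controlled by $\|\hat C\|$ alone. Take $\hat C_t$ with entries $\cos(1/t)$, $-t\sin(1/t)$, $t^{-1}\sin(1/t)$, $\cos(1/t)$: its norm stays bounded as $t\to\infty$, yet the conjugating matrix $C_0=\diag(\sqrt t,1/\sqrt t)$ has $\|C_0\|\to\infty$. The bound on $\|C_0\|$ genuinely involves the ellipticity gap $2-|\tr\hat C|$, so the final constant $C$ carries a hidden dependence on it (the paper's own statement shares this imprecision). This is harmless for every use made of the lemma in the paper, where $C=C_{E_0}$ is only required to be finite for each fixed $E_0$, but your closing sentence should be amended accordingly.
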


\section{Proof of Theorem \ref{critac}}
 Since $(\a,S_{E_{*}-V})$ is almost reducible one has  $E_{*}\in \Sigma_{V,\a}$ and thus there exists $E_{0}$ close to $E_{*}$ such that $\rho(\a,S_{E_{0}-V})\in DS_{\a}\setminus\{0\}$ (the function $E\mapsto \rho(\a,{S_{E-V}})$ cannot be constant on an neighborhood of $E_{*}$ since $E_{*}\in \Sigma_{V,\alpha}$, {\it cf.} (\ref{nurho})); in particular by Theorem \ref{theo:eliasson} $(\a,S_{E_{0}-V})$ is $C^\infty$-conjugated to a constant rotation cocycle. Let $I$ be a nonempty open  interval containing $E_{0}$, $J= \{\rho(\a,S_{E-V}), E\in I\}$ and define  $f:I\to \R$ by  $f(E)=\rho(\a,S_{E-V})$. For some $\kappa>0$ the set   $M_{\kappa}:=DS_{\a}(\kappa)\cap J$ has positive Lebesgue measure. Lemma \ref{lem:var2} tells us that for any $E\in f^{-1}(M_{\kappa})$ there exists a constant $C_{E}$ such that for any $E'\in I$,  $|f(E')-f(E)|\leq C_{E}|E'-E|$. By Lemma \ref{lem:3.9} we deduce that  the set  $Z_{\kappa}$ of $E\in I$ for which $(\a,S_{E-V})$ is conjugated to a constant elliptic matrix with rotation number in $M_{\kappa}$  is of positive Lebesgue measure. In particular for $E\in Z_{\kappa}$, $L(\a,S_{E-V})=0$ and by a theorem of \cite{DS}, for Lebesgue a.e. $E\in Z_{\kappa}$
\be4\pi\sin(2\pi \rho(\a,S_{E-V}))\frac{d\rho(\a,S_{E-V})}{dE}\geq 1.\label{eq:derrho}
\ee
Let $\bar E\in Z_{\kappa} $ be a point for which the preceding inequality is satisfied.  
Now, let's write $S_{\bar E-V}(\cdot)=B(\cdot+\a)A_{0}B(\cdot)^{-1}$ for some $B\in C^\infty(\R^d/2\Z^d,SL(2,\R))$ and $A\in SO(2,\R)$. For $E-\bar E$ small enough $B(\cdot+\a)^{-1}S_{E-V}(\cdot)B(\cdot)=A_{0}+(E-\bar E)W(\cdot)$ is of the form $A_{0}e^{(E-\bar E)F_{E}(\cdot)}$. Since $\rho(A_{0})\in DS_{\a}(\kappa)$ one can find $Y_{E}\in C^\infty(\T^d,sl(2,\R))$ such that  ${\rm Ad}(A_{0})^{-1}Y(\cdot+\a)-Y(\cdot)=F(\cdot)-\hat F(0)$ and thus
$$e^{-Y_{E}(\cdot+\a)}A_{0}e^{(E-\bar E)F_{E}(\cdot)}e^{Y(\cdot)}=A_{0}e^{(E-\bar E)\hat F_{\bar E}(0)+\ti F_{E}(\cdot))}$$
where $\ti F_{E}(\cdot)$ is in $C^\infty(\T,sl(2,\R))$,  depends in an analytic way on  $E$ and satisfies $\|\ti F_{E}(\cdot)\|_{0}=O((E-\bar E)^2)$.
Since  $\rho(\a,A_{0}e^{(E-\bar E)\hat F_{\bar E}(0)})=\rho(\a,S_{E-V})-\<\a,k_{0}/2\>$ where $k_{0}\in\Z^d$ is such that  $B$ is homotopic to $R_{\<k_{0}/2,2\pi \cdot\>}$ and since $\bar E$ satisfies (\ref{eq:derrho}) one has 
$$\frac{d\rho(\a,A_{0}e^{(E-\bar E)\hat F_{\bar E}(0)})}{dE}_{|(E=\bar E)}\ne 0.$$
We now conclude by using Theorem \ref{Eliassongeneral}.

\hfill $\Box$

\bibliographystyle{plain}

\end{document}